\def\Z{\mathbb{Z}}
\def\ZZ{\Z \oplus \Z}
\def\R{\mathbb{R}}
\def\C{\mathbb{C}}
\def\SS{\mathbb{S}}
\def\KB{\mathbb{K}}
\def\PP{\mathbb{P}}
\def\I {\mathbb{I}}
\def\TT {\mathbb{T}}
\def\BB {\mathbb{B}}
\def\P{\pi_{1}}
\def\a{\alpha}
\def\b{\beta}
\def\g{\gamma}
\def\f{\phi}
\def\vf{\varphi}
\def\s{\sigma}
\def\w{\omega}
\def\G{\Gamma}
\def\lra {\longrightarrow}
\def\cal{\mathcal}
\def\tore {\cal{T}}
\def\sphere {\cal{S}}
\def\W {\Omega}
\def\ol{\overline}
\def\tl{\widetilde}
\def\M {\mathcal{M}}
\def\N {\mathcal{N}}
\def\F {\mathcal{F}}
\def\B {\mathcal{B}}
\def\C {\mathcal{C}}
\def\D {\mathcal{D}}
\def\A {\mathcal{A}}
\def\piece {\mathcal{Q}}
\def\V {\mathcal{V}}
\def\WW {\mathcal{W}}
\def\X {\mathbf{X}}
\def\Mg {\mathbf{M}}
\def\Ng {\mathbf{N}}
\def\pp {\mathbf{p}}
\def\p {\mathrm{p}}
\def\e {\mathrm{e}}
\def\v {\mathrm{v}}
\def\t {\mathrm{t}}
\def\x {\mathrm{x}}
\def\pK {\mathbb{Z}\rtimes \mathbb{Z}}
\newtheorem{lem}{Lemma}[section]
\newlength{\ecart}
\newlength{\largeur}
\title{The conjugacy problem in groups of non-orientable 
3-manifolds}
\begin{document}

\maketitle
\begin{center}
{\sc Jean-Philippe PR\' EAUX} 
\footnote{Laboratoire d'Analyse, Topologie et Probabilit\'es, UMR 7353, Aix-Marseille Universit\'e, 39 rue F.Joliot-Curie, F-13453
marseille
cedex 13\\
\indent {\it E-mail:} \ preaux@cmi.univ-mrs.fr\smallskip\\
\indent \begin{minipage}{12.1cm}{\sl Keywords:} \ Conjugacy problem, fundamental group,
non-orientable 3-man\-ifolds, topological decomposition of 3-manifolds, graphs of groups.
\end{minipage}%
\\
\indent{\sl AMS subject classification:} \ 57M05, 20F10.}\smallskip\\
\end{center}


\begin{abstract}
We  prove that fundamental groups of non-orientable 3-manifolds have a solvable conjugacy problem
and construct an algorithm. Together with our earlier work on the conjugacy problem in groups of 
orientable geometrisable 3-manifolds, all  $\pi_1$ of  (geometrisable) 3-manifolds have a
solvable conjugacy problem. 
As corollaries, both the twisted conjugacy problem  in closed surface groups and the
 conjugacy problem in closed
surface-by-cyclic groups, are solvable.
\end{abstract}

\pagestyle{myheadings}
\markboth{Jean-Philippe PR\'EAUX}{The conjugacy problem in $\pi_1$ of non-orientable 3-manifolds} \markright{{THE CONJUGACY PROBLEM IN $\pi_1$ OF NON-ORIENTABLE 3-MANIFOLDS}}

\section*{Introduction}
Since formulated by M.Dehn in the early 1910's, the word and conjugacy  problems for finitely presented groups
have become fundamental in combinatorial group theory. Following the work of Novikov \cite{novikov} and
further authors on their general unsolvability, it has become quite natural to ask for any finitely presented group
whether it admits a solution or not. For example in \cite{dehn1,dehn2,dehn3}, Dehn has solved those
 problems in fundamental groups of closed surfaces; his motivation was coming from their
topological study.

 Given a finite
presentation of a group $G$, a solution to the
 {\sl word problem}  is an algorithm, which given two elements
  $\w,\w'\in G$ as words in the generators and their inverses,
   decides whether $\w=\w'$ in $G$ other not. A solution to the {\sl conjugacy problem}
    is an algorithm, which given $\w,\w'\in G$, decides whether
     $\exists\ h\in G$ such that $\w'=h\w h^{-1}$ in $G$ other not.
It turns out that the existence of a solution for $G$ does not depend on the finite presentation involved. 
We say that $G$ has a {\sl
solvable} word (resp. conjugacy)  problem  if  $G$ admits a solution to the word  (resp. conjugacy) problem.
\smallskip\\ \indent
By a 3-manifold we mean a connected compact manifold of dimension 3 with boundary; a 3-manifold may be orientable or
not. We work in the PL category; by the hauptvermutung and Moise's Theorem this is not restrictive.
According to the work of Thurston (cf. \cite{th}) an oriented 3-manifold $\M$ is {\sl geometrisable} if all the pieces obtained
in its canonical topological decomposition
 (roughly speaking along essential spheres, discs and tori) have an interior
that admits a complete locally homogeneous Riemannian metric.
 In the following we will say (abusively) that a  non-orientable 3-manifold is {\sl geometrisable} whenever the total space of its orientation cover 
is geometrisable; it is worth noting that this definition is weaker than the usual one conjectured for all  non-orientable 3-manifold\footnote{For short that  the pieces obtained in their topological decomposition are two-fold covered by punctured, orientable, complete locally homogeneous Riemannian 3-manifolds whose cover involutions are isometries. See \cite{scott} pages 484--485 for details.}. It's a deep result that all 3-manifolds turn out to be geometrisable;  the work
of Perelman  in the early 2000's, together with clarifications by 
several authors (cf. \cite{boileau}) proves this statement. 
The reader who might not feel comfortable with Perelman's proof may consider this assumption
 as an hypothesis on the non-orientable 3-manifolds involved.

In fundamental groups of geometrisable 3-manifolds, the word problem is  known to be 
solvable
since the work of Epstein and Thurs\-ton  on automatic group theory
(cf. \cite{epstein}).
We have proved in \cite{cp3mg} that  all groups of orientable geometrisable 3-manifolds have a solvable 
conjugacy problem; we will make a heavy use of these last two results in our proof.
 We focus on non-orientable 3-manifolds and we construct an algorithm that solves the conjugacy problem
  for their fundamental groups. 
  Therefore, all 3-manifolds have a fundamental group with solvable conjugacy problem, which 
  contrasts with higher dimensions. 
  We also state as corollaries that the conjugacy problem in surface-by-cyclic groups 
  and that the twisted conjugacy problem in surface groups are both solvable (cf. \S 1).

The solution for groups of non-orientable 3-manifolds does not arise from a solution in the oriented case, since D. Collins and
C. Miller have shown that the conjugacy problem can be unsolvable in a group even when 
solvable in an index 2 subgroup
(\cite{collins}).
 Nevertheless
our strategy will consist essentially in reducing as far as possible to the conjugacy problem in the orientation covering space.

We briefly emphasize two points that sound noteworthy to us. 
On the one hand the core of the algorithm makes use of basic solutions to the word and conjugacy 
problems in groups of orientable 3-manifolds, themselves reducing to basic solutions,  built from 
biautomatic group theory, in groups of the basic pieces, Seifert fiber spaces and finite volume 
hyperbolic manifolds, and does not require any naive enumerative algorithm. Therefore, one may expect to 
deduce an efficient algorithm. On the other hand our general strategy that reduces the problem to the orientation covering space may sometimes succeed in resolving the conjugacy problem in a group $G$ 
 containing an index 2 subgroup $H$ with solvable conjugacy problem; see Theorem F.

\section{Statement of the results}
 This work is mainly devoted to prove the following result:\medskip\\
\noindent{\bf Theorem A.\ [Main result.]}\; {\it The conjugacy problem is solvable in fundamental groups of non-orientable geometrisable 3-manifolds.}\medskip\\
 Together with a solution in the
oriented case (cf. \cite{cp3mg}), one obtains:\medskip\\
\noindent{\bf Theorem B.}\; {\it The conjugacy problem is solvable in fundamental groups of geometrisable 3-manifolds.
Topologically rephrased, given any pair of loops $\gamma$, $\gamma'$ in a geometrisable 
3-manifold, one can decide whether they are freely homotopic.}\medskip

Note that Theorem A does not follow as a corollary of the oriented case; indeed the usual 
technique that
consists in translating the problem to the oriented cover fails to yield a solution, 
since the conjugacy problem
can be unsolvable in a group even when solvable in an index 2 subgroup (cf. \cite{collins}).
%
%
It is worth noting that we do not only show that a solution exists but rather give a constructive process to 
build an
algorithm; moreover when applied to conjugate elements $u$ and $v$ the algorithm implicitly
 produces a conjugating element $h$, {\sl i.e.} such that $u=hvh^{-1}$.\smallskip

 Theorem B has several consequences. 
 First the generalized word and conjugacy problems relative to boundary 
 subgroups are both solvable. More precisely:
 \medskip\\
\noindent{\bf Theorem C.}\; {\it Let $\M$ be a  geometrisable 3-manifold and $\F\subset\partial \M$ a compact
connected surface. Denote by $G=\pi_1(\M)$ and $H=i_*(\pi_1(\F))$; there exists algorithms
 that  decide for any $g\in G$ respectively
whether $g\in H$ and whether $g$ is conjugate to an element of $H$.
Topologically rephrased given any loop $\g$ (resp. any $*$-based loop for some $*\in \F$) in $\M$ 
one can decide whether up to homotopy (resp. $*$-fixed homotopy) $\g$ lies in $\F$.\medskip
}

\noindent{\it Proof that Theorem B $\implies$ Theorem C.}
 Double the
3-manifold $\M$ along the identity on $\F$ to obtain  the 3-manifold $\M\sqcup_{\F}\M$. 
The proof of lemma 1.2 of \cite{cp3mg}, as well as the
observation that the orientation cover of $\M\sqcup_{\F}\M$ is the double of the orientation cover
 of $\M$ along the lift(s) of $\F$
show that $\M\sqcup_{\F}\M$ is geometrisable. Its group splits into an amalgam $\G=G*_H G$ of 
two copies of $G=\P(\M)$ along the identity of $H=i_*(\P(\F))$.
 Given $g$ lying in the $G$-left factor we denote by $\bar{g}$ the 
corresponding element in the $G$-right
factor. Since the gluing map is the identity, for any  $h\in H,\, h=\bar{h}$, and one obtains by applying elementary
 facts upon amalgams (cf. Corollary 4.4.2 and Theorem
4.6 in \cite{mks})
 that $g,\bar{g}$ are equal (resp. conjugate) in $\G$ if and only if $g\in H$ (resp. $g$
is conjugate in $G$ to some $h\in
H$). Hence with a solution to the word problem (resp. conjugacy problem) in $\G$
provided by Theorem B, it suffices to decide whether $g=\bar{g}$ (respectively $g$ and $\bar{g}$ 
are conjugate).\hfill $\square$\\

A second consequence concerns the conjugacy problem in surface by cyclic groups. It has been
 proved in \cite{freebycyclic} that (f.g.
free)-by-cyclic groups have a solvable conjugacy problem. As a complementary
 result we can deduce
from Theorem B together with the Dehn-Nielsen Theorem the same statement concerning 
(closed surface)-by-cyclic groups,
so that any (compact surface)-by-cyclic group turns out to have a solvable conjugacy problem.\\

\noindent{\bf Theorem D.}\; {\it The conjugacy problem is solvable in closed surface-by-cyclic groups.}\smallskip

\noindent {\it Proof that Theorem B $\implies$ Theorem D.} Let $\F$ be a closed surface, $K=\pi_1(\F)$, and let $G$ be an
extension of $K$ by a cyclic group $C$. In case $C$ is finite, $G$ is biautomatic (follows from  \cite[Definition 2.5.4 and Example 2.3.6]{epstein}) and hence has a 
solvable conjugacy problem (cf. \cite{gs} or \cite[Theorem 2.5.7]{epstein}). In case $C$ is infinite, the extension splits as
 $G=K\rtimes_\f\Z$ for some
$\f\in Aut(K)$. The Dehn-Nielsen Theorem (cf. \cite[Theorem 3.4.6]{dehn-nielsen}) shows that $\f$ is 
induced by an homeomorphism $f$ of the surface $\F$ so that $G$ is isomorphic to
 the fundamental group of the bundle over
$\SS^1$ with fiber $\F$ and sewing map $f$. It follows from the Thurston geometrisation Theorem 
(\cite{th}) that such a bundle is geometrisable and finally Theorem B shows that $G$ has a solvable 
conjugacy problem.\hfill$\square$\\

A third consequence concerns the twisted conjugacy problem in surface groups. 
Given a group $G$ and an automorphism
$\f$ of $G$ the twisted conjugacy problem is solvable in $(G,\f)$ if one can algorithmically decide 
given any
$u,v\in G$ whether there exists $g\in G$ such that $\f(g)ug^{-1}=v$. The twisted conjugacy problem is said to be solvable in $G$
when solvable in $(G,\f)$ for any automorphism $\f\in Aut(G)$.
\\

\noindent{\bf Theorem E.}\; {\it The twisted conjugacy problem is solvable in closed surface groups.}\smallskip\\
Together with the case of f.g. free groups (cf. \cite{freebycyclic}) the same result holds for fundamental groups of compact surfaces.\smallskip\\
{\it Proof that Theorem D $\implies$ Theorem E.} Let $\F$ be a closed surface,
$K=\pi_1(\F)$, $\f$ an automorphism of $K$
and $G=K\rtimes_\f\Z$ given by the presentation $=<G,t\,|\,\forall\,g\in G,\, tgt^{-1}=\f(g)>$. Given $u,v\in K$, there exists $g\in K$ such that $\f(g)ug^{-1}=v$
 if and
only $\exists\, g\in K$ such that $g\,t^{-1}u\,g^{-1}=t^{-1}v$ if and only if $t^{-1}u$ is conjugate to
$t^{-1}v$ in $G$: indeed if there exists $ht^p$, with $h\in K$ and  $n\in\Z$, that conjugates $t^{-1}u$ into $t^{-1}v$, then so 
does
 $h_n=ht^p(t^{-1}u)^n$ for any $n\in\Z$, and in particular for $h_p$, which belongs to $K$. Hence a solution to the conjugacy
 problem in $G$ yields a solution to the twisted conjugacy problem in $(K,\f)$.\hfill$\square$\\
 
 We conclude with a general result  inspired by our strategy that may help sometimes for solving the conjugacy problem in a group $G$ containing an index 2 subgroup with solvable conjugacy problem.\smallskip
 
 \noindent{\bf Theorem F.}\; {\it Let $G$ be a finitely presented group and $H$ an index 2 subgroup of $G$ given by a right coset function of $G\mod H$. Suppose that:
 \begin{itemize}
 \item[(i)] $H$ has a solvable conjugacy problem,
 \item[(ii)] for any couple of order two elements $u,v\in G$ one can decide whether $u,v$ are conjugate in $G$, and
 \item[(iii)] for any element $u\in G$ with order $>2$, one can decide for any couple of elements of $G$ lying in the centralizer $Z_G(u^2)$ whether they are conjugate in $Z_G(u^2)$.
  \end{itemize}
  Then $G$ has a solvable conjugacy problem.
 }
 
 \begin{proof}
 Since $H$ is an index 2 subgroup of $G$ having a solvable conjugacy problem, then both $H$ and $G$ have a solvable word problem. Note also that the right coset function allows to decide for any element in $G$ whether it lies in $H$ and yields a Reidemeister rewriting process for $H$ (\cite[\S 2.3]{mks}). Let $u,v\in G$ be given; one can suppose that they are both non-trivial. Check whether $u^2=1$ and $v^2=1$.  If both occur then use $(ii)$ to decide whether $u$ and $v$ are conjugate in $G$. If exactly one of $u,v$ has order 2 then they are definitely non-conjugate. So suppose in the following  that $u,v$ both have order greater than 2.
 Let $a\in G\smallsetminus H$ be arbitrary and $\bar{v}=ava^{-1}$.
 Use $(i)$ to decide whether $u^2$ is conjugate in $H$ either to $v^2$  or to $\bar{v}^2$. 
Suppose that one case occurs for otherwise $u,v$ are non-conjugate in $G$, and find $h\in H$ such that $u^2=hv^2h^{-1}$ (resp. $u^2=h\bar{v}^2h^{-1}$).  Then $v'=hvh^{-1}$ (resp. $h\bar{v}h^{-1}$) lies in $Z_G(u^2)$, and $u,v$ are conjugate in $G$ if and only if $u,v'$ are conjugate in $Z_G(u^2)$, that one can decide using $(iii)$. 
 \end{proof}

\section{Proof of Theorem A}

We now turn to the proof of the main result.  
 We start from an arbitrary non-orientable (geometrisable) 3-manifold $\M$ given by a
triangulation and we construct an algorithm that solves the conjugacy problem in $\P(\M)$.   

The process is done in four
steps.
In the first step  we reduce to the  closed irreducible case,
{\it i.e.} we prove that one recovers solutions in groups of non-orientable 3-manifolds from solutions in groups of closed irreducible 3-manifolds.
In the second step we construct the orientation cover $p:\N\lra\M$ of $\M$ and of its cover involution $\s:\N\lra\N$ as well as the topological decompositions of $\M$ and $\N$; that of $\N$ is obtained from known algorithms for the decomposition of orientable 3-manifolds derived from the Haken's theory of normal surfaces (\cite{jt,jlr}); then deforming until it becomes 'almost' $\s$-invariant yields that of $\M$.
In the third step we construct the induced splittings  of $\pi_1(\M)$ and $\pi_1(\N)$ as graphs of groups $\Mg$, $\Ng$ and the covering of graphs of groups $\pp:\Ng\lra\Mg$, which are useful to deal with elements in $\pi_1(\M)=\pi_1(\Mg)$ and $\pi_1(\N)=\pi_1(\Ng)$; we also establish all the basic algorithms needed in the final and fourth step, where is given the core of the algorithm.

\subsection{\bf Step 1: Reduction to the closed irreducible case.}
The preliminary step reduces the proof to the case of closed irreducible geometrisable 3-manifolds.

\begin{lem}\label{reduction}
The conjugacy problem in groups of non-orientable geom\-etrizable 3-ma\-nifolds reduces to that of 
closed irreducible geometrisable 3-manifolds. Moreover given a triangulation of $\M$  the reduction process is constructive.
\end{lem}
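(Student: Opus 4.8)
The plan is to peel off, one at a time, the topological features that obstruct closedness and irreducibility, translating each topological operation into an operation on $\P(\M)$ that preserves solvability of the conjugacy problem, and to verify at every stage that the construction is effective starting from the given triangulation. The combinatorial reductions I will use (for free products and for a retraction of an amalgam) are classical; the real content is the topological bookkeeping and the constructivity.

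First I would cap off every $2$-sphere boundary component of $\M$ with a $3$-ball; this leaves $\P(\M)$ and the (non)orientability unchanged and preserves geometrizability. Next, using the algorithmic Kneser--Milnor prime decomposition along essential normal spheres (effective by Haken normal surface theory, \cite{jt,jlr}), I would write $\M$ as a connected sum of prime pieces, so that $\P(\M)$ splits as a free product $\P(P_1)*\cdots*\P(P_k)$ of the groups of the prime summands, the $S^2$-bundle summands contributing free $\Z$ factors and the one-sided $\R P^2$ phenomena being absorbed as $\R P^3$ summands with group $\Z/2$. The classical solution of the conjugacy problem in a free product from solutions in the factors (Theorem~4.6 of \cite{mks}) then reduces the problem to the prime pieces. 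A prime summand is either an $S^2$-bundle over $S^1$, whose group is $\Z$ and for which conjugacy is trivial, or it is irreducible; so I am reduced to irreducible pieces, possibly with non-empty boundary.

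It remains to close up an irreducible piece $P$. If $\partial P$ is compressible I would produce, via the loop theorem (again effective by normal surface theory), an essential compressing disc and cut $P$ along it: a separating disc splits $\P(P)$ as a free product of two factors and a non-separating disc splits it as $\P(P')*\Z$, so in either case the free-product algorithm reduces to pieces of strictly smaller boundary genus, and the process terminates (handlebody pieces have free group, where conjugacy is immediate). Once $\partial P$ is incompressible and contains no sphere, I would form the double $DP=P\cup_{\partial P}P$, which is closed, irreducible (a standard consequence of $P$ being irreducible with incompressible boundary), non-orientable if $P$ is, and geometrizable by the argument of Lemma~1.2 of \cite{cp3mg} together with the observation that its orientation cover is the double of that of $P$. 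The decisive point is that the fold map $DP\to P$ (the identity on one copy, the reflection on the other) is a retraction and induces a retraction $r\colon\P(DP)\to\P(P)$ splitting the inclusion; hence for $u,v\in\P(P)$ one has $u=gvg^{-1}$ in $\P(DP)$ if and only if $u=r(g)\,v\,r(g)^{-1}$ in $\P(P)$, so that $u,v$ are conjugate in $\P(P)$ exactly when they are conjugate in $\P(DP)$. As $DP$ is closed irreducible and geometrizable, a solution there solves the problem in $\P(P)$.

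I expect the closing-up step to be the main obstacle. The combinatorial ingredients are classical and, crucially, non-circular: the retraction argument avoids any appeal to conjugacy in the factor $\P(P)$ itself, which is precisely what would otherwise make doubling circular. The genuine work lies in guaranteeing that capping, compressing and doubling all keep the pieces \emph{and} their orientation covers geometrizable, that doubling along incompressible boundary preserves irreducibility, that the $\R P^2$ and $S^2\widetilde{\times}S^1$ phenomena peculiar to the non-orientable setting are correctly absorbed by the prime decomposition, and above all that every one of these operations is carried out algorithmically from the initial triangulation. Making this last constructivity requirement uniform and explicit is where most of the care will be needed.
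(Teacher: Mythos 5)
Your proposal is correct, but it runs the reduction in the opposite order from the paper, and this costs you an extra step. The paper first caps the spherical boundary components, then immediately doubles the \emph{whole} manifold along its (possibly compressible) boundary, citing Lemma 1.1 of \cite{cp3mg} for the fact that $u,v\in\pi_1(\M)$ are conjugate in $\pi_1(\M)$ iff in $\pi_1(2\M)$ --- a fact whose cleanest proof is exactly your fold--retraction argument, which, as your own computation shows, requires \emph{no} incompressibility of the doubling surface (and $r_*\circ i_*=\mathrm{id}$ also gives the injectivity of $i_*$ for free). Only then does the paper run the Kneser--Milnor decomposition, now on a \emph{closed} manifold, so that algorithm 7.1 of \cite{jt} applies verbatim and the prime pieces are automatically closed, hence either irreducible or $\SS^2$-bundles over $\SS^1$. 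You instead decompose first, which forces you to confront bounded prime pieces with compressible boundary and to insert the loop-theorem compression cascade, precisely because doubling \emph{last} makes you want the double irreducible, and for that you need boundary incompressibility. Your cascade does work (cutting an irreducible manifold along a properly embedded disc keeps it irreducible, components acquiring sphere boundary are balls, and boundary complexity strictly decreases), but it is entirely avoidable, and it is also where your constructivity citations are weakest: \cite{jt} treats closed manifolds, so your algorithmic prime decomposition and compressing-disc detection on bounded manifolds would need the bounded-manifold versions of normal surface theory, which the paper's ordering sidesteps. In short: same two combinatorial ingredients (conjugacy in free products via Theorem 4.6 of \cite{mks}, and the doubling/retraction equivalence), but the paper's order ``close up first, then decompose'' eliminates your most delicate step, while your order buys nothing except smaller doubles.
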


\noindent{\it Proof.} Let $\M$ be a non-orientable geometrisable 3-manifold; we focus on the conjugacy problem
in $\pi_1(\M)$. The reduction process is made in two steps, 
by first reducing to closed non-orientable 3-manifolds and then to closed irreducible
3-manifolds.

Gluing a 3-ball to each spherical component of the boundary $\partial \M$ leaves
$\pi_1(\M)$ unchanged; so we suppose in the following that $\M$ has no spherical
boundary component. If $\partial \M$ is non-empty, double $\M$ along its
boundary to obtain the closed non-orientable 3-manifold that we denote by $2\M$.
Lemma 1.1 of \cite{cp3mg} asserts that the inclusion map  $i: \M\lra 2\M$
induces a monomorphism $i_*:\pi_1(\M)\lra\pi_1(2\M)$, and that
$u,v\in\pi_1(\M)$ are conjugate in $\pi_1(\M)$ if and only if $i_*(u)$ and
$i_*(v)$ are conjugate in $\pi_1(2\M)$; hence the conjugacy problem in
$\pi_1(\M)$ reduces to that in $\pi_1(2\M)$: to decide whether
$u,\,v$ are conjugate in $\pi_1(\M)$ it suffices to decide whether
$i_*(u),\,i_*(v)$ are conjugate in $\pi_1(2\M)$. 

Let's prove that the closed 3-manifold $2\M$ is 
geometrisable (without reference to Thurston-Perelman).
Notice first that if $\ol{\M}$ and $\ol{2\M}$ denote respectively the orientation 
covering space of $\M$ and of $2\M$, then $\ol{2\M}$ is the double space  $2\ol{\M}$ of $\ol{\M}$: 
indeed, the orientation cover $\ol{\M}\lra\M$ naturally extends to a 2-fold cover 
$2\ol{\M}\lra 2\M$ of their double spaces, therefore since $2\ol{\M}$ is orientable, by unicity of the orientation cover,  $2\ol{\M}$ and $\ol{2\M}$ are homeomorphic. 
Since $\M$ is
geometrisable so is $\ol{\M}$ (recall our weak version of the definition of being geometrisable for a non-orientable 3-manifold).
Now Lemma 1.2 of \cite{cp3mg} states that the double space of an orientable geometrisable 3-manifolds is also geometrisable, and hence, with the above
$\ol{2\M}$ is geometrisable; therefore the non-orientable 3-manifold ${2\M}$ is geometrisable. 

It proves that the conjugacy
problem in groups of geometrisable non-orientable 3-manifolds reduces to that for  those which are closed. Moreover the reduction process is constructive since from a triangulation of $\M$  one can effectively
produce a triangulation of $2\M$ and the monomorphism $i_*: \pi_1(\M)\lra \pi_1(2\M)$.

We now turn to the second step; so suppose furthermore that  $\M$ is
closed. A Kneser-Milnor decomposition splits $\M$ into a connected sum of the
prime closed geometrisable (possibly orientable) factors $\M_1, \M_2,\ldots
,\M_n$, inducing a split of $\pi_1(\M)$ into the free product of 
$\pi_1(\M_1)$, $\pi_1(\M_2),\ldots,\pi_1(\M_n)$. A basic fact on conjugacy in
free products (cf. \cite[Theorem 4.2, \S 4.1]{mks}) shows that the conjugacy
problem in $\pi_1(\M)$ reduces to those in
the $\pi_1(\M_i)$'s. Now either $\M_i$ is an $\SS^2$-bundle over $\SS^1$ and $\pi_1(\M_i)\simeq\Z$, or $\M_i$ is irreducible. Hence the conjugacy problem in
$\pi_1(\M)$ reduces to that in groups of closed irreducible
geometrisable 3-manifolds. When $\M$ is given by a triangulation, an algorithm
appearing in \cite{jt} for the Kneser-Milnor decomposition allows to perform the
reduction constructively.\hfill$\square$\smallskip

 In the achievement of the proof, the Lemma \ref{reduction} 
 leads us to the case of groups of closed irreducible geometrisable non-orientable 
3-manifolds. \smallskip

{\noindent\sl$\bullet$\ \begin{minipage}[t]{\largeur}
{ In the following $\M$ denotes a closed irreducible geometrisable non-orientable  3-manifold, with orientation cover 
$p:\N\longrightarrow \M$.}
\end{minipage}
}

%

\subsection{\bf Step 2: Algorithms for the topological decompositions of $\M$ and $\N$.} \label{section:topological}
We start from a closed irreducible
non-orientable geometrisable 3-manifold $\M$ given by a triangulation. We show how one algorithmically constructs the orientation cover $p:\N\lra\M$ and appropriate topological decompositions of $\M$, $\N$.
\begin{figure}[h]
\centerline{\includegraphics[scale=0.8]{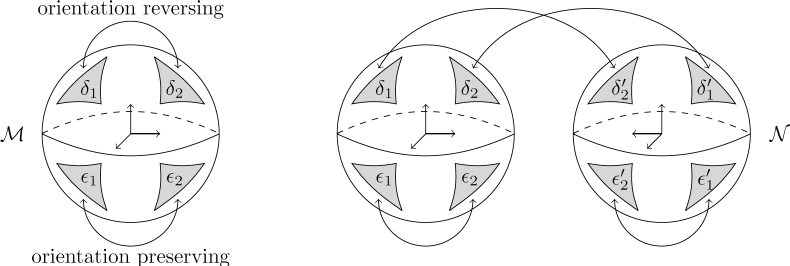}}
\caption{\small Construction of $\N$ from $\M$ given as a PL-ball with identifications on the boundary.}
\label{figcover}
\end{figure}
\begin{lem}[Algorithm $\mathfrak{Top1}$  for  the orientation cover]\label{l4}
Given a triangulation of $\M$ one can algorithmically produce a
triangulation of its orientation covering space $\N$ as well as the covering
map $p:\N\longrightarrow\M$ and the covering involution $\s:\N\longrightarrow\N$.
\end{lem}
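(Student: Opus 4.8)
The plan is to realise $\N$ by the classical combinatorial construction of the orientation double cover, carried out directly on the combinatorics of the given triangulation. Write $\Delta_1,\dots,\Delta_n$ for the $3$-simplices of $\M$ and fix once and for all an orientation $o_i$ of each $\Delta_i$, recorded as an ordering of its four vertices. Because $\M$ is a closed manifold, every $2$-simplex is a face of exactly two $3$-simplices, so for each such pair $\Delta_i,\Delta_j$ sharing a face $f$ the orientations $o_i,o_j$ each induce an orientation of $f$; I set $\epsilon_{ij}=+1$ when these induced orientations are opposite (the gluing is then orientation-\emph{compatible}, as happens everywhere in an oriented manifold) and $\epsilon_{ij}=-1$ otherwise. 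Each $\epsilon_{ij}$ is obtained by a finite comparison of vertex orderings, so the whole array is computed algorithmically.

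I would then take $2n$ simplices $\Delta_i^+,\Delta_i^-$ and prescribe their face identifications from the signs $\epsilon_{ij}$: across the face $f$ shared by $\Delta_i$ and $\Delta_j$, glue $\Delta_i^+$ to $\Delta_j^+$ and $\Delta_i^-$ to $\Delta_j^-$ when $\epsilon_{ij}=+1$, and glue $\Delta_i^+$ to $\Delta_j^-$ and $\Delta_i^-$ to $\Delta_j^+$ when $\epsilon_{ij}=-1$. This finite gluing prescription yields a triangulated space $\N$, together with the simplicial map $p:\N\lra\M$ sending $\Delta_i^{\pm}\mapsto\Delta_i$ and the free simplicial involution $\s:\N\lra\N$ interchanging $\Delta_i^+$ and $\Delta_i^-$, which satisfy $p\circ\s=p$. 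Since $p$ is two-to-one and a local homeomorphism, it carries the link of each vertex of $\N$ homeomorphically onto the corresponding link in $\M$; as $\M$ is a triangulated manifold, so is $\N$, and $p$ is a genuine degree-$2$ covering with deck group $\{\mathrm{id},\s\}$.

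It remains to confirm the two features that characterise $\N$. By the sign rule the chosen orientations $o_i$ on the $+$ copies and their reverses on the $-$ copies now agree across \emph{every} shared face, so $\N$ carries a coherent orientation and is orientable. For connectedness I would use the dual graph of $\M$ with edges weighted by the $\epsilon_{ij}$: precisely because $\M$ is non-orientable this graph carries a cycle along which the product of weights is $-1$, and lifting that cycle starting from $\Delta_i^+$ returns to $\Delta_i^-$, so the two sheets lie in one component. Hence $\N$ is connected and is the orientation double cover of $\M$; equivalently $p$ realises the kernel of the orientation homomorphism $w_1\colon\P(\M)\to\Z/2\Z$, the index-$2$ subgroup that pins $\N$ down up to isomorphism.

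Every step above is manifestly effective, which gives the algorithm $\mathfrak{Top1}$. The only genuinely delicate point I anticipate is ensuring that the glued object is a bona fide simplicial complex rather than merely a closed pseudomanifold --- two simplices of $\N$ could a priori meet along more than one face --- so I would first pass to the barycentric subdivision of $\M$, build the cover of the subdivision, and check that $p$ and $\s$ are simplicial for that structure; by contrast the orientability and connectedness assertions follow immediately from the sign bookkeeping.
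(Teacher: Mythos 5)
Your construction is correct and is essentially the paper's: both build the orientation double cover combinatorially by duplicating the triangulated pieces with opposite orientations and regluing according to whether each face identification is orientation-compatible, with $p$ the evident two-to-one projection and $\s$ the sheet-swapping involution. The paper merely organizes the same sign bookkeeping differently --- it first presents $\M$ as a pl-ball with boundary identifications (a fundamental domain realizing your per-tetrahedron orientation choices coherently, so all compatibilities interior to the ball are automatic), pre-glues the orientation-compatible pairs into a single oriented piece $\C$, and then cross-glues $\C$ with its reversed copy $\C'$ exactly along what in your notation are the $\epsilon=-1$ faces.
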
 
\begin{proof}
 The triangulation of  $\M$ can be easily given as a triangulation of a PL-ball $\B$ together with a
gluing of pairs of triangles in $\partial \B$. Pick an orientation of $\B$; it induces an orientation on each triangle in
$\partial \B$. Identify paired triangles in $\partial \B$ each time their gluing preserves orientation, to obtain a new
oriented PL-manifold $\C$ together with orientation reversing gluings  of pairs of triangle in $\partial \C$. Consider
a copy $\C'$ of $\C$ and endow $\C'$ with the reverse orientation. For each triangle $\delta$ in $\partial \C$ denote by $\delta'$ its copy
in $\partial \C'$. For each gluing of triangles $\delta_1,\delta_2$ in $\partial \C$, glue coherently in $\C\cup \C'$, $\delta_1$ with
$\delta_2'$ and $\delta_1'$ with $\delta_2$ (cf. figure \ref{figcover}). The manifold obtained is the covering space $\N$ together with a triangulation, and the construction
implicitly produces the covering map $p:\N\longrightarrow \M$ as well as the covering involution $\s: \N\lra\N$.
\end{proof}
{\noindent\sl$\bullet$\ \begin{minipage}[t]{\largeur}
{ Apply Algorithm $\mathfrak{Top1}$ to construct a triangulation of the 2-fold covering space $\N$ of $\M$ as well as the covering projection $p$ and
involution $\s$.}
\medskip
\end{minipage}
}

%
\indent The oriented manifold $\N$ may be reducible. In such a case (cf. \cite{swarup}) $\N$ contains a compact surface $\Sigma$ whose 
components are $\s$-invariant essential spheres such that:
\begin{itemize}
\item[--] $\N$ cut along $\Sigma$ decomposes  into $\s$-invariant components:
$\N_1$, $\N_2,\ldots ,\N_p$ and each manifold $\widehat{\N}_i$
obtained by filling up all $\SS^2\subset
\partial \N_i$ with  balls  is irreducible and non-simply connected;\smallskip
\item[--] let $n$ be the number of non-separating components in $\Sigma$;
$\pi_1(\N)$ decomposes as a free product of $\pi_1(\N_1)$,$\ldots\pi_1(\N_p)$ and of a free group with rank $n$: $\pi_1(\N)\simeq \pi_1(\N_1)*\cdots *\pi_1(\N_p)*F_n$;\smallskip
\end{itemize}
and $\Sigma$ has
 image in $\M$  a compact surface $\Pi=p(\Sigma)$ whose components are two-sided  projective planes
 such that:
\begin{itemize}
\item[--] $\M$ cut along $\Pi$ has components
$\M_1,\M_2,\ldots, \M_p$  where the
covering projection sends each $\N_i$ onto $\M_i$;\smallskip
\item[--] it induces a splitting of $\pi_1(\M)$ as a graph of groups whose vertex groups are $\pi_1(\M_1),\pi_1(\M_2),$
$\ldots ,\pi_1(\M_p)$  and all edge groups have order 2.\smallskip
\end{itemize}

\begin{lem}[Algorithm $\mathfrak{Top2}$ for coherent decompositions along $\SS^2$ of $\N$ and $\PP^2$ of $\M$]\label{l3}
One can algorithmically find systems of pairwise disjoint essential $\s$-invariant spheres $\Sigma$ in $\N$ and
projective planes $\Pi$ in $\M$ as above.
\end{lem}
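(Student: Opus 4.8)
The plan is to graft the classical normal-surface algorithms for essential spheres in the orientable manifold $\N$ onto the topological equivariant sphere theorem, producing first a (non-equivariant) maximal sphere system, then an $\s$-invariant one, and finally pushing it down to $\M$. I would begin by recording that the triangulation of $\N$ produced by $\mathfrak{Top1}$ is itself $\s$-invariant: by construction $\N=\C\cup\C'$ and $\s$ simply exchanges the two copies simplicially, so $\s$ is a simplicial free involution acting linearly on the space of normal coordinates (it permutes normal-disc types according to the way it permutes the tetrahedra). Using Haken normal surface theory as implemented in \cite{jt} (see also \cite{jlr}), I would then algorithmically produce a maximal system $\Sigma_0$ of pairwise disjoint essential spheres realizing a Kneser--Milnor decomposition of $\N$; this simultaneously computes the number $p$ of prime factors and the free rank $n$, and hence bounds the number of spheres required. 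The system $\Sigma_0$ need not be $\s$-invariant, and the heart of the matter is to replace it by an invariant one.

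For existence I would invoke the equivariant sphere theorem (cf. \cite{swarup}): since $\N$ is reducible and $\s$ is a free involution, there is a maximal system of pairwise disjoint essential spheres each of which is \emph{individually} $\s$-invariant. On such a sphere $\s$ restricts to a free involution, hence to the antipodal map, so the quotient is a projective plane. Note that freeness of $\s$ together with irreducibility of $\M$ forces individual invariance: a component interchanged with a distinct component would project to an essential sphere in $\M$, which is excluded; this is exactly why every edge group below will have order $2$ (namely $\pi_1(\SS^2/\s)\cong\Z/2\Z$). Moreover, since $\N$ is the orientation cover, $\s$ reverses the orientation of $\N$ while acting on each invariant $\SS^2$ with degree $-1$, so it preserves the transverse orientation and the projected $\PP^2$ is two-sided, as required.

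To make this constructive I would run an equivariant surgery/normalization: put $\Sigma_0$ transverse to $\s(\Sigma_0)$, so that their intersection is a finite family of circles, and repeatedly perform innermost-disc exchanges. Each exchange strictly decreases the number of intersection circles while keeping the two surfaces a union of essential spheres, so the process terminates; combining the resulting system with the existence statement above, and using Milnor's uniqueness of the prime decomposition to discard the spheres that have become inessential, yields an $\s$-invariant maximal system $\Sigma$. Alternatively, one may search directly among $\s$-invariant normal spheres: the $\s$-fixed normal solutions form a computable sub-cone of the normal-coordinate cone, and an equivariant normalization argument places the spheres furnished by the equivariant sphere theorem among its nonnegative integer combinations, so the maximal invariant system can be assembled effectively.

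Finally I would set $\Pi=p(\Sigma)$: each component $S\subset\Sigma$ projects to a two-sided projective plane $P=S/\s$, and the components $\N_i$ of $\N$ cut along $\Sigma$ project onto the components $\M_i$ of $\M$ cut along $\Pi$, so that reading off fundamental groups gives the free-product splitting of $\pi_1(\N)$ and, downstairs, the graph-of-groups splitting of $\pi_1(\M)$ with vertex groups $\pi_1(\M_i)$ and edge groups of order $2$; checking these splittings is routine Bass--Serre bookkeeping. The main obstacle is precisely the equivariantization: guaranteeing that the essential spheres found by the non-equivariant algorithm can be converted, effectively and with provable termination, into individually $\s$-invariant essential spheres that still constitute a \emph{complete} system. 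The bare existence is topological and is quoted from \cite{swarup}; the real work is to make the surgery (or the invariant-normal-solution search) terminate and to certify, via uniqueness of the Kneser--Milnor decomposition, that no prime factor is lost.
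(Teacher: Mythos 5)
Your overall strategy lives in the same circle of ideas as the paper (normal-surface algorithms from \cite{jt,jlr}, Tollefson-style equivariant innermost-disc surgery, existence from \cite{swarup}, termination via Kneser--Milnor), and several of your side remarks are correct and match the paper exactly: that $\s\sphere\cap\sphere=\varnothing$ for an essential sphere $\sphere$ would force an embedded sphere $p(\sphere)$ in the irreducible manifold $\M$ and hence a ball bounded by $\sphere$, and that an invariant sphere projects to a \emph{two-sided} $\PP^2$ because $\s$ reverses orientation both on $\N$ and on the sphere. But there are two genuine gaps. First, your claim that each innermost-disc exchange keeps ``the two surfaces a union of essential spheres'' is false: surgering $\sphere_0$ along an innermost disc $\D\subset\s\sphere_0$ produces $\sphere_1=\sphere_0\cup\D\setminus int(\D')$ and $\sphere_2=\D\cup\D'$, and in general one of these bounds a ball; all one can prove (as the paper does, by the ball-combination argument) is that \emph{at least one} is essential. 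Deciding \emph{which} one is essential is exactly where effectivity is at stake, and the paper resolves it by running Rubinstein's $\SS^3$-recognition algorithm \cite{rubinstein} on the capped-off complementary pieces at every surgery step. Your substitute --- ``discard the spheres that have become inessential'' via Milnor uniqueness --- is not an algorithm: uniqueness of the prime decomposition certifies nothing unless you already have an effective essentiality test, which is the point at issue. Your second, alternative route (searching the $\s$-fixed sub-cone of normal solutions) is likewise unsubstantiated: you would need an equivariant analogue of Haken's fundamental-solution theory guaranteeing that invariant essential spheres appear among computable invariant integer solutions, and no such statement is proved or cited.

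Second, and more structurally, your one-shot equivariantization of a complete system $\Sigma_0$ leaves open precisely what the paper spends most of its proof on --- and you concede this yourself in your closing sentences. After surgery and discarding, the invariant system may be incomplete, so one must find \emph{further} $\s$-invariant essential spheres disjoint from those already constructed. The paper does this recursively: cut $\N$ along the invariant sphere found, glue balls $\B_1,\B_2$, and extend $\s$ to the capped pieces --- where it is \emph{no longer free} (it has fixed points in the capping balls) --- then push each newly found invariant sphere off $\B_1,\B_2$ by a second, genuinely equivariant surgery of the form $\sphere_1=\sphere_0\cup(\D\cup\s\D)\setminus int(\D'\cup\s\D')$, again certifying essentiality by $\SS^3$-recognition and terminating by Kneser--Milnor. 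Your proposal never confronts the fixed-point extension or the ball-avoidance step, so the ``real work'' you correctly identify at the end is not merely unfinished bookkeeping: it is the content of the lemma, and as written your argument does not supply it.
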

\begin{proof}
 Apply \cite[Algorithm 7.1]{jt} (or, for a bound on complexity, an improved algorithm in \cite{jlr}),  to find, if any, an essential sphere
$\sphere_0$ in $\N$. If none exists then $\Sigma=\Pi=\varnothing$ and the process stops. Otherwise apply to $\sphere_0$ the
following classical argument (cf. \cite{tol}) to construct a $\s$-invariant essential sphere $\sphere$ in $\N$.
Since $\M$ is irreducible then $\s \sphere_0\cap \sphere_0\not=\varnothing$ for otherwise $p(\sphere_0)$ would be an essential sphere in $\M$. If $\s \sphere_0=\sphere_0$ there is nothing left to prove so suppose it does not occur.
 Deform slightly $\sphere_0$ so that $\s \sphere_0\cap \sphere_0$ consists of $n>0$ closed simple curves. Consider  such a
curve, which in addition bounds an innermost disk $\D$ in $\s \sphere_0$ ({\sl i.e.} a disk in $\s \sphere_0\setminus (\s \sphere_0\cap
\sphere_0)$), as well as a disk $\D'$ in $\sphere_0$. Consider the two spheres  $\sphere_1=\sphere_0\cup \D\setminus int(\D')$ and $\sphere_2=\D\cup \D'$ and
perform small isotopies (see figure \ref{figsmallisotopies}) so that $\sphere_i=\s \sphere_i$ or $\sphere_i\cap \s \sphere_i$ has fewer than $n$ components
($i=1,2$).
\begin{figure}[h]
\centerline{\includegraphics[scale=0.8]{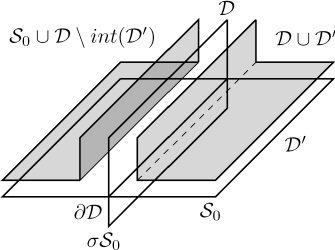}}
\caption{\small By considering small enough collar neighborhoods $N(\sphere_0)$ of $\sphere_0$ and $\s N(\sphere_0)$ of $\s\sphere_0$ in a subdivision of the triangulation of $\N$, one can deform by isotopy the spheres $\sphere_1=\sphere_0\cup\D\setminus int(\D')$ and $\sphere_2=\D\cup\D'$ such that either $\s(\sphere_i)=\sphere_i$ or they become transverse and $\sphere_i\cap\s\sphere_i$ has fewer components, $i=1,2$.}
\label{figsmallisotopies}
\end{figure}

 Moreover at least one of $\sphere_1,\sphere_2$ does not bound a ball in $\N$: for suppose on the contrary that $\sphere_1,\sphere_2$ bound respective balls $\B_1,\B_2$ then either: $(i)$ $\B_1\supset \sphere_2$ and $\sphere_0$ also bounds a ball included in $\B_1$, or $(ii)$ $\B_1\cap \B_2=\varnothing$ and there exists a collar neighborhood $N(\D)$ of $\D$ such that $\B_1\cup\B_2\cup N(\D)$ is a ball bounded by $\sphere_0$; it would contradict that $\sphere_0$ is essential. Apply the $\SS^3$ recognition algorithm (cf. \cite{rubinstein}) to each component of $\N\setminus \sphere_1$, $\N\setminus \sphere_2$ with one ball glued on the boundary, to check
which of $\sphere_1,\sphere_2$, say $\sphere_1$, does not bound a ball. Then apply the same process to $\sphere_1$ instead of $\sphere_0$, and so
on. Since the number of components of $\s \sphere_i\cap \sphere_i$ decreases it will finally stop, leading us with a $\s$-invariant
essential sphere $\sphere$ in $\N$.

Cut $\N$ along $\sphere$ and then glue balls $\B_1,\B_2$ to its boundary to obtain $\N_1$ and (possibly) $\N_2$. 
 Since $\s$  preserves $\sphere$ and
reverses the orientation both on $\N$ and $\sphere$ it necessarily preserves each component of $\N\setminus \sphere$. 
Restrict then extend
$\s$ to an involution of $\N_i$ with fixed points ($i=1,2$). Then apply the same argument as above to search for
essential $\s$-invariant spheres in $\N_1$ and $\N_2$. We furthermore need to deform each such sphere so that it lies in
$\N\setminus \sphere$, that is, so that it does intersect neither $\B_1$ nor $\B_2$.


Suppose without loss of generality that we have found an essential $\s$-invariant sphere $\sphere_0$ in $\N_1$ which
intersects $\B_1$. After slightly deforming $\sphere_0$ by isotopy, $\sphere_0\cap \partial \B_1$ consists of
$m>0$ simple closed
curves. Let $\D'$ be an innermost disk in $\sphere_0$; $\partial \D'$ bounds a disk $\D$ in $\partial \B_1$.
If $\D'$ lies in $\B_1$, consider $\sphere_1=\sphere_0\cup (\D\cup\s \D)\setminus int (\D'\cup\s \D')$ and deform it by a small isotopy  (cf. figure \ref{deform2})  so
that $\sphere_1$ becomes a $\s$-invariant essential sphere and $\sphere_1\cap\partial \B_1$ has less than $m$ components.

\begin{figure}[h]
\centerline{\includegraphics[scale=0.8]{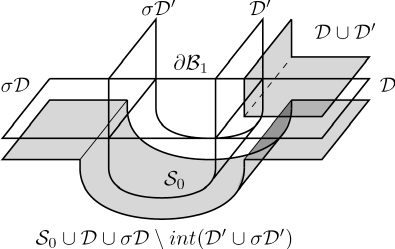}}
\caption{\small By considering small enough collar neighborhoods $N(\sphere_0)$ of $\sphere_0$ and $N(\partial \B_1)$ of $\partial\B_1$ in a subdivision of the triangulation of $\N$, one can deform by isotopy the spheres $\sphere_1=\sphere_0\cup (\D\cup\s \D)\setminus int (\D'\cup\s \D')$ and $\sphere_2=\D\cup \D'$ so that their numbers of intersection with $\partial\B_1$ decrease.}
\label{deform2}
\end{figure}

If $\D'$ does not lie in $\B_1$, consider the two spheres $\sphere_1$ as above and $\sphere_2=\D\cup \D'$ (cf. figure \ref{deform2}). At least one of $\sphere_1$, $\sphere_2$
does not bound a ball (for the same reason as above); use the $\SS^3$ recognition algorithm to check so. If $\sphere_1$ does not bound a ball, as above, after
a small isotopy, $\sphere_1$ is a $\s$-invariant essential sphere such that $\sphere_1\cap \partial \B_1$ has less than $m$
components. If $\sphere_2$ does not bound a ball, after a small isotopy, one obtains an essential sphere $\sphere_2$; but $\sphere_2$ may
 not be $\s$-invariant. Since $\M$ is irreducible one has $\sphere_2\cap \s \sphere_2\not=\varnothing$ and using the same procedure as
above one constructs a $\s$-invariant essential sphere $\sphere_3$, included in $\sphere_2\cup \s \sphere_2$ up to small isotopy, such
that $\sphere_3\cap\partial \B_1$ has less than $m$ components.

By applying this process as long as possible one finally obtains, if any, a $\s$-invariant essential sphere in $\N_1$ which
does not intersect $\B_1$ nor $\B_2$. Cut $\N_1$ along this sphere, glue balls and then apply the same process to the
manifolds obtained, while they do contain an essential sphere. According to the Kneser-Milnor Theorem it will finally
stop, leading us with the compact surface $\Sigma$ consisting of $\s$-invariant essential spheres in $\N$, as described above.
\end{proof}

{\noindent\sl$\bullet$\ \begin{minipage}[t]{\largeur}
{Apply Algorithm $\mathfrak{Top2}$ to find a surface $\Sigma$ in $\N$ made of  essential $\s$-invariant
spheres  and the surface $\Pi=p(\Sigma)$ in $\M$ made of two-sided projective planes.} 
\smallskip
\end{minipage}
}

 Cut $\N$ along $\Sigma$ and $\M$ along $\Pi$ so as to split them into pieces, respectively, $\N_1,\N_2,\ldots,\N_p$ and $\M_1,\M_2,\ldots ,\M_p$; the involution $\s$ restricts on each $\N_i$ to a free involution with quotient $\M_i$.
Consider 
$\widehat{\N}_1,\widehat{\N}_2,\ldots,\widehat{\N}_p$ obtained by filling up all spheres in $\partial \N_1, \partial \N_2,\ldots ,\partial \N_p$ with balls. 
Each involution $\s:\N_i\longrightarrow\N_i$ extends uniquely up to isotopy to an involution $\s:\widehat{\N}_i\longrightarrow \widehat{\N}_i$ with orbit space an orbifold $\widehat{\M}_i$ obtained from $\M_i$ by gluing a cone over $\PP^2$  on each projective plane in $\partial\M_i$.
\smallskip


Each manifold $\widehat{\N}_i$, $i=1 \ldots p$, is irreducible. Hence there exists a (possibly empty) 2-sided compact surface $\W_i\subset \widehat{\N}_i$ that is unique up to isotopy, and such that:
\begin{itemize}
\item[--] $\W_i$ is minimal with respect to  inclusion,
\item[--] components of $\W_i$ (if any) are essential tori, and 
\item[--] each component of $\widehat{\N}_i\setminus\W_i$ is either atoroidal or a Seifert fiber space;
\end{itemize}
 the so-called {\sl JSJ characteristic torus decomposition} (cf. \cite[Theorem 1.9]{hatcher} or \cite[Theorem 3.4]{bonahon}, see also \cite{js}). If $\W_i\not=\varnothing$, consider such a surface $\W_i$ that in addition satisfies:
\begin{itemize}
\item[--] $\W_i$ lies in $\N_i$, and\smallskip
\item[--] $p(\W_i)$ is a two-sided compact surface $\Xi_i$ in $\M_i$ whose components are essential tori and Klein bottles, and\smallskip
\item[--] whenever $\widehat{\N}_i$ is not a $\TT^2$-bundle over $\SS^1$ modeled on $Sol$ geometry: for each component $\tore$ of $\W_i$, if $\s\tore\not\in \W_i$ then $\tore$ and $\s\tore$ cobound in $\widehat{\N}_i$ a  component homeomorphic to  $\TT^2\times\I$ that is preserved under $\s$.
\end{itemize}
When $\W_i=\varnothing$, set $\Xi_i=\varnothing$. Such (possibly empty) surfaces $\W_i$ in $\N_i$ and $\Xi_i$ in $\M_i$ are what we call {\it coherent JSJ decompositions} of $\M_i,\N_i$.
%


\begin{lem}[Algorithm $\mathfrak{Top3}$ for the JSJ decompositions]\label{JSJ}
One can  algorithmically construct coherent  JSJ decompositions $\W_i\subset \N_i$ and $\Xi_i\subset \M_i$ ($i=1 \ldots p$).
\end{lem}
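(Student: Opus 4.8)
The plan is to obtain $\W_i$ by first computing the canonical torus family of the orientable irreducible manifold $\hat{\N}_i$ and then deforming it into an almost $\s$-invariant position, in the same spirit as the equivariant construction of spheres in Lemma \ref{l3}. First I would apply the algorithmic JSJ decomposition of orientable irreducible 3-manifolds (the algorithm of \cite{jt}, with complexity bounds in \cite{jlr}) to each $\hat{\N}_i$; this produces, when non-empty, a minimal canonical family of essential tori, well-defined up to isotopy. Using the recognition machinery of \cite{rubinstein} I would also decide whether $\hat{\N}_i$ is a $\TT^2$-bundle over $\SS^1$ modeled on $Sol$, which is exactly the datum selecting the applicable conclusion in the statement.

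The heart of the argument is to isotope this family into an almost $\s$-invariant position. Since the canonical family is characterized intrinsically, it is preserved up to isotopy by every self-homeomorphism of $\hat{\N}_i$; in particular $\s\W_i$ is isotopic to $\W_i$ by the uniqueness clause recalled above. To upgrade isotopy to invariance I would put $\W_i$ and $\s\W_i$ in transverse minimal position and run an innermost-disc and outermost-annulus reduction on $\W_i\cap\s\W_i$, in direct analogy with the curve-by-curve reduction of Lemma \ref{l3} (cf. \cite{tol}): an intersection circle inessential on a torus is removed by a surgery which, since $\s$ is a fixed-point-free involution on $\N_i$, can be performed together with its image $\s\D$ so as to stay $\s$-equivariant. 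After this process each component $\tore\subset\W_i$ is either $\s$-invariant, or $\tore$ and $\s\tore$ are disjoint and parallel. In the latter case, away from the $Sol$ exception, canonicity forces the region they cobound to be a product $\TT^2\times\I$ preserved by $\s$, which yields the required cobounding condition.

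Next I would push $\W_i$ off the reattached balls so that it actually lies in $\N_i$ rather than only in $\hat{\N}_i$. The intersections of $\W_i$ with a boundary sphere $\partial\B_j$ are circles bounding innermost discs on the sphere; since $\W_i$ is incompressible, surgering along these discs (again equivariantly, with $\s\D$) removes every intersection, so $\W_i\subset\N_i$. Finally, projecting via $p$ gives $\Xi_i=p(\W_i)$: an $\s$-invariant torus projects to a torus or to a two-sided Klein bottle according to whether $\s|_\tore$ preserves or reverses the orientation of $\tore$, while a pair $\tore\cup\s\tore$ projects to a single torus. In all cases the components of $\Xi_i$ are essential tori and Klein bottles, as required, and each component of $\N_i\setminus\W_i$ is a Seifert fibre space or a finite-volume hyperbolic piece.

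The main obstacle I expect is the equivariant positioning of the second step together with the genuinely exceptional $Sol$ case. The torus surgeries are more delicate than the sphere surgeries of Lemma \ref{l3} because an essential torus cannot be compressed away, so I must instead argue through the parallelism of $\tore$ and $\s\tore$ and appeal to the rigidity (up to isotopy) of the canonical family to pin down the cobounding product region; for a $Sol$ $\TT^2$-bundle the fibre torus is the unique canonical torus and the monodromy may force $\tore$ and $\s\tore$ to bound a twisted rather than a product region, which is precisely why that case is excluded from the cobounding clause and must be treated separately. Finally, making the whole reduction algorithmic rather than merely existential requires phrasing each isotopy and surgery in terms of normal surfaces, so that the finitely many moves can be realized and checked effectively.
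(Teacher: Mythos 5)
Your overall scheme (compute the JSJ family of $\hat{\N}_i$ by the algorithm of \cite{jt}, then deform it into almost $\s$-invariant position by an equivariant intersection-reduction, then push it off the filled balls and project) is the paper's scheme, but the step you correctly identify as the heart of the matter --- removing the \emph{essential} circles of $\W_i\cap\s\W_i$ --- is asserted rather than proved, and your proposed fallback does not work. An ``outermost-annulus reduction'' is only well defined if you can find an innermost annulus $\A\subset\s\tore_j$ whose \emph{two} boundary curves lie on one and the same torus $\tore\subset\W_i$; only then does the swap $\tore\mapsto\tore\cup\A\setminus int(\B)$, $\tore_j\mapsto\tore_j\cup\s\B\setminus int(\s\A)$ produce tori again and strictly decrease the intersection count. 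The paper has to rule out the bad configurations (exactly one essential intersection curve on $\s\tore_j$; or consecutive innermost annuli meeting three distinct tori $\tore,\tore',\tore''$) by a genuinely 3-dimensional argument: the complementary pieces carrying the intersection annuli are Seifert fibered, Lemma II.2.8 of \cite{js} makes the curves homotopic to regular fibers, and in each bad configuration the Seifert fibrations would extend across $V(\tore)$, so that $\W\setminus\tore$ would be a smaller canonical family, contradicting minimality of the JSJ decomposition. Your appeal to ``parallelism of $\tore$ and $\s\tore$ and rigidity of the canonical family'' cannot substitute for this: rigidity up to isotopy gives you no control on how $\s\W_i$ intersects $\W_i$ in a given transverse position, and parallelism is only available \emph{after} the intersections are gone. (Your equivariance remark for the inessential circles is also off: freeness of $\s$ does not make a single surgery equivariant; the paper instead performs the disc swap on \emph{both} tori simultaneously, modifying $\tore_1$ by $\s\D'$, $\s\D$ and $\tore_2$ by $\D$, $\D'$.)

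There is a second genuine gap at the end: when $\s\tore=\tore$ and $\s\vert_\tore$ preserves the orientation of $\tore$, the image $p(\tore)$ is a \emph{one-sided} torus in $\M_i$ (since $\s$ reverses the ambient orientation it must then exchange the two sides of $\tore$), so your conclusion ``in all cases the components of $\Xi_i$ are essential tori and Klein bottles'' fails exactly where $\Xi_i$ is required to be two-sided. The paper repairs this by replacing such a $\tore$ with one boundary component of a $\s$-invariant regular neighborhood $V(\tore)$; the two components of $\partial V(\tore)$ are then swapped by $\s$ and cobound an invariant $\TT^2\times\I$, which is what feeds the coherence clause. Relatedly, after the reduction a torus $\tore$ with $\s\tore\cap\W_i=\varnothing$ need not be parallel to $\tore$ itself: by the characteristic pair theorem $\s\tore$ is parallel to \emph{some} $\tore'\subset\W_i$, and when $\tore'\neq\tore$ one must swap $\tore'$ for $\s\tore$ in the family; only the case $\tore'=\tore$ yields the invariant $\TT^2\times\I$ or the $Sol$ torus-bundle alternative (Theorem 5.3 of \cite{scott}). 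Finally, a small misattribution: the algorithm of \cite{rubinstein} recognizes $\SS^3$ and is used to certify that spheres bound balls; detecting the $Sol$ torus-bundle case is not done with it, and in fact no such recognition step is needed inside this lemma --- the $Sol$ case simply survives as the stated exception.
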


\begin{proof}
During the whole proof, we use notations $\widehat{\N}$, $\N$, $\W$, $\M$ and $\Xi$ instead of $\widehat{\N_i}$, $\N_i$, $\W_i$, $\M_i$ and $\Xi_i$.
Apply \cite[Algorithm 8.2]{jt} to find the JSJ decomposition $\W$ as well as the characteristic Seifert submanifold of
$\widehat{\N}$. Suppose in the following that $\W\not=\varnothing$ for otherwise there is nothing left to prove.

 Apply the same argument as in the proof of the last lemma to deform the tori in $\W$ so that they all lie
in the interior of $\N$.
Deform slightly $\W$ so that $\W\cap\s \W$ is either empty or consists of simple closed curves and of whole tori
$\tore_j$'s. Then deform  $\W$ so that each closed curve component in $\W\cap\s \W$ becomes essential in $\W$. For suppose that $\s \tore_1\cap
\tore_2$ has a non-essential closed curve as component, then it necessarily contains a curve bounding an innermost disk $\D$ in $\s
\tore_1$ such that $\partial \D$ bounds a disk $\D'$ in $\tore_2$. Replace in $\W$, $\tore_1$ by $\tore_1\cup \s \D'\setminus int(\s \D)$ and
$\tore_2$ by $\tore_2\cup \D\setminus int(\D')$, so that after a small isotopy the number of components in $\W\cap \s \W$ decreases (cf. figure \ref{deform4}).
Apply this process until each closed curve in $\W\cap\s \W$ becomes essential in $\W$. 

\begin{figure}[h]
\centerline{\includegraphics[scale=0.8]{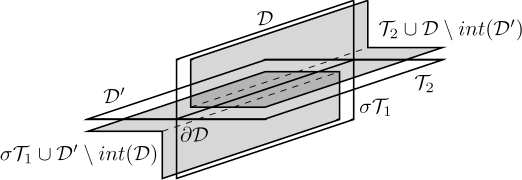}}
\caption{\small By considering small enough collar neighborhoods $N(\tore_2)$ of $\tore_2$ and $N(\s\tore_1)$ of $\s\tore_1$ in a subdivision of the triangulation of $\N$, one can deform by isotopy the tori $\tore_2\cup\D\setminus int(\D')$ and $\s\tore_1\cup\D'\setminus int(\D)$ so that their number of intersection decreases.}
\label{deform4}
\end{figure}

Let $\tore_j$ be a component of $\W$ such that $\s \tore_j\cap\W$ consists of simple closed essential curves. 
We prove first that $\s \tore_j\cap\W$ cannot consist of exactly one essential curve $\gamma$. For otherwise, let $\tore$ be a component of $\W$ such that $\s\tore_j\cap\tore=\gamma$; necessarily the torus $\tore$ is non-separating. Consider a regular neighborhood $V(\W)$ of $\W$ in $\N$ and  $\N'=\N\setminus int(V(\W))$; $\gamma$ gives rise to two disjoint essential curves $\gamma^-,\gamma^+$ in two different components of $\partial \N'$, both parallel to $\gamma$, and such that $\gamma^-,\gamma^+$ cobound an essential annulus in a component $\N''$ of $\N'$. Necessarily $\N''$ is a Seifert fiber space, with at least two boundary components. 
If $\N''\approx\TT^2\times\I$ then $\gamma^-,\gamma^+$ are regular fibers in a Seifert fibration of $\N''=\N'$, which extends to $\N$. This contradicts the minimality of $\W$.
If $\N''\not\approx \TT^2\times\I$, according to \cite[Lemma II.2.8]{js}, $\gamma^-,\gamma^+$ are homotopic to regular fibers of a fibration of $\N''$ and the same argument shows that $\W\setminus\tore$ is also a JSJ decomposition of $\N$, which once again contradicts the minimality of $\W$. This leads us to a contradiction.

Now let $\tore_j$ be a component of $\W$ such that $\s\tore_j\cap\W$ consists of at least two simple closed essential curves. The family of essential curves is pairwise disjoint so that they cut $\s\tore_j$ into annuli. Let $\gamma,\gamma'$ be two such curves cobounding an innermost annulus $\A$ ({\it i.e.} $\A$ intersects $\W$ into $\gamma,\gamma'$). Let $\tore,\tore'\subset\W$ be such that $\gamma\subset\tore$ and $\gamma'\subset\tore'$. Suppose that $\tore\not=\tore'$; consider as above $\N'=\N\setminus int(V(\W))$, it has a component $\N''$ which is a Seifert fiber space with $\gamma,\gamma'$ lying in different components of $\partial\N''$, and $\gamma,\gamma'$ cobound an annulus in $\N''$.

The case $\N''\approx\TT^2\times\I$ is discarded since $\W$ has at least two components. So with  \cite[Lemma II.2.8]{js} $\gamma,\gamma'$ are homotopic to regular fibers. Now let $\gamma''\in\s\tore_j\cap\tore''$ for some $\tore''\subset\W$, such that $\gamma,\gamma''$ cobound an innermost annulus $\B\not=\A$ in $\s\tore_j$. The same argument as above shows that $\N'$ contains $\N'''$ which is a Seifert fiber space and that if $\tore\not=\tore''$, $\gamma$ is homotopic to a regular fiber of $\N'''$. Hence  $\tore'\not=\tore\not=\tore''$ is impossible because otherwise the Seifert fibrations of $\N''$, $\N'''$ would both extend to $\N''\cup\N'''\cup int(V(\tore))$ and $\W\setminus \tore$ would be a smaller JSJ decomposition.

In summary when $\s\tore_j\cap\W$ consists of simple closed curves one can find $\tore\subset\W$ and two curves $\gamma,\gamma'\subset\tore$ cobounding in $\s\tore_j$ an innermost annulus $\A$. The curves $\gamma,\gamma'$ cobound also an annulus $\B$  in $\tore$. Modify $\W$ by changing $\tore$ into $\tore\cup\A\setminus int(\B)$ and $\tore_j$ into $\tore_j\cup\s\B\setminus int(\s \A)$, and perform a small isotopy so that the number of components in $\W\cap\s\W$ decreases (cf. figure \ref{deform3}).  Pursue this process until
$\W\cap \s\W$ has no more closed curve component. 

\begin{figure}[h]
\centerline{\includegraphics[scale=0.8]{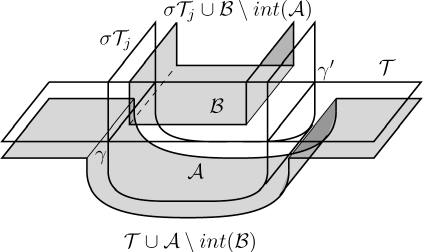}}
\caption{\small By considering small enough collar neighborhoods $N(\tore)$ of $\tore$ and $N(\s\tore_j)$ of $\s\tore_j$ in a subdivision of the triangulation of $\N$, one can deform by isotopy the tori $\tore\cup\A\setminus int(\B)$ and $\s\tore\cup\B\setminus int(\A)$ so that their number of intersection decreases.}
\label{deform3}
\end{figure}

Up to this stage  $\W$ is a JSJ decomposition of $\widehat{\N}$; its components
are 2-sided essential tori that 
  fall in two parts: those with $\s\tore\subset\W$ and those with $\s\tore\cap\W=\varnothing$.
For those $\tore$ such that $\s\tore\cap\tore=\varnothing$, $p(\tore)$ is a two-sided essential torus in $\M$.
 For those $\tore$ such that $\s\tore=\tore$ and $\s$ is orientation reversing on $\tore$, $p(\tore)$ is a two-sided Klein bottle in $\M$. For those $\tore$ such that $\s\tore=\tore$ and $\s$ is orientation preserving on $\tore$, $p(\tore)$ is a one-sided torus.
In the latter case consider a regular neighborhood $V(\tore)$ of $\tore$ in $\N$ with $\s V(\tore)=V(\tore)$  and change  $\tore$ in $\W$ by a component of $\partial V(\tore)$.

Finally for any $\tore$ such that $\s\tore\cap\W=\varnothing$: by the characteristic pair Theorem (cf. \cite {js}), $\s\tore$ is parallel 
to some $\tore'\subset\W$. If $\tore'=\tore$, then $\tore$ and $\s\tore$ cobound a $\TT^2\times\I$ preserved under $\s$ or $\N$ is a torus bundle modeled on $Sol$ geometry (cf. Theorem 5.3, \cite{scott}). If $\tore'\not=\tore$: note that $\s\tore'=\tore$ and replace in $\W$ its component $\tore'$ by $\s\tore$. 
At the end of the process $\W$ and $\Xi=p(\W)$ are coherent JSJ decompositions of $\N$ and $\M$.
\end{proof}

{\noindent\sl$\bullet$\ \begin{minipage}[t]{\largeur}
{Apply  Algorithm $\mathfrak{Top3}$ to find coherent JSJ decompositions $\W_i$ of the $\N_i$'s and $\Xi_i$ of the $\M_i$'s.}
\medskip
\end{minipage}
}

Define the surfaces $\W=\bigcup_i \W_i$ and $\Xi=\bigcup_i\Xi_i$ respectively embedded in $\N$, $\M$. Note that the involution $\s$ naturally acts on $\N\setminus\Sigma\sqcup\W\sqcup\s\W$, permuting its connected components, and consider the two $\s$-equivariant maps $p:\N\setminus (\Sigma\sqcup\W\sqcup\s\W)\longrightarrow \M\setminus(\Pi\sqcup\Xi)$ and $p:\Sigma\sqcup\W\sqcup\s\W\longrightarrow \Pi\sqcup\Xi$ obtained by restriction of $p:\N\longrightarrow \M$.

\begin{lem}[Algorithm $\mathfrak{Top4}$]\label{seifertinvariant} For each component $\piece$  of $\N\setminus\Sigma\sqcup\W\sqcup\s\W$, let $\widehat{Q}$ be obtained by gluing balls to all $\SS^2\subset\partial \piece$.
There is an algorithm which checks for each such $\piece$ whether $\widehat{\piece}$ is a Seifert fiber space and if so returns a set of Seifert invariants. 
\end{lem}

\begin{proof}
The algorithm is given in  \cite[Algorithm 8.1]{jt} and implicitly provides a set of Seifert invariants.\smallskip
\end{proof}

{\noindent\sl$\bullet$\ \begin{minipage}[t]{\largeur}
{
Apply Algorithm $\mathfrak{Top4}$ to decide which of the pieces of $\N\setminus\Sigma\sqcup\W\sqcup\s\W$ are punctured Seifert fiber spaces and  return for each a set of Seifert invariants.
}
\end{minipage}
}


\subsection{\bf Step 3: Graph of groups splittings of ${\P(\M)}$ and
${\P(\N)}$.} We now focus on how $\pi_1(\M)$ and $\pi_1(\N)$ can be given constructively by finite sets of data. This can be achieved by constructing graphs of group related to the topological decompositions of $\M$ and $\N$ with vertex and edge groups given by finite presentations.\medskip

First we need to establish  the following  algorithms that will be useful in the remaining of this part. We say that a 3-manifold $\V$ has {\sl incompressible boundary} if  for any component $\tore$ of $\partial \V$, the inclusion map $i:\tore\hookrightarrow \V$ induces a monomorphism $i_*:\pi_1(\tore)\lra \pi_1(\V)$. For a 3-manifold with incompressible boundary a {\sl peripheral subgroups system} is a collection of monomorphisms from the $\pi_1$ of components of $\partial\V$ into $\pi_1(\V)$ induced by the inclusion maps; each such monomorphism  is only well defined up to conjugacy in $\pi_1(\V)$.


\begin{lem}[Basic algorithms in the $\pi_1$ of the pieces] \label{algopieces} Let $\V$ be  a 3-manifold given by a triangulation,  $q:\WW\lra\V$ be the orientation cover,   $V=\pi_1(\V)$ and $W=\pi_1(\WW)$ seen as a subgroup of $V$ (whenever $\V$ is orientable $q:\WW\lra\V$ is an homeomorphism).

\begin{itemize}
\item[(i)] {\rm(Finite presentations).} One can algorithmically produce  finite presentations $<S|R>$ of $V$ and $<S'|R'>$ of $W$ with $S'$  a set of words on  $S\cup S^{-1}$.\smallskip
\item[(ii)] {\rm(Algorithm $\mathfrak{Gwp}(W,V)$).} Given a word $w$ on  $S\cup S^{-1}$ one can decide whether $w\in W$ and if so produce a word $w'$ on $S'\cup S'^{-1}$ which represents the same element.
\end{itemize}
In the following $\partial\V$ is  incompressible and consist of $\SS^2$, $\PP^2$, $\TT^2$, $\KB^2$. 
\begin{itemize}
\item[(iii)] {\rm (Peripheral subgroups system).} 
 One can construct a peripheral subgroups system $(V_i)_{i=1\ldots p}$ of $V$ (respectively $(W_j)_{j=1\ldots q}$ of $W$)  by canonical finite presentations with  generators words on $S\cup S^{-1}$ (respectively on $S'\cup S'^{-1}$),  represented by loops in $\partial \V$ (respectively $\partial\WW$) and such that for any $i=1\ldots p$, $V_i\cap W=W_i$.\smallskip
\item[(iv)] {\rm (Generalized word problem for boundary subgroups).} Here we moreover suppose that $\V$ is geometrisable.
Given peripheral subgroups systems $(V_i)_i$ of $V$ and $(W_j)_j$ of $W$, and $w$ a word on $S\cup S^{-1}$ (respectively on $S'\cup S'^{-1}$), one can decide whether $w\in V_i$ (respectively $w\in W_j$) and if so find a word $w'$ on generators of $V_i$ (respectively of $W_j$) which represents the same element in $V$ (respectively in $W$).
\end{itemize}
\end{lem}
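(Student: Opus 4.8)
The four assertions are handled in turn, the first three by routine constructions and the last being the substantial one. For (i), the given triangulation of $\V$ yields a finite presentation $\langle S\mid R\rangle$ of $V=\pi_1(\V)$ by the standard procedure (collapse to a spine, or read loops and relations off the dual $2$-skeleton). Apply algorithm $\mathfrak{Top1}$ of Lemma \ref{l4} to build a triangulation of the orientation cover $\WW$. The orientation character $\epsilon\colon V\to\Z/2$ is computable from the triangulation (it records whether a loop preserves the local orientation), and $W=\ker\epsilon$. Feeding $\langle S\mid R\rangle$ and $\epsilon$ into the Reidemeister--Schreier process produces a finite presentation $\langle S'\mid R'\rangle$ of the index-$2$ subgroup $W$ in which every Schreier generator in $S'$ is an explicit word on $S\cup S^{-1}$. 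For (ii), membership $w\in W$ is decided simply by evaluating $\epsilon(w)\in\Z/2$, and when $\epsilon(w)=0$ the Reidemeister--Schreier rewriting map converts $w$ into a word $w'$ on $S'\cup S'^{-1}$ representing the same element; this is algorithm $\mathfrak{Gwp}(W,V)$.

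For (iii), enumerate the boundary components of $\V$ from the triangulation; each is one of $\SS^2,\PP^2,\TT^2,\KB^2$, for which we fix the canonical presentation ($1$, $\Z/2$, $\langle a,b\mid [a,b]\rangle$, $\langle a,b\mid abab^{-1}\rangle$). Choosing a basepoint on each component and an edge-path to the basepoint of $\V$ expresses its generators as loops, hence as words on $S\cup S^{-1}$, giving the system $(V_i)_i$; incompressibility guarantees each map is a monomorphism. For $W$ we set $W_i:=V_i\cap W$: concretely $q^{-1}$ of the $i$-th boundary surface consists of one or two components of $\partial\WW$ (two tori over a torus on which $\epsilon$ vanishes, a single torus double-covering a Klein bottle, a torus over a torus on which $\epsilon$ is nontrivial), whose peripheral subgroups are exactly $V_i\cap W$. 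Since $[V_i:W_i]\le 2$, generators of $W_i$ on $S'\cup S'^{-1}$ are obtained by applying $\mathfrak{Gwp}(W,V)$ to those generators of $V_i$ that lie in $W$ and to products of pairs of them, which simultaneously verifies the required compatibility $V_i\cap W=W_i$.

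For (iv), the substantial point, I first reduce the statement for $V$ to the statement for $W$. Write $V=W\sqcup gW$ with $g$ a fixed element of $\epsilon$-value $1$. Given a word $w$: if $\epsilon(w)=0$ then, since $V_i\cap W=W_i$, one has $w\in V_i$ iff $w\in W_i$, the $W$-question. If $\epsilon(w)=1$, inspect the generators of $V_i$; if they all lie in $W$ then $V_i\subset W$ and $w\notin V_i$, while otherwise a generator $h\in V_i\setminus W$ serves as a coset representative and $w\in V_i$ iff $h^{-1}w\in W_i$, again the $W$-question. Rewriting on generators of $V_i$ follows by tracking these reductions. Thus everything rests on the generalized word problem for the peripheral subgroups $W_j$ (all trivial, or toral $\Z^2$, as $\WW$ is orientable) in the group $W$ of the \emph{orientable} geometrizable piece $\WW$.

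This last point splits according to the type of piece. When $\WW$ is a finite-volume hyperbolic manifold, $W$ is hyperbolic relative to its cusp (peripheral) subgroups; these are quasiconvex, the relative generalized word problem is solvable, and the coset coordinate can be read off. When $\WW$ is a Seifert fibered space, $W$ is biautomatic (cf. \cite{epstein}) with a normal fiber subgroup and $2$-orbifold quotient, and the peripheral tori subgroups are rational for this structure, so one builds the automaton recognizing $W_j$, decides membership, and rewrites an element as a word on the fixed toral generators by following the automaton. Combining the two cases gives (iv) for $W$, hence for $V$. I expect this algorithmic membership problem for the peripheral subgroups inside the geometric pieces to be the main obstacle; once it is available for the orientable cover, the passage back to the non-orientable base $\V$ is the elementary index-$2$ argument above.
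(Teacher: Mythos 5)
Your treatment of (i)--(iii), and your reduction of (iv) from $V$ to the index-$2$ subgroup $W$, match the paper's proof in substance: the paper likewise computes the orientation character from the gluing data of the triangulation (membership in $W$ is decided by parity of orientation-reversing generators), obtains $<S'|R'>$ by Reidemeister--Schreier followed by Tietze transformations (it packages the rewriting in an explicit pushdown automaton rather than Schreier rewriting, an inessential difference), reads canonical peripheral presentations off the triangulated boundary using Euler characteristic and orientability, and in the non-orientable case of (iv) reduces to $W$ by means of a coset representative in $V_1\setminus W$, exactly as you do. One minor remark: your case of a boundary torus on which $\epsilon$ is nontrivial never occurs --- for a boundary surface, orientation-reversal in $\V$ coincides with orientation-reversal in the surface, a point the paper makes explicitly --- but listing the vacuous case is harmless.

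The genuine gap is in the core of (iv). You split into ``$\WW$ finite-volume hyperbolic'' and ``$\WW$ Seifert fibered'', but the statement assumes only that $\V$ (hence $\WW$) is geometrizable with incompressible boundary: $\WW$ may carry a nontrivial JSJ decomposition (a graph manifold, or a manifold with mixed geometry and toral boundary), and then neither of your cases applies, so your dichotomy proves the lemma only for geometric pieces, not as stated. In addition, the facts you invoke (rationality of the peripheral tori in a biautomatic structure computable from the triangulation; an effective relative generalized word problem with readable coset coordinates) are substantial assertions that would need their own effectivity arguments. The paper sidesteps both difficulties with a uniform and more elementary device: double $\V$ along the boundary torus carrying $V_1$ to get $2\V=\V\sqcup_{\TT^2}\V'$, which is again geometrizable, so $\pi_1(2\V)=V*_{V_1}V'$ has solvable word problem; by the normal form theorem for amalgams, $v\in V_1$ if and only if $v^{-1}v'=1$ in $\pi_1(2\V)$, a single word-problem query. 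Once membership is established, the rewriting of $v$ on the generators of $V_1$ is obtained by naive enumeration of words in $V_1$ tested against the word problem in $V$ (termination is guaranteed since membership is already known; the paper notes this can be accelerated via quasiconvexity, or via Proposition 4.2 of \cite{cp3mg} in its actual setting, but neither is needed for correctness). Replacing your geometric case split by this doubling argument closes the gap and requires nothing beyond the solvable word problem for geometrizable manifolds.
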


\begin{proof} In case $\V$ is orientable, $q:\WW\lra\V$ is an homeomorphism, and simply skip in the lines of the proof all assumptions involving $\WW$ or $W$.\smallskip

\noindent {\it Proof of {\rm (i)} and {\rm (ii)}}.
As in the  proof of Lemma \ref{l4}, construct from a triangulation of $\V$ a PL-ball $\B$ with a PL-identification $f$ of triangles in $\partial\B$ with quotient manifold homeomorphic to $\V$.
Let $D_f$, the {\sl domain of $f$}, be the union of all triangles in $\partial \B$ identified by $f$.
Choose a point $*$ in $\B$. For any PL-triangle $\delta$ in $D_f$ apply barycentric subdivisions to $\delta$ and $f(\delta)$ and then  choose a PL-loop $\delta_1$ in $\B$ from $*$ to the center of gravity $\delta_*$ of $\delta$, and a PL-loop $\delta_2$ in $\B$ from $f(\delta_*)$ to $*$;  consider the PL-loop  $l(\delta)=\delta_1\delta_2$ based in $*$ (cf. figure \ref{figgenerator}); let $\lambda S=\{l(\delta)\,,\, \delta\in D_f\}$ be the finite set of all PL-loops based in $*$ obtained in this way. Given any PL-loop $l$ in $\V$ based in $*$ there is an  algorithm which homotopically changes, with $*$ fixed, the $*$-loop $l$ into a product of  elements of $\lambda S$: simply deform slightly $l$ so that it becomes transverse with $\partial\B$ then look at the successive triangles in $D_f$ that $l$ passes through,  to write it down as a product of $*$-loops in $\lambda S$.  In particular $\lambda S$ is a set of representatives of a generating set $S$ of $V\simeq \pi_1(\V,*)$ that we fix throughout the rest of the proof.
\begin{figure}[h]
\centerline{\includegraphics[scale=0.9]{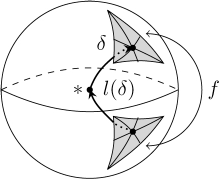}}
\caption{\small The loop $l(\delta)$ based in $*$ in $\V$ defined by a triangle $\delta$ with identification in $\partial \BB^3$; it yields a generator of $\pi_1(\V,*)$.}
\label{figgenerator}
\end{figure}

 Use the algorithm in \cite{pi_1} to compute a  finite presentation $<T|U>$ of $\pi_1(\V,*)$ from the triangulation of $\V$.
It  considers its 1-skeleton ${\rm K}$ and constructs a spanning tree ${\rm T}$ of ${\rm K}$;  for any edge $\e$ in ${\rm K}\setminus {\rm T}$ let  $l_1$ be the simple PL-loop in ${\rm T}$  from $*$ to the origin of $\e$ and $l_2$ the  simple PL-loop in ${\rm T}$ from the extremity of $\e$ to $*$ and let $l_\e=l_1\e\,l_2$  a $*$-loop which passes through $\e$. The set $T$ of generators is represented by the set of all $*$-loops $l_\e$ obtained in this way. For any PL-loop $l$ based in $*$ one    algorithmically constructs a  product of the $l_\e$, $\e\in {\rm K}\setminus {\rm T}$, homotopic to $l$ with $*$ fixed, by reading  the successive edges of ${\rm K}\setminus {\rm T}$ which appears in $l$.

Use the two processes described above to write down elements  $s\in S$ as words $T(s)$ on $T\cup T^{-1}$, and elements $t\in T$ as words $S(t)$ on $S\cup S^{-1}$ and apply the following sequence of Tietze transformations (cf. \cite{mks}) to the presentation $<T|U>$:
\begin{itemize}
\item[--] add a generator $s$ and a relation $s=T(s)$ for each $s\in S$, to  obtain $<S\cup T|U\cup U_1>$
\item[--] add relations $t=S(t)$ for all $t\in T$, to obtain $<S\cup T| U\cup U_1\cup U_2>$,
\item[--] use relations in $U_2$ to change each relation in $U\cup U_1$ and express  it on the alphabet $S$, to obtain $<S\cup T|U'\cup U'_1\cup U_2>$, 
\item[--] delete generators in $T$ and relations in $U_2$, to obtain $<S|U'\cup U'_1>$,
\end{itemize}
which finally yields a finite presentation of $V$ with generating set $S$ and proves the first assumption in (i). This presentation has a large number of generators and can be easily improved by identifying generators making use of a splitting of $D_f$ into connected surfaces on which $f$ provides  homeomorphisms.\smallskip

Among the set $S$ of generators one can decide which one reverses orientation and which one doesn't: indeed the loop $l(\delta)$ is orientation reversing if and only if $f_{|\delta}:\delta\lra f(\delta)$  reverses the orientation induced on $\delta,f(\delta)$ by that of $\B$.  Hence given a word on $S\cup S^{-1}$ one can decide if it represents an element in $W$ simply by counting whether it has an even occurrence of orientation reversing generators or not.  Consider the set $S'$ of words of one of the form: $s$, or $s's''$ or $s'ss'^{-1}$ for any $s$  orientation preserving, and any $s',s''$ orientation reversing, elements of $S\cup S^{-1}$. Each word on $S\cup S^{-1}$ having an even number of occurrence  of orientation reversing element can be easily written (in linear time) as a word on $S'\cup S'^{-1}$ for example by the deterministic pushdown automata in figure \ref{figautomata}. This shows that $S'$ generates $W$ and proves (ii).
\begin{figure}[h]
\centerline{\includegraphics[scale=0.8]{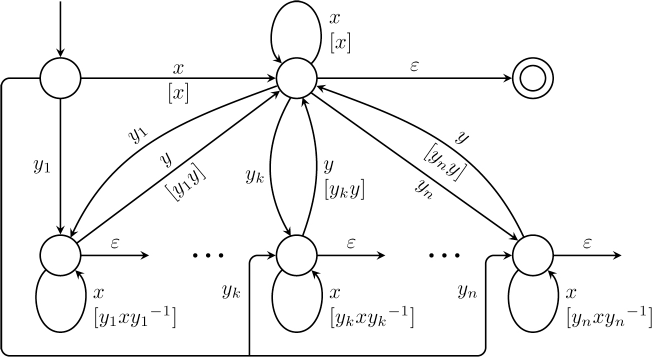}}
\caption{\small A pushdown automata that given a word on generators $S\cup S^{-1}$ of $V$ decides whether it represents an element in the index 2 subgroup $W$ and if so  returns in the stack a representative as a word on the generators $S'\cup S'^{-1}$  of $W$ as described above. Elements of $S\cup S^{-1}$ are denoted $x$ if they lie in  $W$ and $y,y_1,\ldots, y_n$ otherwise;  $\varepsilon$ is the empty string; in bracket is the element of $S'\cup S'^{-1}$ pushed on the top of the stack.}
\label{figautomata}
\end{figure}

Finally apply a process such as Reidemeister-Schreier (cf. \cite{mks,johnson}) to build a finite presentation $<T'|U'>$ of $W$, then express each generator in $S'$ as a word on $T'\cup T'^{-1}$ and each generator in $T'$ as a word on $S'\cup S'^{-1}$ and apply Tietze transformations as above to obtain a finite presentation $<S'|R'>$ with generators $S'$ for $W$. This completes the proof of (i).\smallskip\\
\noindent {\it Proof of {\rm (iii)}}.  First note that since $q:\WW\longrightarrow \V$ induces a monomorphism on fundamental groups, whenever $\V$ has  incompressible boundary, so has $\WW$. The set of PL-triangles in $\partial\B\setminus D_f$ together with $f$ provides a triangulation of $\partial\M$. Use it to compute the Euler characteristic $\chi$ and check orientability for each component of $\partial\M$; that determines their homeomorphism classes $\SS^2$, $\PP^2$, $\TT^2$ or $\KB^2$ depending on whether $\chi=2$, $\chi=1$,   $\chi=0$ and orientable, or  $\chi=0$ and non-orientable. Then  by a favorite trick (such as representing each surface in $\partial \M$ by a PL-disk with identification on its boundary edges and deforming to get one of 4 standard models, cf. \cite{seifth}) find for each  surface $\not\approx\SS^2$  in $\partial\M$ a family of $2-\chi$  PL-curves which represent generators of one of the canonical presentations $<a|a^2=1>$, $<a,b|[a,b]=1>$, or $<a,b|aba^{-1}=b^{-1}>$ of $\Z_2$, $\ZZ$, or $\Z\rtimes\Z$. Finally, use the algorithm described in the proof of (i) above to write down the generators on the alphabet $S\cup S^{-1}$. It defines a peripheral subgroups system of $V$. \smallskip

An element in a peripheral subgroup  of $V$ can be orientation preserving/reversing as an element of $V$, but also as an element of the surface group; note that since surfaces arise from  the boundary those two notions coincide. 
The boundary of $\WW$ is $q^{-1}(\partial\V)$ and its components are related to those of $\partial\V$:\smallskip\\
-- any $\SS^2,\,\PP^2\hookrightarrow \partial\V$ lifts to $\SS^2\hookrightarrow\partial \WW$. Since $\pi_1(\SS^2)=\{1\}$, the monomorphism $\pi_1(\SS^2)\lra\pi_1(\WW)$ is well defined.\smallskip\\
-- any $\TT^2\hookrightarrow \partial\V$ lifts to two components $\TT^2\hookrightarrow\partial\WW$. The monomorphism $\phi:\pi_1(\TT^2)\longrightarrow V$ has its image in $W$; 
let $\a$, $\b$ be two based loops in $\TT^2$ which represent generators $[\a]$, $[\b]$ of $\pi_1(\TT^2)$, $i:\TT^2\hookrightarrow \V$, and $a=\phi([\a])$, $b=\phi([\b])$; the loops $i(\a)$, $i(\b)$ are orientation preserving in $\V$ and lift to loops $\a^+,\b^+$ and $\a^-$, $\b^-$ lying in the two  $\TT^2\hookrightarrow\partial\WW$ where they both represent  a basis of $\pi_1(\TT^2)$. One defines the respective monomorphisms $\phi_+:\pi_1(\TT^2)\lra W$ by $\phi_+([\a^+])=a$, $\phi_+([\b^+])=b$ and $\phi_-:\pi_1(\TT^2)\lra W$, by
$\phi_-([\a^-])=vav^{-1}$, $\phi_-([\b^-])=vbv^{-1}$
 for some arbitrary element $v$ of $V\setminus W$.\smallskip\\
-- any $\KB^2\hookrightarrow\partial\V$ lifts to a $\TT^2\hookrightarrow\partial\WW$. Let  $\pi_1(\KB^2)\lra V$ be the monomorphism found above, and let $\a$, $\b$ be based loops in $\KB^2$  such that $\phi([\a])=a$ and $\phi([\b])=b$ for some generators $a,b$ of $\Z\rtimes\Z$ as in the presentation above. Let $i:\KB^2\hookrightarrow \V$; $i(\a)$, $i(\b)$ are respectively orientation reversing and orientation preserving loops in $\V$, then consider the two loops $\a_2$ and $\b_1$, respective lifts  of $i(\a)^2$ and of $i(\b)$ in $\partial\WW$; they represent generators of $\pi_1(\TT^2)$. Let the monomorphism $\phi':\pi_1(\TT^2)\lra W$ be defined by $\phi'([\a_2])=a^2$ and $\phi'([\b_1])=b$.\smallskip\\
We have finally constructed a peripheral subgroups system in $W$, which proves (iii). By construction whenever $V_i$ is a peripheral subgroup of $V$, $V_i\cap W$ is a peripheral subgroup $W_i$ of $W$.\smallskip\\
\noindent
{\it Proof of {\rm (iv)}}. Since $\V$ is geometrisable the word problem is solvable in $V$ (cf. \cite{epstein}). In particular in case of the peripheral subgroup $\{1\}$ coming from a $\SS^2$ component one can solve the generalized word problem.

Suppose first that $\V$ is orientable. Then any peripheral subgroup $V_1\not=\{1\}$ comes from a torus $\TT^2\hookrightarrow \partial\V$. Consider an homeomorphic copy $\V'$ of $\V$, and the double $2\V=\V\sqcup_{\TT^2} \V'$ of $\V$  along the boundary component $\TT^2\hookrightarrow \partial \V$.
Then  \cite[Lemma 1.2]{cp3mg} shows that $2\V$ is geometrisable, therefore $\pi_1(2\V)$ has solvable word problem. The group $\pi_1(2\V)$ splits into the amalgam $V*_{V_1} V'$ equipped with the isomorphism $v\in V\longmapsto v'\in V'$ which restricts to the identity on the subgroup $V_1\simeq\ZZ$. Let $v$ be an element of $V$ given as a word on $S\cup S^{-1}$, then by the normal form theorem for amalgams (cf. \cite{mks}), $v\in V_1$ if and only if $v^{-1}v'=1$ in $\pi_1(2\V)$, which can be checked using the solution to the word problem in $\pi_1(2\V)$.  If yes one can enumerate elements in $V_1$ as words on the generators found in (iii) and for each use a solution to the word problem in $V$ to decide whether it equals $v$, to finally write down $v$ as a word on generators of $V_1$;  this naive process can be improved using the quasi-convexity of $V_1$ in $V$ (cf. \cite{epstein}). (Note that in case $\V$ is an orientable piece coming from a JSJ decomposition of a closed irreducible 3-manifold --as occurs in our context-- a far more efficient solution is given by \cite[Proposition 4.2]{cp3mg}). In particular this solves the generalized word problem for peripheral subgroups of $W$.

Suppose now that $\V$ is non-orientable.
Let $V_1\not=\{1\}$ be a peripheral subgroup of $V$; $V_1$
is isomorphic either to $\Z_2$, $\ZZ$ or $\Z\rtimes\Z$.
 If $V_1\simeq \Z_2$  is generated by $a$ then $v$ lies in
$V_1$ if and only if $v$ commutes with $a$ (cf. \cite{swarup}) and one can decide using a solution to the word problem, and if so write $v$ as a word, $1$ or $a$,  on the generators of $V_1$ . If
$V_1\simeq\ZZ$, then $V_1\subset W$ is also, by (iii), a peripheral subgroup of $W$. One checks first using (ii) whether $v\in W$, and if so, the,  solves the problem  reduced to $W$  by the solution given above for the oriented case.
 If
$V_1\simeq \Z\rtimes\Z$; let $t\in V_1\setminus (V_1\cap W)$ be an o.r. element in $V_1$ provided by (iii).
 One  decides  whether $v$ lies in $W$ other not; then using the above solution for the oriented case,  one decides  whether,  $v$ in the former case, or $vt$ in the latter case, lies in the peripheral subgroups $V_1\cap W$ of $W$. If yes it provides a word on the generators of $V_1$ equal to $v$. In any case this solves the generalized word problem in  a peripheral subgroup of $V$ and proves (iv).\smallskip
\end{proof}


We now turn to the description of $\pi_1(\M)$ and $\pi_1(\N)$ using graphs of groups related to the topological decompositions obtained in  step 2.\smallskip

 Recall that a {\sl graph of group} $\X$ consists of (cf. \cite{serre}):
\begin{itemize}
\item[--] a non-empty finite connected oriented graph ${\rm X}$; let ${\rm VX}$, ${\rm EX}$ denote respectively the vertex and edge sets of ${\rm X}$,
for all $\e\in {\rm EX}$, $\bar{\e}$ denotes the opposite edge of $\e$, and $\t(\e)\in {\rm VX}$ denotes the extremity of $\e$; the edge
$\e$ has origin $\t(\bar{\e})$ and extremity $\t(\e)$,\smallskip 
\item[--] two families of {\sl vertex groups}  $G(\v)$ for all ${\v\in {\rm VX}}$ and {\sl edge groups} $G(\e)$ for all ${\e\in {\rm EX}}$, with
$G({\bar{\e}})=G(\e)$,\smallskip
\item[--] a family of monomorphisms $\f_\e:G(\e)\longrightarrow G({\t(\e)})$ for all ${\e\in {\rm EX}}$.\smallskip
\end{itemize}

Graphs of groups come equipped with the notion of {\sl fundamental group} of  graph of group $\X$ (cf. \cite{serre,bass}), that we introduce now.
An $\X${\sl -path} of {\sl length} $n\in\Bbb N$ is a finite sequence $(g_0,\e_1,g_1,\ldots,$ $\e_n,g_n)$ such that $\forall\,i=1 \ldots n-1$, one has $\t(\e_{i})=\t(\ol{\e}_{i+1})$, $g_i\in G(\t(\e_i))$, $g_0\in G(\t(\ol{\e}_1))$ and $g_n\in G(\t(\e_n))$.  We denote by $\pi(\X)$ the set of $\X$-paths.  
 An $\X$-path is {\sl reduced} if it does not contain  a subsequence $(\ldots,\e,\f_\e(g),\ol{\e},\ldots)$ for some $\e\in {\rm EX}$, $g\in G(\e)$; any $\X$-path can be transformed into 
a  reduced $\X$-path, called a reduction,  by changing each subsequence of the above form using relations:
\begin{equation}\tag{$*$}
(\ldots,g',\e,\f_\e(g),\ol{\e},g'',\ldots) \equiv (\ldots,g'\f_{\ol{\e}}(g)g'',\ldots)
\end{equation}
 for any $\e\in {\rm EX}$ and $g\in G(\e)$; moreover any two reductions of an $\X$-path must have the same length. 
The set $\pi(\X)$  comes equipped with a partially defined  {\sl concatenation product}:  $(\ldots,\e_1,g_1)(g_{2},\e_{2},\ldots)=(\ldots,\e_1,g_1g_2,\e_2,\ldots)$ anytime $\t(\e_1)=\t(\ol{\e}_2)$.
Let $\x\in {\rm VX}$; a {\sl $(\X,\x)$-loop} is a $\X$-path such that $\t(\ol{\e}_1)=\t(\e_n)=\x$. The concatenation product is well defined on $(\X,\x)$-loops and is compatible with relations ($*$); the equivalence classes of $(\X,\x)$-loops with respect to relation ($*$) inherits  a group structure, and we denote this group by  $\pi_1(\X,\x)$, the {\sl fundamental group}  of $\X$ based in $\x$. Each element of $\pi_1(\X,\x)$ can be represented by a reduced $(\X,\x)$-loop, which allows to define its length. The isomorphism class
of $\pi_1(\X,\x)$ does not depend on the base point $\x\in {\rm VX}$ and will be denoted by $\pi_1(\X)$.\\

Given a 3-manifold $\V$ and a two-sided compact incompressible surface $\Phi$ in $\V$ there is a usual way to define a {\sl graph of group $\mathbf{V}$ related to $(\V,\Phi)$} with $\pi_1(\mathbf{V})\simeq\pi_1(\V)$. Consider the interior $N(\Phi)$  of a regular neighborhood of $\Phi$ in $\V$. The vertices $\v_i$ (respectively the edges $\e_j$) of $\mathbf{V}$ are in 1-1 correspondence with the components $\V_i$ of $\V\setminus N(\Phi)$ (respectively with the components $\tore_j$ of $\Phi$), vertex groups (respectively edge groups) are $G(v_i)=\pi_1(\V_i)$ (respectively $G(\e_j)=\pi_1(\tore_j)$). The embedding of $\Phi$ in $\V$ defines for each $\tore_j\in\Phi$ two embeddings $f_j^+,f_j^-$ of $\tore_j$ into the boundary of some components $\V_i$, $\V_k$ of $\V\setminus N(\Phi)$ (possibly $i=k$) that induce two monomorphisms $g_j^+,g_j^-$ of their $\pi_1$. With the identifications above one defines $\phi_{\e_j}=g_j^+:G(\e_j)\lra G(\t(\e_j))$ and $\phi_{\ol{\e}_j}=g_j^-:G(\ol{\e}_j)\lra G(\t(\ol{\e}_j))$. This defines a graph of group $\mathbf{V}$ which depends on all the monomorphisms $g_j^+,g_j^-$ despite the isomorphism class of its fundamental group $\pi_1(\mathbf{V})$ does not. One proves by applying the Seifert-Van Kampen Theorem that $\pi_1(\mathbf{V})\simeq\pi_1(\V)$.\\

Given topological decompositions $\Sigma\sqcup\W$ of $\N$ and $\Pi\sqcup\Xi$ of $\M$ we consider a graph of group $\Mg$ related to $(\M,\Pi\sqcup\Xi)$ and a graph of group $\Ng$ related to $(\N,\Sigma\sqcup\W\sqcup\s\W)$ (rather than on  $(\N,\Sigma\sqcup\W)$). This last graph of group slightly differs from that related to $(\N,\Sigma\sqcup\W)$ in that it may be non-minimal ({\sl i.e.} it may contain an edge $\e$ with $\t(\e)\not=\t(\ol{\e})$ and $\vf_\e$ is onto); it's the prize for having a {\sl covering} of graphs of groups. More precisely, by {\sl coherent graph of group} decompositions for $\pi_1(\M)$ and $\pi_1(\N)$ we mean:\smallskip\\
-- a graph of groups $\Mg$ related to $(\M,\Pi\sqcup\Xi)$; $\pi_1(\Mg)\simeq\pi_1(\M)$,\smallskip\\
-- a graph of group $\Ng$ related to $(\N,\Sigma\cup\W\cup\s\W)$; $\pi_1(\Ng)\simeq\pi_1(\N)$,\smallskip\\
-- a {\sl covering} $\mathbf{p}:\Ng\longrightarrow \Mg$, that is a collection of: \label{coverpage}
\begin{itemize}
\item[--] a map of graphs  $\mathrm{p}:{\rm N}\longrightarrow {\rm M}$ from the underlying graph ${\rm N}$ of $\Ng$ to the underlying graph ${\rm M}$ of $\Mg$, induced by $p:\N\lra\M$, 
\item[--] two families of monomorphisms $\p_\v:G(\v)\lra G(\p(\v))\,,\,\v\in {\rm VN}$ and $\p_\e:G(\e)\lra G(\p(\e))\,,\,\e\in {\rm EN}$,\smallskip
\item[--] a collection of elements $\mu(\e)$, $\e\in {\rm EN}$, with  $\mu(\e)\in G(\t(\p(\e)))$ such that if $ad_\e$ is the automorphism of $G(\t(\p(\e)))$ defined by $\forall\,g\in G(\t(\p(\e))),\, ad_\e(g)=\mu(\e)\,g\,\mu(\e)^{-1}$, the following diagram commutes:
\[
\begin{CD}
G(\e) @>\phi_\e>> G(\t(\e))\\
@V\p_\e VV @VV\p_{\t(\e)}V\\
G(\p(\e)) @>>ad_\e\circ\,\phi_{\p(\e)}>  G(\t(\p(\e)))
\end{CD}
\]
\item[--] a map $\p_\#:\pi(\Ng)\lra \pi(\Mg)$ defined by:
$$
\p_\#(g_0,\e_1,g_1,\e_2,\ldots ,\e_n,g_n)=(g'_0,\p(\e_1),g'_1,\p(\e_2),\ldots,\p(\e_n),g'_n)\\
$$
where:
$$\forall\,i=0,\ldots,n,\ g'_i=\left\lbrace
\begin{array}{ll}
\p_{\t(\ol{\e}_1)}(g_0)\,\mu(\ol{\e}_1)^{-1}& \text{for}\ i=0 \\
\mu(\e_i)\,\p_{\t(\e_i)}(g_i)\,\mu(\ol{\e}_{i+1})^{-1}& \text{for}\  i\not=0,n\\
\mu(\e_n)\;\p_{\t(\e_n)}(g_n)& \text{for}\ i=n
\end{array}\right.
$$
and for any $\x\in {\rm VM}$, $\tl{\x}\in \p^{-1}(\x)$, $\p_\#$ induces a monomorphism:
$$
\p_*:\pi_1(\Ng,\tl{\x})\lra\pi_1(\Mg,\x)~.
$$
\end{itemize}
One may refer to \cite{bass} for a general definition of a covering of graphs of groups; its formalism differs from our, which turns to be more practical in the present context though less general; we won't need to relate to the definition of \cite{bass} in our purpose.

\begin{lem}[Algorithm  for graphs of groups]\label{graphsofgroups}
One can algorithmically produce coherent graphs of groups decomposition $\Ng$ and $\Mg$ for $\pi_1(\N)$ and $\pi_1(\M)$ related to the topological decompositions $(\N,\Sigma\sqcup\W\sqcup\s\W)$ and $(\M,\Pi\sqcup\Xi)$ as well as  a covering of graphs of groups $\pp:\Ng\lra\Mg$ and the induced monomorphism $\p_*:\pi_1(\Ng,\tl{\x})\longrightarrow \pi_1(\Mg,\x)$ (given any vertex $\x$ of the underlying graph $\rm M$, and $\tl{\x}\in\p^{-1}(\x)$).
\end{lem}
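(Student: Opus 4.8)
The plan is to read the combinatorial skeleton of the two graphs of groups directly off the topological decompositions of Step~2, to attach and embed all vertex and edge groups by applying Lemma~\ref{algopieces} to each piece in turn, and then to recover the ``correction'' elements $\mu(\e)$ that upgrade the graph projection into a covering $\pp$ of graphs of groups. First I would fix the underlying graphs. The components of $\M\setminus N(\Pi\sqcup\Xi)$ and of $\Pi\sqcup\Xi$ are enumerable from the triangulation of $\M$ and furnish the vertices and edges of the finite connected oriented graph ${\rm M}$; likewise the components of $\N\setminus N(\Sigma\sqcup\W\sqcup\s\W)$ and of $\Sigma\sqcup\W\sqcup\s\W$ give ${\rm N}$. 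Since $p$ and $\s$ permute these components in a way already made explicit in Step~2, tracking the images yields the map of graphs $\p:{\rm N}\lra{\rm M}$.

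Next I would attach the groups. For each vertex piece $\V$ of ${\rm M}$, Lemma~\ref{algopieces}(i)--(ii) applied to its orientation cover $q:\WW\lra\V$ produces finite presentations of $\pi_1(\V)$ and of $\pi_1(\WW)$ together with the inclusion $\pi_1(\WW)\hookrightarrow\pi_1(\V)$. When $\V$ is non-orientable, $\WW$ is the single piece of $\N$ over $\V$, the corresponding vertex $\v\in{\rm VN}$ carries $G(\v)=\pi_1(\WW)$, and the inclusion is $\p_\v$; when $\V$ is orientable the two pieces of $\N$ over $\V$ are copies of $\V$ and both $\p_\v$ are isomorphisms. The edge groups, all isomorphic to $\Z_2$, $\ZZ$ or $\Z\rtimes\Z$, and the boundary monomorphisms $\phi_\e$ come from the peripheral subgroups systems of Lemma~\ref{algopieces}(iii), and the covering edge monomorphisms $\p_\e$ are precisely the lift relations computed in its proof (a $\TT^2$ upstairs lying over a $\TT^2$ or over a $\KB^2$, an $\SS^2$ over an $\SS^2$ or a $\PP^2$).

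The delicate point, and the one where the genuine work lies, is the family $\mu(\e)$. For a fixed $\e\in{\rm EN}$ the two homomorphisms $\p_{\t(\e)}\circ\phi_\e$ and $\phi_{\p(\e)}\circ\p_\e$ from $G(\e)$ into $G(\t(\p(\e)))$ have conjugate peripheral images but need not agree on the nose: they differ by the path chosen to join the basepoint of the vertex piece to the splitting surface, once upstairs and once downstairs. I would measure the discrepancy by the loop obtained by following the projected upstairs path and returning along the downstairs path, and read it on the generators of $G(\t(\p(\e)))$ by the generator-reading algorithm of Lemma~\ref{algopieces}(i); this gives a candidate $\mu(\e)$, and the generalized word problem for boundary subgroups, Lemma~\ref{algopieces}(iv), certifies that $ad_\e$ carries $\phi_{\p(\e)}\circ\p_\e$ onto $\p_{\t(\e)}\circ\phi_\e$, i.e. that the required square commutes. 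This basepoint bookkeeping, rather than any single hard computation, is the main obstacle.

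Finally, with $\p_\v$, $\p_\e$ and $\mu(\e)$ all in hand, the map $\p_\#:\pi(\Ng)\lra\pi(\Mg)$ is given verbatim by the displayed formula in the definition of a covering, and the commuting squares make $\p_\#$ compatible with the reduction moves $(*)$, so it descends to a well-defined homomorphism $\p_*:\pi_1(\Ng,\tl{\x})\lra\pi_1(\Mg,\x)$. That $\p_*$ is a monomorphism I would deduce not combinatorially but by identifying it, under the Seifert-Van Kampen isomorphisms $\pi_1(\Ng)\simeq\pi_1(\N)$ and $\pi_1(\Mg)\simeq\pi_1(\M)$, with the inclusion $p_*:\pi_1(\N)\hookrightarrow\pi_1(\M)$ induced by the covering $p$, which is injective.
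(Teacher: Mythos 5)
Your proposal is correct in outline and reaches the stated goal, but it diverges from the paper's proof at the two pivotal points, and the comparison is instructive. For the construction, you take the upstairs edge monomorphisms as given by the topology and then \emph{solve} for the correction elements $\mu(\e)$ as basepoint-path discrepancy loops, certified via Lemma~\ref{algopieces}(iv); the paper sidesteps this bookkeeping entirely by building $\Ng$ \emph{algebraically from} $\Mg$: it partitions ${\rm VM}$, ${\rm EM}$ into orientable/non-orientable parts, chooses once and for all an element $\mu(\v)\in G(\v)\setminus H(\v^+)$ for each non-orientable vertex, and \emph{defines} the upstairs edge map in the twisted case as $ad_{\t(\e)}\circ\phi_\e$ restricted to $H(\e^-)$, so that $\mu(\e)\in\{1,\mu(\t(\e))\}$ and the commuting squares hold by fiat rather than by computation --- algorithmically cleaner, though your topological route is viable since all the pl data is explicit. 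For injectivity of $\p_*$, you invoke the identification with $p_*:\pi_1(\N)\hookrightarrow\pi_1(\M)$ under Seifert--Van Kampen; this is legitimate (the identification holds up to inner automorphism, which is harmless for injectivity), but note it is sound only because your $\mu(\e)$'s were genuinely derived from the covering --- with the paper's abstractly chosen $\mu(\v)$'s the topological identification is less immediate, which is presumably why the paper instead proves injectivity combinatorially, by a case analysis (on ${\rm EM}^\pm$, ${\rm VM}^\pm$ and the values of $\mu$) showing that $\p_\#$ sends reduced $(\Ng,\tl{\x})$-loops to reduced $(\Mg,\x)$-loops. That combinatorial argument also buys strictly more than injectivity: it gives length preservation and the reduced-to-reduced property that Lemma~\ref{reduce}(ii) and the later conjugacy algorithm rely on (lifting loops while preserving reducedness and cyclic reducedness), information your softer topological argument does not by itself provide. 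So if you pursue your route, you should either add the reduced-loop analysis anyway or rework the later steps that use it.
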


\begin{proof}
 The graph of group $\Mg$ is deduced from the topological decomposition of $\M$ along $\Pi\sqcup\Xi$ obtained by algorithms $\mathfrak{Top2}$, $\mathfrak{Top3}$ and from finite presentations of the fundamental groups of  the pieces  obtained in Lemma \ref{algopieces}.(i) together with their peripheral subgroups systems given algorithmically by Lemma \ref{algopieces}.(iii).

If ${\rm M}$ denotes the underlying graph of $\Mg$, ${\rm VM}$ is in 1-1 correspondence with the connected components of $\M\setminus (\Pi\sqcup\Xi)$ and ${\rm EM}$  is in 1-1 correspondence with the components of $\Pi\cup\Xi$. For each $\v\in {\rm VM}$, $G(\v)$ is the fundamental  group of the corresponding component of $\M\setminus (\Pi\sqcup\Xi)$, and for each
$\e\in {\rm EM}$, $G(\e)=\Z_2,\,\ZZ,$ or $\pK$  according to the associated component is homeomorphic to  $\PP^2,\TT^2$ or $\KB^2$. The monomorphisms $\phi_\e:G(\e)\lra G(\t(\e))$ are induced by the sewing maps together with the peripheral subgroups systems in all vertex groups.

Now that a graph of group $\Mg$ with $\pi_1(\Mg)\simeq \pi_1(\M)$ associated to the splitting of $\M$ along $\Pi\sqcup\Xi$  is given   we construct from $\Mg$  a related graph of group splitting $\Ng$ of $\pi_1(\N)$. This graph of group $\Ng$ is related to the topological decomposition of $\N$ along $\Sigma\sqcup \W\sqcup\s\W$: despite we only focus on the graph of group, keep in mind in the line of the proof that the construction of $\Ng$ encodes how $\N$ and  $\Sigma \sqcup \W\sqcup\s\W$ are constructed by gluing the orientation coverings of components of $\M\setminus(\Pi\sqcup \Xi)$.\smallskip 

Partition ${\rm VM}$ into 
$${\rm VM}={\rm VM}^+\sqcup {\rm VM}^-,\quad {\rm VM}^+=\left\{ \v_1,\ldots,\v_q\right\rbrace,\quad {\rm VM}^-=\{\v_{q+1},\ldots,\v_r\}$$ 
where ${\rm VM}^+$ are those vertices coming from oriented components and ${\rm VM}^-$  those coming from non-orientable components of $\M\setminus (\Pi\sqcup\Xi)$ (orientability of the pieces in $\M\setminus\Pi\sqcup\Xi$  can be algorithmically checked from their triangulations).
 For any $\v\in {\rm VM}^-$, $G(\v)\simeq \pi_1(\M_i)$ naturally arises with its index two subgroup  of orientation preserving elements described by Lemma \ref{algopieces}.(i), that we denote $H(\v^+)$; choose for any $\v\in {\rm VM}^-$ an arbitrary element $\mu(\v)$ in $G(\v)\smallsetminus H(\v^+)$, which defines an automorphism $ad_\v$ of $H(\v^+)$ by 
$\forall\,h\in H(\v^+),\,ad_\v(h)=\mu(\v)\,h\,\mu(\v)^{-1}$. Moreover the choice of $\mu(\v)$ defines a peripheral subgroups system of $H(\v^+)$, as observed in the proof of Lemma \ref{algopieces}.(iii).

Similarly partition ${\rm EM}$ into 
$${\rm EM}={\rm EM}^+\sqcup {\rm EM}^-,\quad {\rm EM}^+=\{\e_1,\ldots,\e_s\}, \quad {\rm EM}^-=\{\e_{s+1},\ldots,\e_t\}$$ 
where ${\rm EM}^+$ are edges associated to  $\TT^2$ and ${\rm EM}^-$ edges associated to $\PP^2$ or $\KB^2$; note that $\t({\rm EM}^-)\subset {\rm VM}^-$ and that $\e\in {\rm EM}^-$ if and only if $\ol{\e}\in {\rm EM}^-$.

 One constructs the graph ${\rm N}$ by picking $q+r$ vertices and $s+t$ edges:
\begin{gather*}
{\rm VN}=\{\v^+_1,\ldots,\v^+_q,\v^-_1,\ldots,\v^-_q,\v^+_{q+1},\ldots,\v^+_r\},\\
 {\rm EN}=\{\e^+_1,\ldots,\e^+_s,\e^-_1,\ldots ,\e^-_s,\e^+_{s+1},\ldots,\e^+_t\}
\end{gather*}
 and by setting:
$$
\begin{array}{ll}
\forall\,i=1 \ldots  \t,\quad \t(\e^+_i) &=\t(\e_i)^+\\
\forall\,i=1 \ldots s,\quad \t(\e^-_i) &=
\begin{cases}
\t(\e_i)^+ &\text{whenever}\ \t(\e_i)\in {\rm VM}^-\\
\t(\e_i)^- &\text{whenever}\ \t(\e_i)\in {\rm VM}^+.
\end{cases}
\end{array}
$$
Define the map of graphs $\p:{\rm N}\lra {\rm M}$ by $\p(\v^{\pm})=\v$ and $\p(\e^\pm)=\e$.

The vertex groups $(H(\v))_{\v\in {\rm VN}}$ of $\Ng$ are defined by:

$$
\begin{array}{ll}
\forall\,\v\in {\rm VM}^+,\quad &H(\v^+)=H(\v^-)=G(\v)\\
\forall\,\v\in {\rm VM}^-,\quad &H(\v^+)\lhd_2 G(\v)
\end{array} 
$$
where $H(\v^+)\lhd_2 G(\v)$ is the subgroup of orientation preserving elements as discussed above. The edge subgroups $(H(\e))_{\e\in {\rm EN}}$ of $\Ng$ are defined by:
$$
\begin{array}{ll}
\forall\,\e\in {\rm EM}^+, &H(\e^+)=H(\e^-)=G(\e)=\ZZ\\
\forall\,\e\in {\rm EM}^-, &H(\e^+) \lhd_2 G(\e)\ \text{and}\ H(\e^+)\simeq
\begin{cases}
\{1\} &\text{if}\ G(\e)=\Z_2\\
\ZZ &\text{if}\ G(\e)=\Z\rtimes\Z
\end{cases}
\end{array} 
$$

The monomorphisms $\phi_\e:H(\e)\lra H(\t(\e))$ for all $\e\in {\rm EN}$ of $\Ng$ are defined by:
\begin{itemize}
\item[(i)] Let $\e\in {\rm EM}^-$; necessarily $\t(\e)\in {\rm VM}^-$. The monomorphism $\phi_\e:G(\e)\longrightarrow G(\t(\e))$ sends the index 2 subgroup $H(\e^+)$ of $G(\e)$ into the index 2 subgroup $H(\t(\e)^+)$ of $G(\t(\e))$. Define $\phi_{\e^+}$ by the commutative diagram:
$$
\xymatrix{
H(\e^+) \ar@{^{(}->}^{2}[d] \ar[r]^{\phi_{\e^+}} & H(\t(\e)^+) \ar@{^{(}->}^{2}[d] \\
G(\e) \ar[r]_{\phi_\e} & G(\t(\e)) 
}
$$
\item[(ii)] Let $\e\in {\rm EM}^+$; here $H(\e^+)=H(\e^-)=G(\e)=\ZZ$. There are two cases:\smallskip
\begin{itemize}
\item[(ii.a)] If $\t(\e)\in {\rm VM}^+$; Define $\phi_{\e^+}$, $\phi_{\e^-}$ by the commutative diagrams:
$$
\xymatrix{
H(\e^+) \ar@{=}[d] \ar[r]^{\phi_{\e^+}} & H(\t(\e)^+) \ar@{=}[d] \\
G(\e) \ar[r]_{\phi_\e} & G(\t(\e)) 
}
\qquad
\xymatrix{
H(\e^-) \ar@{=}[d] \ar[r]^{\phi_{\e^-}} & H(\t(\e)^-) \ar@{=}[d] \\
G(\e) \ar[r]_{\phi_\e} & G(\t(\e)) 
}
$$
\item[(ii.b)] If $\t(\e)\in {\rm VM}^-$; in that case  $H(\t(\e)^+)\,\lhd_2\,G(\t(\e))$ and one has the automorphism $ad_{\t(\e)}$ of $H(\t(\e)^+)$ defined above. 
 Since $\e\in {\rm EM}^+$ and $\t(\e)\in {\rm VM}^-$,  one has $\phi_\e(G(\e))\subset H(\t(\e)^+)$. Define $\phi_{\e^+}$, $\phi_{\e^-}$ by the commutative diagrams:
$$
\xymatrix{
H(\e^+) \ar@{=}[d] \ar[r]^{\phi_{\e^+}} & H(\t(\e)^+) \ar@{^{(}->}^{2}[d] \\
G(\e) \ar[r]_{\phi_\e} & G(\t(\e)) 
}
\qquad
\xymatrix{
H(\e^-) \ar@{=}[d] \ar[r]^{\phi_{\e^-}} & H(\t(\e)^+) \ar@{^{(}->}^{2}[d] \\
G(\e) \ar[r]_{ad_{\t(\e)}\circ\phi_\e} & G(\t(\e)) 
}
$$
\end{itemize}
\end{itemize}

The graph ${\rm N}$ together with the families of vertex groups $H(\v)$, $\v\in {\rm VN}$, edge groups $H(\e)$, $\e\in {\rm EN}$ and monomorphisms $\phi_\e:H(\e)\longrightarrow H(\t(\e))$, $\e\in {\rm EN}$, defines the graph of group $\Ng$. The vertex and edge groups of $\Ng$ are subgroups  of vertex and edge subgroups of $\Mg$, which gives rise to the  two families of monomorphisms: 
\begin{gather*}
\p_\v:H(\v)\lra G(\p(\v)),
\v\in {\rm VN}\\
\p_\e:H(\e)\lra G(\p(\e)), \e\in {\rm EN}.
\end{gather*}
For each $\e\in {\rm EN}$ define $\mu(\e)\in G(\t(\p(\e)))$ by:
$$
\begin{array}{ll}
\text{If}\ \e\in {\rm EM}^-, &\mu(\e^+)=1\\
\text{If}\ \e\in {\rm EM}^+, &\mu(\e^+)=1,\quad
\mu(\e^-)=
\begin{cases}
1 &\text{if}\ \t(\e)\in {\rm VM}^+\\
\mu(\t(\e)) & \text{if}\ \t(\e)\in {\rm VM}^- 
\end{cases}
\end{array}
$$
Let $ad_{\e}$ be the automorphism of $G(\t(\p(\e)))$: $ad_\e(h)=\mu(\e)\, h\, \mu(\e)^{-1}$. By construction the following diagram commutes for all $\e\in {\rm EN}$:
\[
\begin{CD}
H(\e) @>\phi_\e>> H(\t(\e))\\
@V\p_\e VV @VV\p_{\t(\e)}V\\
G(\p(\e)) @>>ad_\e\circ\,\phi_{\p(\e)}>  G(\t(\p(\e)))
\end{CD}
\]
Consider $\p_\#:\Ng\lra\Mg$ as in the definition of covers of graphs of groups (cf p.\pageref{coverpage}). Let $\x\in {\rm VM}$ and  $\tl{\x}=\x^+\in \p^{-1}(\x)$; it remains to prove that $\p_\#$ induces a monomorphism $\p_*:\pi_1(\Ng,\tl{\x})\lra \pi_1(\Mg,\x)$. First $\p_\#$ induces an homomorphism $\p_*:\pi_1(\Ng,\tl{\x})\lra \pi_1(\Mg,\x)$, since whenever $\t(\e_n)=\t(\ol{\e}_{n+1})$:
$$
\begin{array}{ll}
&\p_\#(g_0,\e_1,\ldots,\e_n,g_n)\,\p_\#(h_n,\e_{n+1},\ldots,\e_m,g_m)\\
=&(g_0',\p(\e_1),\ldots,\p(\e_n),\mu(\e_n)\p_{\t(\e_n)}(g_n)\p_{\t(\ol{\e}_{n+1})}(h_n)\mu(\ol{\e}_{n+1})^{-1},
\p(\e_{n+1}),\\
&\qquad\qquad\qquad\qquad\qquad\qquad\qquad\qquad\qquad\qquad\quad\ldots,\p(\e_m),g'_m)\\
=&(g_0',\p(\e_1),\ldots,\p(\e_n),\mu(\e_n)\p_{\t(\e_n)}(g_nh_n)\mu(\ol{\e}_{n+1})^{-1},
\p(\e_{n+1}),\\
& \qquad\qquad\qquad\qquad\qquad\qquad\qquad\qquad\qquad\qquad\quad\ldots,\p(\e_m),g'_m)\\
=&\p_\#(g_0,\e_1,\ldots,\e_n,g_nh_n,\e_{n+1},\ldots,\e_m,g_m)\\
\end{array}
$$
Secondly $\p_*$ is injective: we prove that the image of a reduced $(\Ng,\tl{\x})$-loop is a reduced $(\Mg,\x)$-loop. Clearly the image of a  $(\Ng,\tl{\x})$-loop is a $(\Mg,\x)$-loop. Let $\gamma=(g_0,\e_1,\ldots,\e_n,g_n,\e_{n+1},\ldots,\e_m,g_m)$ be a reduced $(\Ng,\tl{\x})$-loop. Suppose that  $\p_\#(\gamma)$ is not reduced, more precisely that $\p(\e_n)=\p(\ol{\e}_{n+1})$ and that $\mu(\e_n)\p_{\t(\e_n)}(g_n)\mu(\ol{\e}_{n+1})^{-1}$ lies in $\phi_{\p(\e_n)}(G(\p(\e_n)))$. There are several cases to consider:
\begin{itemize}
\item[(i)] If $\p(\e_n)\in {\rm EM}^-$; here $\e_n=\ol{\e}_{n+1}$, $\mu(\e_n)=\mu(\ol{\e}_{n+1})$ and $\p_{\t(\e_n)}(g_n)$ lies in
$\phi_{\p(\e_n)}(G(\p(\e_n)))$ if and only if $g_n\in \phi_{\e_n}(H(\e_n))$; since $\phi_{\e_n}(H(\e_n))=\phi_{\p(\e_n)}(G(\p(\e_n)))\cap \p_{\t(\e_n)}(H(\t(\e_n)))$. In that case $\gamma$ is non-reduced.\smallskip
\item[(ii)] If $\p(\e_n)\in {\rm EM}^+$; there are two cases to consider:\smallskip
\begin{itemize}
\item[(ii.a)] if $\p(\t(\e_n))\in {\rm VM}^+$; here $\e_n\not= \ol{\e}_{n+1}$ implies $\t(\e_n)\not=\t(\ol{\e}_{n+1})$, hence $\e_n=\ol{\e}_{n+1}$. Moreover $\mu(\e_n)=\mu(\ol{\e}_{n+1})=1$. As above $\gamma$ is non-reduced.\smallskip
\item[(ii.b)] If $\p(\t(\e_n))\in {\rm VM}^-$; there are four cases to consider:\smallskip
\begin{itemize}
\item[(ii.b.1)] if $\e_n=\ol{\e}_{n+1}=\p(\e_n)^+$; then $\mu(\e_n)=\mu(\ol{\e}_{n+1})=1$ and as above $\gamma$ is non-reduced.
\item[(ii.b.2)] If $\e_n=\ol{\e}_{n+1}=\p(\e_n)^-$; then $\mu(\e_n)=\mu(\ol{\e}_{n+1})=\mu(\t(\e_n))$. Here $\mu(\t(\e_n))\p_{\t(\e_n)}(g_n)\mu(\t(\e_n))^{-1}$ lies in $\phi_{\p(\e_n)}(G(\p(\e_n)))$ if and only if $g_n\in \phi_{\e_n}(H(\e_n))$. So $\gamma$ is non-reduced. \smallskip
\item[(ii.b.3)] If $\e_n=\p(\e_n)^-$ and $\ol{\e}_{n+1}=\p(\e_n)^+$; then $\mu(\e_n)=\mu(\t(\e_n))$ and $\mu(\ol{\e}_{n+1})=1$. This leads to a contradiction since $\mu(\t(\e_n))\,\p_{\t(\e_n)}(g_n)\not\in \p_{\t(\e_n)}(H(\t(\e_n)))$ while $\phi_{\p(\e_n)}(G(\p(\e_n)))\subset \p_{\t(\e_n)}(H(\t(\e_n)))$. \smallskip
\item[(ii.b.4)]  If $\e_n=\p(\e_n)^+$ and $\ol{\e}_{n+1}=\p(\e_n)^-$; one obtains a contradiction as in the latter case.
\end{itemize} 
\end{itemize}
\end{itemize}
This concludes the proof.\smallskip
\end{proof}

{\noindent\sl$\bullet$\ \begin{minipage}[t]{\largeur}
{Construct graph of groups decompositions $\Mg$ of $\pi_1(\M)$ and $\Ng$ of $\pi_1(\N)$ and  the covering of graphs of groups $\pp:\Ng\lra\Mg$. }
\medskip
\end{minipage}
}

The choice of a maximal tree ${\rm T}$ in the underlying graph ${\rm X}$ of a graph of group $\X$ defines embeddings of the vertex and edge groups in $\pi_1(\X,\x)$. Let $\v\in {\rm VX}$, the monomorphism $G(\v)\longrightarrow \pi_1(\X,\x)$ is defined by:
$$
g\in G(\v) \longmapsto (1,\e_1,1,\ldots,\e_n,g,\ol{\e}_{n},\ldots,1,\ol{\e}_1,1).
$$
where $(\e_1,\ldots,\e_n)$ is the simple path in ${\rm T}$ from $\x$ to $\v$. Once embeddings of the vertex groups are given, their images in $\pi_1(\X,\x)$ are called {\sl vertex subgroups}. Since edge groups embed in vertex groups, embeddings of vertex groups define also embeddings of the edge groups in $\pi_1(\X,\x)$; their image in $\pi_1(\X,\x)$ are called {\sl edge subgroups} and they all lie in  vertex subgroups. For $\v\in {\rm VX}$ and $\e\in{\rm EX}$, the corresponding vertex and edge subgroups will be denoted by $G_\v$, $G_\e$.

\begin{lem}
One can  construct maximal trees ${\rm T}_{\rm N}$ of ${\rm N}$ and ${\rm T}_{\rm M}$ of ${\rm M}$ such that  $\forall\,\e\in {\rm T}_{\rm N}$, $\p(\e)\in {\rm T}_{\rm M}$.
\end{lem}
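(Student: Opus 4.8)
The plan is to choose a maximal tree ${\rm T}_{\rm M}$ of ${\rm M}$ first, and then to locate ${\rm T}_{\rm N}$ inside its full preimage $\p^{-1}({\rm T}_{\rm M})$, the subgraph of ${\rm N}$ spanned by those edges $\e$ with $\p(\e)\in {\rm T}_{\rm M}$. Any maximal tree of $\p^{-1}({\rm T}_{\rm M})$ then automatically has $\p(\e)\in {\rm T}_{\rm M}$ for each of its edges, so the lemma reduces to a single point: that $\p^{-1}({\rm T}_{\rm M})$ is a \emph{connected} spanning subgraph of ${\rm N}$. It always spans, since ${\rm T}_{\rm M}$ meets every vertex of ${\rm M}$ and $\p$ is onto on ${\rm VN}$; and everything in sight being finite and explicitly given, once connectedness is known the extraction of ${\rm T}_{\rm N}$ is algorithmic.

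The heart of the matter is therefore connectedness, and the mechanism I would use is the collapsing of the two ``sheets'' of $\p$ over a non-orientable vertex. Fix a vertex $\v\in {\rm VM}^-$; its fibre $\p^{-1}(\v)$ is the single vertex $\v^+$. Reading off the endpoint map, every edge $\e$ with $\t(\e)=\v$ has all of its lifts ending at $\v^+$: if $\e\in {\rm EM}^+$ then $\t(\e^+)=\t(\e^-)=\v^+$ precisely because $\t(\e)\in {\rm VM}^-$, while if $\e\in {\rm EM}^-$ the unique lift $\e^+$ already satisfies $\t(\e^+)=\v^+$. Moreover $\p$ is surjective on stars (each boundary surface of a piece $\M_i$ is covered by a boundary surface of $\N_i$), so paths in ${\rm M}$ lift to paths in ${\rm N}$. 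Given any $\tl u\in {\rm VN}$ over $u=\p(\tl u)$, I take the geodesic of ${\rm T}_{\rm M}$ from $u$ to $\v$ and lift it edge by edge starting at $\tl u$; this produces a path inside $\p^{-1}({\rm T}_{\rm M})$ from $\tl u$ to a lift of $\v$, necessarily the unique vertex $\v^+$. Hence every vertex of ${\rm N}$ is joined to $\v^+$ within $\p^{-1}({\rm T}_{\rm M})$, which is thus connected; choosing any maximal tree ${\rm T}_{\rm N}$ of it finishes the proof.

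The step I expect to be delicate is securing a uniquely covered vertex $\v\in {\rm VM}^-$ on which to hook the argument. This is where the coherence built in Step~2 enters: every $\PP^2$ of $\Pi$ and every $\KB^2$ of $\Xi$ produces an edge of ${\rm EM}^-$ whose extremity lies in ${\rm VM}^-$, and each pair $\tore,\s\tore$ of canonical tori cobounding a $\s$-invariant $\TT^2\times\I$ descends to a twisted $I$-bundle, hence to a vertex of ${\rm VM}^-$; so a non-orientable piece is available in the configurations the machinery addresses. Were ${\rm VM}^-$ empty, the fibres of $\p$ over ${\rm T}_{\rm M}$ would split into two disjoint copies and no lift of ${\rm T}_{\rm M}$ could span ${\rm N}$ — which is exactly why this existence point, rather than the formal path-lifting verification, is the real content of the lemma.
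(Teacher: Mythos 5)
Your proof is correct, and it runs in the opposite direction from the paper's. The paper first builds ${\rm T}_{\rm N}$ by the standard greedy procedure (repeatedly attaching an edge leading to a new vertex), and only then grows ${\rm T}_{\rm M}$ using edges of $\p({\rm ET}_{\rm N})$, declaring the required property ``immediately verified''; you instead fix ${\rm T}_{\rm M}$ first and take ${\rm T}_{\rm N}$ to be a maximal tree of the preimage $\p^{-1}({\rm T}_{\rm M})$, reducing the lemma to connectedness of that preimage, which you prove by lifting ${\rm T}_{\rm M}$-geodesics edge by edge into the unique vertex $\v^+$ lying over a vertex $\v\in {\rm VM}^-$. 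Your direction is in fact the more robust one. The desired property forces $\p({\rm T}_{\rm N})$, a connected spanning subgraph of ${\rm M}$ contained in the tree ${\rm T}_{\rm M}$, to equal ${\rm T}_{\rm M}$; so the paper's order of construction only succeeds if the initial ${\rm T}_{\rm N}$ happens to have a tree as image, and an arbitrary greedy tree need not. For instance, if $\v_1\in {\rm VM}^-$ is joined to $\v_2\in {\rm VM}^+$ by two torus edges $a,b\in {\rm EM}^+$, then ${\rm N}$ has vertices $\v_1^+,\v_2^+,\v_2^-$, with $a^+,b^+$ joining $\v_2^+$ to $\v_1^+$ and $a^-,b^-$ joining $\v_2^-$ to $\v_1^+$ by the paper's endpoint formulas; the paper's own algorithm started at $\v_2^+$ may output ${\rm T}_{\rm N}=\{a^+,b^-\}$, whose image $\{a,b\}$ contains a cycle, so that no maximal tree of ${\rm M}$ contains the images of both edges. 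Your preimage argument suffers no such dependence on choices: it works for every ${\rm T}_{\rm M}$, the edge-by-edge lifting being justified by star-surjectivity of $\p$ (every oriented edge of ${\rm M}$ has a lift issuing from each vertex over its origin, as one checks on the formulas defining ${\rm N}$), and the lifted geodesic necessarily terminates at $\v^+$ since the fibre over ${\rm VM}^-$ is a singleton. On the one point you flag as delicate --- existence of a vertex in ${\rm VM}^-$ --- your topological discussion can be shortcut by the observation you in fact make at the end: since $\t({\rm EM}^-)\subset {\rm VM}^-$, emptiness of ${\rm VM}^-$ would make ${\rm N}$ two disjoint copies of ${\rm M}$, contradicting that ${\rm N}$ is the (connected) decomposition graph of the connected manifold $\N$ produced in the previous lemma; so ${\rm VM}^-\not=\varnothing$ comes for free from the standing setup rather than from a fresh topological argument.
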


\begin{proof}
Apply a usual algorithm to construct a maximal tree ${\rm T}_{\rm N}$ of ${\rm N}$: initially  ${\rm T}_{\rm N}$ is reduced to a vertex of ${\rm N}$; while ${\rm VT}_{\rm N}\not={\rm VN}$ add to ${\rm T}_{\rm N}$  some edges $\e,\ol{\e}$ and the vertex $\v$ such that $\v\not\in {\rm VN}\setminus {\rm VT}_{\rm N}$ and $\t(\e)=\v$, $\t(\ol{\e})\in {\rm VT}_{\rm N}$. Adapt this algorithm to the search of ${\rm T}_{\rm M}$ so that $\forall\,\e\in {\rm ET}_{\rm N}$, $\p(\e)\in {\rm ET}_{\rm M}$:   initially  ${\rm T}_{\rm M}$ is reduced to a vertex of ${\rm M}$; while ${\rm VT}_{\rm M}\not={\rm VM}$ add to ${\rm T}_{\rm M}$  some edges $\e,\ol{\e}$ and the vertex $\v$ where $\e\in \p({\rm ET}_{\rm N})$, $\v\not\in {\rm VM}\setminus {\rm VT}_{\rm M}$ and $\t(\e)=\v$, $\t(\ol{\e})\in {\rm VT}_{\rm M}$. One verifies immediately that the algorithm produces a maximal tree ${\rm T}_{\rm M}$ with the required property. \smallskip
\end{proof}

{\noindent\sl$\bullet$\ \begin{minipage}[t]{\largeur}
{ Construct maximal trees ${\rm T}_{\rm M}$ of ${\rm M}$ and ${\rm T}_{\rm N}$ of ${\rm N}$ as above and fix $\x\in {\rm VM}$ and $\tl{\x}\in\p^{-1}(\x)$; that defines the vertex and edge subgroups of $\pi_1(\Mg,\x)$ and $\pi_1(\Ng,\tl{\x})$}.
\end{minipage}
}

Now that maximal trees ${\rm T}_{\rm M},{\rm T}_{\rm N}$ of ${\rm M},\,{\rm N}$ and base-points  $\x\in {\rm VM}$, $\tl{\x}\in {\rm VN}$ are given one can talk of {\sl Seifert vertex subgroups} and {\sl non-Seifert vertex subgroups} of $\pi_1(\Ng,\tl{\x})$ (respectively as those that come from puntured Seifert fibered pieces, and those that don't) and similarly of {\sl $\{1\}$}, {\sl $\ZZ$}, {\sl $\Z_2$} and {\sl $\Z\rtimes\Z$, edge subgroups} of $\pi_1(\Mg,\x)$ and (for the two former) of $\pi_1(\Ng,\tl{\x})$.
One also partition vertex subgroups of $\pi_1(\Mg,\x)$ into Seifert and non-Seifert vertex subgroups, accordingly to the partition of vertex subgroups of $\pi_1(\Ng,\tl{\x})$.
\medskip

In a graph of group $\X$, given a maximal tree ${\rm T}$ of ${\rm X}$, an $(\X,\x)$-loop $\gamma$ is said to be {\sl cyclically reduced} whenever:
\begin{itemize}
\item[(i)] $\gamma$ is a reduced $(\X,\x)$-loop, and
\item[(ii)] either its length is less than 2 or:
$$
\gamma=(1,\e_1,\ldots,1,\e_p,1)(g_0,\e_{p+1},\ldots,\e_n,g_n)(1,\ol{\e}_p,1,\ldots,\ol{\e}_1,1)
$$
where $(\e_1,\ldots,\e_p)$ is the simple path in ${\rm T}$  from $\x$ to $\t(\e_n)$ (eventually reduced to $(\x)$)
and either $\e_{p+1}\not=\ol{\e}_n$ or $g_ng_0\not\in \phi_{\e_n}(G(\e_n))$.\smallskip
\end{itemize}

We can now state basic algorithms that help working with elements in $\pi_1(\Mg)$ and $\pi_1(\Ng)$.

\begin{lem}[Basic algorithms in $\pi_1(\Mg)$]\label{reduce}
Let $\pp:\Ng\longrightarrow \Mg$ be the covering found above. Fix elements $\x\in {\rm VM}$ and $\tl{\x}\in\p^{-1}(\x)\subset {\rm VN}$; then:
\begin{itemize}
\item[(i)] {\rm (Cyclic reduction)}. There is an algorithm which given a \linebreak $(\Mg,\x)$-loop $\g$ change it into a cyclically reduced $(\Mg,\x)$-loop $\g'$, such that $\g,\g'$ represent conjugate elements in $\pi_1(\Mg,\x)$.\smallskip
\item[(ii)] {\rm (Algorithm $\mathfrak{GWP}(H,G)$)}. There is an algorithm which given a $(\Mg,\x)$-loop $\g$ decides whether $\g$ represents an element of $\pi_1(\Mg,{\x})$ lying in $\p_*(\pi_1(\Ng,\tl{\x}))$, and if so, constructs a $(\Ng,\tl{\x})$-loop  $\g'$, with same the length as $\g$, and such that $\p_\#(\g')=\g$. Moreover, whenever $\g$ is reduced (resp. cyclically reduced) then so is $\g'$.
\end{itemize}
\end{lem}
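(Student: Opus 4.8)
The plan is to treat both parts as purely algorithmic reductions to the subroutines already assembled in Lemma~\ref{algopieces} and to the covering data of Lemma~\ref{graphsofgroups}; the only genuinely new content is that these subroutines suffice and that the natural rewriting procedures terminate.

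For (i) I would first isolate a \emph{reduction} subroutine. Given an $(\Mg,\x)$-loop $\g=(g_0,\e_1,g_1,\ldots,\e_n,g_n)$, I scan for a backtrack, i.e.\ an index $i$ with $\e_{i+1}=\ol{\e}_i$ and $g_i\in\phi_{\e_i}(G(\e_i))$. Each edge group embeds as a peripheral (boundary) subgroup of the adjacent vertex group, so testing $g_i\in\phi_{\e_i}(G(\e_i))$ and exhibiting a preimage is exactly the generalized word problem for boundary subgroups solved in Lemma~\ref{algopieces}.(iv) (the word problem in the vertex groups being available since the pieces are geometrizable, cf.\ \cite{epstein}). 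When a backtrack is found I collapse it by relation~$(*)$, which strictly shortens the loop, so the process halts on a reduced loop. For cyclic reduction I then repeatedly inspect the seam: while the length is $\geq 2$ and $\ol{\e}_n=\e_1$ with $g_ng_0\in\phi_{\e_n}(G(\e_n))$ (tested again by Lemma~\ref{algopieces}.(iv)), I conjugate to cancel the outermost pair of edges $\e_1,\e_n$, which strictly shortens the loop, then re-reduce and iterate. When this stops I am left with a cyclically reduced core loop based at some vertex $\v$; prepending the tree geodesic from $\x$ to $\v$ and appending its reverse re-bases it at $\x$ and puts it in the canonical form of the definition. Termination follows from the monotone length decrease, and every operation replaces $\g$ by a representative of its conjugacy class.

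For (ii) the key point is that, under $\pi_1(\Mg,\x)\simeq\pi_1(\M)$, the subgroup $H=\p_*(\pi_1(\Ng,\tl{\x}))$ is the index-two subgroup $\pi_1(\N)=\ker w$ attached to the orientation character $w:\pi_1(\M)\to\Z/2$. I would decide membership by attempting to lift $\g$ through the covering $\pp$ and checking whether the lift closes up. Concretely, starting from $\tl{\x}=\x^+$ I lift the edge path one edge at a time, the current sheet together with the orientation type of the intervening vertex element (decided by counting orientation-reversing generators as in the proof of Lemma~\ref{algopieces}.(ii)) determining the next lifted edge $\tilde{\e}_i$; and at each vertex I use algorithm $\mathfrak{Gwp}(W,V)$ of Lemma~\ref{algopieces}.(ii) to rewrite $\mu(\tilde{\e}_i)^{-1}g_i\,\mu(\ol{\tilde{\e}}_{i+1})$ as a word $h_i$ on the generators of the (index-one or index-two) vertex subgroup $H(\t(\tilde{\e}_i))$. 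The path returns to the starting sheet $\tl{\x}$ and admits all these rewritings precisely when $w(\g)=0$, that is when $\g\in H$; in that case $\g'=(h_0,\tilde{\e}_1,h_1,\ldots,\tilde{\e}_n,h_n)$ satisfies $\p_\#(\g')=\g$ by the defining formula of $\p_\#$ and has the same length $n$ since $\p(\e^{\pm})=\e$.

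Finally, the preservation of reducedness is inherited from Lemma~\ref{graphsofgroups}: the case analysis there establishes a correspondence between backtracks of an $(\Ng,\tl{\x})$-loop and backtracks of its image under $\p_\#$ (cases (ii.b.3)--(ii.b.4) ruling out the spurious ones), so $\g'$ is reduced iff $\g=\p_\#(\g')$ is, and together with $\p({\rm T}_{\rm N})\subset{\rm T}_{\rm M}$ the same correspondence applied at the seam gives that $\g'$ is cyclically reduced iff $\g$ is. I expect the main obstacle to lie in part~(ii), namely in verifying that the lift closing up characterizes membership \emph{exactly} and that the simultaneous vertex rewritings are mutually consistent, since this is the step that genuinely exploits the covering data (the conjugators $\mu(\e)$ and the commuting squares) rather than merely the word-problem subroutines; by contrast, once the lift is known to be well posed, both reducedness claims come for free from Lemma~\ref{graphsofgroups}, and the cyclic-reduction loop of part~(i) is a standard length-decreasing procedure.
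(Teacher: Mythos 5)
Your proposal is correct and follows essentially the same route as the paper: part (i) is the paper's seam-by-seam cyclic reduction driven by the generalized word problem of Lemma \ref{algopieces}.(iv) (conjugating away the outer pair $\e_1,\ol{\e}_n$ and re-basing along the tree), and part (ii) is the paper's deterministic sheet-tracking rewrite, with the $\mu(\e)$-corrections and the membership/rewriting subroutine of Lemma \ref{algopieces}.(ii) playing exactly the roles you assign them. The only point to tighten is the preservation of (cyclic) reducedness: the case analysis of Lemma \ref{graphsofgroups} proves the implication ``$\g'$ reduced $\Rightarrow$ $\p_\#(\g')$ reduced'', which is the converse of what you need here, so either invoke the ``if and only if'' statements embedded in its cases (noting that a backtrack upstairs forces $\p(\e_n)=\ol{\p(\e_{n+1})}$ downstairs, so the excluded configurations (ii.b.3)--(ii.b.4) cannot arise), or argue as the paper does, more cheaply: since $\p_\#$ preserves length and induces a homomorphism $\p_*$, an unreduced $\g'$ would produce a loop equivalent to $\g$ of strictly smaller length, contradicting that all reduced representatives have equal length.
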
 

\begin{proof} We prove separately (i) and (ii). \\
{\it Proof of {\rm (i)}}. The first step changes $\g$ into a reduced $(\Mg,\x)$-loop which represents  the same element of $\pi_1(\Mg,\x)$; this is done by applying the algorithm given by Lemma \ref{algopieces}.(iv) for the generalized word problem in edge subgroups of some vertex groups. If the reduced $(\Mg,\x)$-loop obtained, say $\g=(g_0,\e_1,\ldots , \e_n,g_n)$, has length $n< 2$, or if $\e_1\not=\ol{\e}_n$ then $\g$ is cyclically reduced and the process stops. Otherwise, $n\geq 2$ and $\e_1=\ol{\e}_n$; use Lemma \ref{algopieces}.(iv) to decide whether $g_ng_0\in \phi_{\e_n}(G(\e_n))$. If not then the $(\Mg,\x)$-loop obtained is cyclically reduced and the process stops; if yes  change it into:
$$
\g'=(1,\e'_1,\ldots, 1,\e'_p)
\underbrace{
(\phi_{\e_1}(g_ng_0)g_1,\e_2,g_2,\ldots,\e_{n-1},g_{n-1})
}_{\g''}
(\ol{\e}'_p, 1,\ldots,\ol{\e}'_1,1)
$$
where $(1,\e'_1,\ldots, 1,\e'_p)$ is the simple path in ${\rm T}$ from $\x$ to $\t(\e_1)$;  $\g''$ is a reduced $(\Mg,\t(\e_1))$-loop. Consider  the $(\Mg,\x)$-loop $\a=(1,\e'_1,\ldots,1,{\e'_p},$ $1,\ol{\e}_1,g_n)$, then $\g'=\a\g\a^{-1}$ in $\pi_1(\Mg,\x)$.   If $\g''$ is cyclically reduced then $\g'$ is cyclically reduced and the process stops. Otherwise apply the same process to the $(\Mg,\t(\e_1))$-loop $\g''$, and so on; after an eventual reduction of the prefix path in ${\rm T}$ one finally obtains a cyclically reduced $(\Mg,\x)$-loop which represents a conjugate of $\g$ in $\pi_1(\Mg,\x)$.  \smallskip

\noindent
{\it Proof of {\rm (ii)}}. For each vertex group $H(\v)$ of $\Ng$ one considers its image $\p_\v(H(\v))$ in $G(\p(\v))$, denoted $H(\p(\v))$, that one identifies with $H(\v)$; it has  index at most 2. We consider the generating sets $S_{\v}$ of $G(\v)$ and $S'_\v$ of $H(\v)$ as in Lemma \ref{algopieces}.(i). For each $\v\in {\rm VM}$, and for each  word $w$ on $S_{\v}\cup S_{\v}^{-1}$ defining an element in $H(\v)\subset G(\v)$  denote by $\ol{w}$ the word on $S'_\v\cup {S'}_\v^{-1}$, given by Lemma \ref{algopieces}.(ii),  equal to $w$ in $G(\v)$.

As in the proof of Lemma \ref{graphsofgroups}, let ${\rm VM}={\rm VM}^+\sqcup {\rm VM}^-$, ${\rm VN}=\{\v^+;\v\in {\rm VM}\}\sqcup\{\v^-;\v\in {\rm VM}^+\}$ and elements $\mu(\e)\in G(\t(\p(\e))$ for all $\e\in {\rm EN}$ defined by the covering $\pp:\Ng\lra\Mg$, with $\tl{\x}=\t(\v^+)$ whenever $\x=\t(\v)$. 
Given a $(\Mg,\x)$-loop:
$$\g=(g_0,\e_1,\ldots , \e_n,g_n)$$
where each $g_i\in G(\v_i)$ is given by a word on $S_{\v_i}\cup S_{\v_i}^{-1}$, one algorithmically change it into a $(\Ng,\tl{\x})$-loop by applying the following transformation rules from the left to the right:
$$
\begin{array}{ll}
\multicolumn{2}{l}{- \text{If}\ n=0: \forall\,w_0\ \text{a word on}\ S_{\x}\cup S_\x^{-1}}\\
w_{0} & \longmapsto \begin{cases}
\ol{w}_{0}\qquad\qquad\qquad\qquad\qquad\quad\ \, &\ w_0\in H(\x)\\
w_0 &\ w_0\not\in H(\x)
\end{cases}\smallskip\\
%
\multicolumn{2}{l}{- \text{If}\ n>0:}\\
\multicolumn{2}{l}{\forall\,w_i\ \text{a word on}\ S_{\t(\ol{\e}_{i+1})}\cup S_{\t(\ol{\e}_{i+1})}^{-1}}\smallskip\\
 w_i,\e_{i+1},w_{i+1} & \longmapsto \begin{cases}
\ol{w}_i,\e_{i+1}^+,w_{i+1} &\ w_i\in H(\t(\ol{\e}_{i+1}))\\
\ol{w_i\mu(\ol{\e}_{i+1}}),\e_{i+1}^-,\mu(\e_{i+1})^{-1}w_{i+1} &\ w_i\not\in H(\t(\ol{\e}_{i+1}))
\end{cases}\\
\multicolumn{2}{l}{\forall\,w_n\ \text{a word on}\ S_{\t({\e}_n)}\cup S_{\t({\e}_n)}^{-1}}\smallskip\\
\e_n^\pm,w_{n} & \longmapsto \begin{cases}
\e_{n}^\pm,\ol{w}_{n}\qquad\qquad\qquad\qquad\quad\ \ \,&\ w_n\in H(\x)\\
\e_{n}^\pm,w_n &\ w_n\not\in H(\x)
\end{cases}
\end{array}
$$
If one denotes by $\g'=(g'_0,\e^\pm_1,\ldots , \e^\pm_n,g'_n)$ the loop obtained, then $\g'$ is a $(\Ng,\tl{\x})$-loop if and only if $g'_n\in H(\t(\e_n))$ if and only if $\g$ represents a element in $\p_*(\pi_1(\Ng,\tl{\x}))$, and in such case $\p_\#(\g')=\g$.
In particular, since $p_\#$ induces an homomorphism $\p_*:\pi_1(\Ng,\tl{\x})\lra\pi_1(\Mg,\x)$, when $\g'$ is not reduced then neither is $\g$. The same argument applied in $\pi_1(\Mg,{\rm y})$ for ${\rm y}=\t(\e_1)$ shows that whenever $\g$ is cyclically reduced then so is $\g'$. 
\end{proof}

{\noindent\sl$\bullet$\ \begin{minipage}[t]{\largeur}
{
In the following we usually write:
$$G=\pi_1(\Mg,\x)\quad ;\quad H=\pi_1(\Ng,\tl{\x})$$
with $H$ identified with the subgroup $\p_*(H)$ of $G$.}
\end{minipage}
}

We now make a review of what is known on basic Dehn problems in $G$ and $H$.

\begin{lem} The following algorithmic problems are known to be solvable  in $G$ and $H$:\label{dehnpbm}
\begin{itemize}
\item[(i)] {\rm (Algorithms $\mathfrak{WP}(H)$, $\mathfrak{CP}(H)$):} the word and conjugacy problems in $H$.
\item[(ii)] {\rm (Algorithm $\mathfrak{WP}(G)$):} the word problem in $G$.
\item[(iii)] {\rm (Algorithms $\mathfrak{WP}(H(\v))$, $\mathfrak{CP}(H(\v))$:} the word and conjugacy problems in vertex subgroups of $H$.
\item[(iv)] {\rm (Algorithms $\mathfrak{WP}(G(\v))$, $\mathfrak{CP}(G(\v))$:} the word and conjugacy problems in vertex subgroups of $G$.
\end{itemize}
\end{lem}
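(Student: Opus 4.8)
The plan is to read off the four items from the geometry of Steps~1--2 together with the solutions already available over oriented manifolds, the one delicate point being that conjugacy, unlike the word problem, does not pass for free to a finite overgroup. Items (i) and (ii) I would settle by direct appeal: the cover $\N$ is orientable by construction and geometrizable since $\M$ is, so the main result of \cite{cp3mg} furnishes $\mathfrak{CP}(H)$ and a fortiori $\mathfrak{WP}(H)$ for $H=\pi_1(\N)$; and as $\M$ is geometrizable, solvability of the word problem in groups of geometrizable $3$-manifolds (\cite{wald,epstein}) gives $\mathfrak{WP}(G)$ for $G=\pi_1(\M)$.

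For (iii) the vertex subgroups of $H$ are the $\pi_1$ of the components $\piece$ of $\N$ cut along $\Sigma\sqcup\W\sqcup\s\W$; by Step~2 each $\hat{\piece}$ is an \emph{orientable} Seifert fiber space or a finite volume hyperbolic manifold, and filling in spheres by balls does not change $\pi_1$. Such a group is biautomatic (\cite{epstein}), except in the $Nil$ and $Sol$ cases where it is virtually polycyclic; in every case both the word and conjugacy problems are solvable. These are precisely the basic Dehn solutions on which \cite{cp3mg} is built, so I would record $\mathfrak{WP}(H(\v))$ and $\mathfrak{CP}(H(\v))$ from there.

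Item (iv) is the only one with genuine content, and I expect its conjugacy part to be the main obstacle. The vertex subgroups $G(\v)$ of $G$ are fundamental groups of the components of $\M$ cut along $\Pi\sqcup\Xi$, again Seifert fiber spaces or finite volume hyperbolic manifolds, but now possibly non-orientable; when $\v\in {\rm VM}^+$ we fall back on (iii). For $\v\in {\rm VM}^-$ the word problem is easy: given a word $w$, decide by the parity criterion of Lemma~\ref{algopieces}.(ii) whether $w\in H(\v^+)$; if not then $w\neq1$, and if so rewrite $w$ over the generators of $H(\v^+)$ and call $\mathfrak{WP}(H(\v^+))$. The conjugacy problem cannot be obtained by merely running $\mathfrak{CP}(H(\v^+))$, since by Collins--Miller (\cite{collins}) a conjugacy problem need not descend from an index~$2$ subgroup; instead I would use that $G(\v)$ is itself the group of a (possibly non-orientable) geometric piece, hence again biautomatic or virtually polycyclic, so that $\mathfrak{CP}(G(\v))$ is solvable by the same references as in (iii).

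Should one prefer to reduce (iv) explicitly to the oriented data rather than re-invoke biautomaticity, the difficulty localizes exactly at the orientation-reversing elements and is resolved as follows. Writing $ad_\v$ for the computable automorphism $g\mapsto\mu(\v)\,g\,\mu(\v)^{-1}$ of $H(\v^+)$, two orientation-preserving elements satisfy $g_1\sim_{G(\v)}g_2$ iff $g_1\sim_{H(\v^+)}g_2$ or $g_1\sim_{H(\v^+)}ad_\v(g_2)$, both decidable by $\mathfrak{CP}(H(\v^+))$, while an orientation-preserving and an orientation-reversing element are never conjugate. The remaining case, $g_1,g_2\notin H(\v^+)$, is attacked by passing to the squares $g_i^2\in H(\v^+)$: after locating a conjugator of $g_1^2$ and $g_2^2$, the existence of a conjugator of $g_1,g_2$ reduces to a conjugacy problem inside the centralizer $Z_{G(\v)}(g_2^2)$ (involutions being treated apart), which for a geometric piece is computable and has solvable conjugacy problem. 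This last point is the ingredient flagged in the introduction and is where the argument must genuinely go beyond the oriented case.
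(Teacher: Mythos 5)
Your proposal is correct and follows essentially the same route as the paper: (i) and (ii) by direct appeal to \cite{cp3mg} and \cite{epstein}, (iii) via biautomaticity (\cite{epstein,nr1}) with the $Nil$ case handled separately as in \cite{cp3mg}, and (iv) by observing that non-orientable pieces cannot carry $Nil$-geometry so their groups are biautomatic, which is exactly the paper's argument (your ``or virtually polycyclic'' safety net is harmless but unneeded). Your final paragraph sketching an explicit reduction of $\mathfrak{CP}(G(\v))$ to the oriented data via squares and centralizers is a correct alternative but is not required; the paper settles (iv) purely by the biautomaticity observation and reserves the squares-and-centralizers strategy for the global algorithm of Step~4.
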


\begin{proof} (i). By hypothesis the 3-manifold $\N$ is orientable and geometrisable. A solution to the conjugacy problem in $H$ is given in \cite{cp3mg}. Let's give some details for a solution to the word problem.  Note that $H$ is a finitely presentable group. According to \cite[Theorem 12.4.7]{epstein} either $H=\pi_1(\N)$ is automatic or  $\N$ is closed and modeled on one of $Nil$, $Sol$  geometries. Automatic groups have solvable word problems (in quadratic time, cf. 
\cite[Theorem 2.3.10]{epstein}). 
A finitely presented group that contains a finite index subgroup with solvable word problem also has a solvable word problem; indeed their Dehn functions are equivalent (\cite[Proposition 1.3.4 and Exercise 1.3.5]{bridson}); note also that the finite index subgroup is also finitely presentable.
When $\N$ is modeled on $Nil$ then $H$ is virtually nilpotent, and finitely generated nilpotent groups have solvable word problem (in linear time, cf. \cite{goodmanshapiro}). When $\N$ is modeled on $Sol$ then $H$ is virtually $\ZZ\rtimes _\phi\Z$ with $\phi$ Anosov (cf. \cite[Theorem 5.3.(i)]{scott}) where a solution to the word problem  is easily provided by rewriting any element as a couple in $\ZZ\times\Z$. In any case $H$ has a solvable word problem.

(ii). Since $G$ is finitely presentable and contains a finite index subgroup $H$ having a solvable word problem, so does $G$ by the same argument as above.

(iii). Let $H_\v$ be a vertex subgroup of $H$; 
 in such case the solution is provided by \cite{epstein,nr1} or \cite{cp3mg} depending on whether $H_\v$ is biautomatic or arises from a piece modeled on $Nil$-geometry.

 (iv). Let $G_\v$ be a vertex subgroup of $G$; in cases it comes from an orientable piece the same argument  as in (iii) applies. Otherwise: a  non-orientable 3-manifold cannot be modeled on $Nil$-geometry (cf.\cite{scott}); it follows from \cite{epstein,nr1} that $G_\v$ is biautomatic and hence has solvable word and conjugacy problems (\cite[Theorem 2.5.7]{epstein}).
\end{proof}


We will make a heavy use of these basic algorithms. Another main ingredient will be that following algorithm finding the centralizer of any element in $H$. In case of Haken orientable 3-manifolds the structure of centralizers are quite simple and related to the JSJ decomposition as stated in  \cite[Theorem VI.I.6]{js}; one deduces the centralizers in groups of geometrisable orientable 3-manifolds, that one can compute, as follows.

\begin{lem}[Algorithm $\mathfrak{ZP}(H)$ for centralizers in $H$]\label{centralizer}
Let $u\in H\smallsetminus \{1\}$ as above; then exactly one of the following assertions occurs:
\begin{itemize}
\item[(i)] its centralizer $Z_H(u)$ is infinite cyclic and  does not lie in the conjugate  of a Seifert vertex subgroup,
\item[(ii)] $Z_H(u)$ lies in the conjugate of a Seifert vertex subgroup $H_\v$,
\item[(iii)] $Z_H(u)$ is conjugate to a $\ZZ$ edge subgroup $H_\e$ and does not lie in the conjugate of a Seifert vertex subgroup.\smallskip
\end{itemize}
There is an algorithm that determines which case occurs, and in cases (ii) and (iii) produces all possible vertex or edge subgroups $H_\v$, $H_\e$  and conjugating elements.
\end{lem}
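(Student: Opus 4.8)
The plan is to read off the structure of $Z_H(u)$ from the geometric decomposition of the orientable manifold $\N$, and then to make every step effective. Since $H=\pi_1(\N)=\pi_1(\Ng)$ is the fundamental group of an orientable geometrizable $3$-manifold, I would take the centralizer structure of Theorem VI.1.6 of \cite{js} for the Haken vertex pieces as the backbone, extend it from the Haken to the geometrizable setting (the new closed non-Haken pieces being Seifert, $Sol$ or hyperbolic manifolds, whose centralizers are classical), and draw the effective subroutines from \cite{cp3mg} and from Lemmas \ref{algopieces}, \ref{reduce} and \ref{dehnpbm}. Recall that the edge groups of $\Ng$ are either $\{1\}$ (sphere edges) or $\ZZ$ (torus edges), and that each vertex group $H(\v)$ is the $\pi_1$ of a Seifert or of a finite-volume hyperbolic piece.

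For the structural part I would let $H$ act on the Bass--Serre tree of $\Ng$ and split on whether $u$ is hyperbolic or elliptic. If $u$ is hyperbolic, then $Z_H(u)$ preserves its axis $A_u$ and hence maps to $\mathrm{Isom}(A_u)$; the pointwise stabilizer of $A_u$ is the intersection of the edge groups crossed by $A_u$, and it is trivial, because a $\{1\}$-edge contributes free-product structure while, across a $\ZZ$-edge, the canonicity of the tori forbids a single non-trivial peripheral element to survive in two consecutive pieces. As $H$ is torsion free this forces $Z_H(u)\simeq\Z$, i.e. case (i). If $u$ is elliptic, write $u_0=cuc^{-1}\in H(\v)$ for a suitable $\v$ and $c\in H$. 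When $u_0$ is non-peripheral in $H(\v)$ its centralizer cannot leave $H(\v)$, so $Z_H(u)=c^{-1}Z_{H(\v)}(u_0)c$: this is infinite cyclic when $H(\v)$ is hyperbolic (case (i)) and lies in a Seifert vertex subgroup when $H(\v)$ is Seifert (case (ii)). When $u_0$ is peripheral it is carried by a torus edge, $Z_H(u)$ contains the corresponding $\ZZ$, and either one of the two adjacent pieces is Seifert, whence $Z_H(u)$ lies in a Seifert vertex subgroup (case (ii)), or both are hyperbolic, whence $Z_H(u)$ equals that $\ZZ$-edge subgroup (case (iii)). The exclusion clauses ``not in the conjugate of a Seifert vertex subgroup'' in (i) and (iii) are exactly what make the three alternatives mutually exclusive.

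For the algorithm I would first cyclically reduce $u$ inside $\Ng$, the procedure of Lemma \ref{reduce}(i) applying equally here since it only calls the boundary generalized word problem of Lemma \ref{algopieces}(iv). A cyclically reduced length $\ge 1$ certifies that $u$ is hyperbolic, and I return case (i). Otherwise the reduction yields $u_0=cuc^{-1}\in H(\v)$ together with $c$. Using $\mathfrak{CP}(H)$ of Lemma \ref{dehnpbm}(i), together with the fact that the elliptic set of $u$ in the Bass--Serre tree is a subtree (so that $u$ conjugate into two distinct vertex groups must be conjugate into a common edge group), I would enumerate the finitely many vertices into which $u$ is conjugate and record the conjugating elements; whether each such $H(\v)$ is Seifert is read off from Lemmas \ref{seifertinvariant} and \ref{graphsofgroups}, and whether the relevant conjugate of $u$ is peripheral is decided by Lemma \ref{algopieces}(iv). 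If some vertex met is Seifert I output case (ii) with these Seifert vertex subgroups and conjugators; else, if the element is peripheral I output case (iii) with the $\ZZ$-edge subgroup and conjugator; otherwise I output case (i). Conjugacy inside a fixed vertex group, needed to identify the witnesses, is provided by $\mathfrak{CP}(H(\v))$ of Lemma \ref{dehnpbm}(iii).

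I expect the main obstacle to be the peripheral elliptic case: one must decide edge-subgroup membership effectively and, more delicately, return \emph{all} vertex and edge subgroups carrying $u$ up to conjugacy. A single peripheral $u$ is simultaneously carried by the two pieces abutting a canonical torus, and, inside a Seifert piece, by every torus parallel to a regular fibre; the list of witnesses must therefore be shown to be complete, and the search must be shown to terminate. This bookkeeping is what forces the combination of the boundary generalized word problem (Lemma \ref{algopieces}(iv)), the conjugacy problem $\mathfrak{CP}(H)$ (Lemma \ref{dehnpbm}(i)) and the explicit peripheral subgroup systems attached to the pieces in Lemma \ref{graphsofgroups}. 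The canonicity of the JSJ tori and the triviality of the axis stabilizer established above are precisely what guarantee that the trichotomy is exhaustive and that no witness is missed.
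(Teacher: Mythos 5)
Your overall route is the paper's (the backbone is Theorem VI.1.6 of \cite{js} plus the effective subroutines of \cite{cp3mg}, after cyclic reduction via Lemma \ref{reduce}), recast in Bass--Serre language, but it has a genuine gap in the structural half. Your key claim that the pointwise stabilizer of the axis of a hyperbolic element is trivial is justified by a false statement: it is \emph{not} true that ``canonicity of the tori forbids a single non-trivial peripheral element to survive in two consecutive pieces'' --- the fiber of a Seifert vertex piece lies in \emph{every} $\ZZ$-edge subgroup incident to that vertex, so nontrivial elements do fix segments of length two and more. What actually rescues the argument is the acylindricity of the splitting (Lemma 4.1 of \cite{cp3mg}), which bounds the length of a segment with nontrivial pointwise stabilizer; and this fails exactly when some $\hat{\N}_i$ is a $\TT^2$-bundle over $\SS^1$ modelled on $Sol$. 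There the tree is a line, every vertex and edge group equals the fiber $\ZZ$, that $\ZZ$ fixes the whole line pointwise, and your elliptic analysis (``the centralizer cannot leave $H(\v)$'') also breaks down. The paper isolates this case and settles it by hand: $K=(\ZZ)\rtimes_\theta\Z$ with $\theta$ Anosov has centralizers that are either infinite cyclic or the left $\ZZ$ factor, so assertion (ii) occurs. Without this case split, your trichotomy is unproven (even though, as it happens, hyperbolic elements of the Sol group still have cyclic centralizers --- but that needs the Anosov property, not your tree argument).

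The second gap is the algorithmic core, which you correctly flag as ``the main obstacle'' but do not resolve: the lemma requires outputting \emph{all} vertex and edge subgroups carrying conjugates of $u$, \emph{with} conjugating elements, and a yes/no oracle $\mathfrak{CP}(H)$ together with the subtree property of elliptic sets does not produce either. The paper instead walks the conjugacy chain of Theorem 3.1 of \cite{cp3mg} (conjugacy of elliptic elements into vertex subgroups factors through a finite sequence $c_1,\ldots,c_n$ of edge-subgroup elements linked by vertex-level conjugations) and makes each vertex step effective: Proposition 4.1 of \cite{cp3mg} (in a non-Seifert vertex, $u$ is conjugate to at most one element of at most one edge subgroup; in a Seifert vertex, either the same holds or $u$ is a power of the fiber and lies in \emph{all} incident edge subgroups), Theorem 6.3 and Proposition 5.1 of \cite{cp3mg} to decide these alternatives and produce conjugators, and the fiber test $sus^{-1}=u^{\pm 1}$ on generators (Lemma II.4.2 of \cite{js}). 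Acylindricity again guarantees the walk terminates, and minimality of the JSJ decomposition (no conjugacy to fibers of two distinct Seifert vertex subgroups) is what makes the witness list complete and the three alternatives mutually exclusive. These are the concrete ingredients your sketch would have to supply to close the peripheral/elliptic bookkeeping.
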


\begin{proof} 
First note that in case of an empty tori decomposition for $\N$,  since in groups of hyperbolic closed manifolds non-trivial centralizers  are all infinite cyclic (cf. \cite{scott}), the former assumption follows from \cite[Theorem VI.I.6]{js} and the latter assumption from Lemma \ref{seifertinvariant}. So we suppose in the following that the tori decomposition of $\N$ is non-empty.\smallskip

We are given an element $u\in H$ by a $(\Ng,\tl{\x})$-loop and are interested in its
centralizer; first apply the
algorithm of Lemma \ref{reduce}.(i) to $\p_\#(u)$ followed by that one of Lemma \ref{reduce}.(ii) to change $u$ into a cyclically reduced   $(\Ng,\tl{\x})$-loop $v$ conjugate to $u$ in $H$; it finds $h\in H$ such that $huh^{-1}=v$. Obviously $Z_H(v)=h\,Z_H(u)\,h^{-1}$, so that in the following we suppose that $u$ is a cyclically reduced $(\Ng,\tl{\x})$-loop.\smallskip

Let ${\rm EN}_0$ be the subset of ${\rm EN}$ of those edges whose groups are all trivial ({\it i.e.} related to spheres  in the topological    decomposition of $\N$). Denote $N_1,\ldots N_p$ the connected components of the graph obtained from $N$ by deleting all edges in ${\rm EN}_0$. Together with $\Ng$ this defines graphs of groups $\Ng_1,\ldots \Ng_p$ by restricting $\Ng$ to the respective subgraphs ${\rm N}_1,\ldots,{\rm N}_p$. The choice of base points $\tl{\x}_1,\ldots,\tl{\x}_p$ in ${\rm N}_1,\ldots,{\rm N}_p$ together with the maximal tree ${\rm T}_{\rm N}$ defines natural monomorphisms from $\pi_1(\Ng_1,\tl{\x}_1),\ldots,\pi_1(\Ng_p,\tl{\x}_p)$ into $H=\pi_1(\Ng,\tl{\x})$.
Moreover $H$ splits as the free product $\pi_1(\Ng,\tl{\x}_1)\ast\cdots\ast\pi_1(\Ng_p,\tl{\x}_p)*F_n$, for $F_n$   a free group of finite rank. Note that $\Ng_1,\ldots,\Ng_p$ are graphs of groups related to the decompositions of the $\widehat{\N}_1$,$\ldots,\widehat{\N}_p$ along the tori appearing in  Lemma \ref{JSJ}, and that the images of $\pi_1(\Ng_1,\tl{\x}_1),\ldots,\pi_1(\Ng_p,\tl{\x}_p)$ into $H=\pi_1(\Ng,\tl{\x})$ coincide up to conjugacy with those induced by the inclusions of $\N_1,\ldots,\N_p$ into $\N$.

One obtains from $u$ a cyclically reduced sequence with respect to the free product. If its length is positive then $Z_H(u)$ is infinite cyclic, the case (i) occurs; otherwise $Z_H(u)$ lies in one of the free product factors (\cite[Corollary 4.1.6]{mks}). In the latter case it follows from \cite[Theorem VI.1.6]{js} and \cite[characteristic pair Theorem]{js} that exactly one of the assertions (i), (ii) or (iii) occurs; this proves the first assumption.\smallskip

We now return to the algorithm: if $u$ passes through an edge in ${\rm EN}_0$, then the case $(i)$ occurs and otherwise $Z_H(u)$ is included  in some factor, say $\pi_1(\Ng_1,\tl{\x_1})$, that one can decide from $u$. Suppose in the following that $u$ lies in $K\triangleq\pi_1(\Ng_1,\tl{\x}_1)$, and write $u$ as a cyclically reduced $(\Ng_1,\tl{\x}_1)$-loop.

 The
algorithm is constructed on procedures and arguments stated in \cite{cp3mg}, which apply here since $\Ng_1$ is related to the tori decomposition of  the  orientable irreducible geometrisable 3-manifold $\widehat{\N}_1$. 
Suppose that $u$ is conjugate in $K$ to an element $u'$ in a vertex subgroup $H_{\v'}$, for some $\v'\in {\rm VN}_1$. According to \cite[Theorem 3.1]{cp3mg}, since $u$ is cyclically reduced  one of the following cases occurs in $K=\pi_1(\Ng_1,\tl{\x}_1)$:
\begin{itemize}
\item[(i)] $u$ lies in $H_{\v'}$ and $u$, $u'$ are conjugate in $H_{\v'}$, or
\item[(ii)] $u$ lies in a vertex subgroup $H_{\v}$, $\v\in {\rm VN}_1$, and there is a sequence $(c_1,\ldots,c_n)$ of elements of edge subgroups such that $u$ is conjugate to $c_1$ in $H_{\v}$, $u'$ is conjugate to $c_n$ in $H_{\v'}$ and for any $i=1\ldots n-1$, either $(\e,c_i,\ol{\e})=c_{i+1}$ for some $\e\in {\rm EN}_1$ or $c_i$ and $c_{i+1}$ are conjugate in some vertex subgroup.   
\end{itemize}

Let's return to the algorithm;
 given  a cyclically reduced $(\Ng_1,\tl{\x}_1)$-loop $u$ one can decide whether  $u$ lies  in a vertex subgroup. If not then with the above, $u$ is not conjugate to a vertex subgroup and assertion (i) occurs: $Z_H(u)$ is infinite cyclic.
So in the following we will suppose that  $u$ lies in a vertex subgroup $H_{\v}$  of $K$, for some $\v\in {\rm EN}_1$.\smallskip

First consider the particular case where $\widehat{\N}_1$ is a $\TT^2$-bundle over $\SS^1$ modeled on $Sol$ (cf. Theorem 5.3, \cite{scott}). This occurs when ${\rm N}_1$ is a cycle with one or two vertices (resp. edges) and all vertex and edge groups are free abelian with rank 2. In such case  $K$ splits as $(\ZZ)\rtimes_\theta\Z$ for some $\theta\in SL_2\Z$ Anosov and the left factor coincides with all vertex and edge subgroups of $K$. It follows easily from the fact that $\theta$ has no eigenvalue with modulus 1 that the centralizer of any element in $K$ is either infinite cyclic or consists in the whole left factor $\ZZ$.  Hence assertion (ii) occurs.\smallskip

Now consider the remaining cases when $\widehat{\N}_1$ is not a $\TT^2$-bundle over $\SS^1$ modeled on $Sol$.
Using the Seifert invariants obtained by the algorithm $\mathfrak{Top4}$ (Lemma \ref{seifertinvariant}) one  decides which vertex subgroup is a Seifert subgroup and among them which comes from a $\TT^2\times \I$ piece (those with basis an annulus and no exceptional fiber); note that the latter correspond to vertex subgroups which are free abelian with rank 2  (Theorem 10.5, \cite{hempel}).
Use the following process to find all elements in vertex subgroups conjugate to $u$ in $H_{\v}$:\\
-- If $H_{\v}$ is not a Seifert vertex subgroup; then according to \cite[Proposition 4.1]{cp3mg}, $u$ is conjugate in $H_{\v}$ to at most 1 element  lying in at most one edge subgroup, that using  \cite[Theorem 6.3]{cp3mg}, one finds together with a conjugating element in $H_{\v}$.\\
-- If $H_\v\simeq \ZZ$;  $u$ lies both in the two edge subgroups and is not conjugate in $H_\v$ to any other element.\\
--  If $H_\v$ is  a Seifert vertex subgroup and $H_\v\not\simeq\ZZ$. According to  \cite[Proposition 4.1]{cp3mg}, 
either $u$ is conjugate in $H_\v$ to at most 1 element lying in at most one edge subgroup, or $u$ lies in a {\sl fiber} of $H_\v$, {\it i.e.}  is a power of a regular fiber in a Seifert  fibration of the corresponding piece and $u$ lies in the intersection of all edge subgroups  in $H_\v$. One decides using  \cite[Proposition 5.1]{cp3mg} (note also that deciding whether $u$ lies in a fiber of $H_\v$  is easily done by checking with a solution to the word problem whether for all generators $s$ of $H_\v$, $sus^{-1}=u^{\pm 1}$, cf.  \cite[Lemma II.4.2.(i)]{js}).

Pursue the process with the successive conjugates in the edge subgroups obtained, the acylindricity of $\Ng_1$ (\cite[Lemma 4.1]{cp3mg}) ensures that it finally stops and one finally obtains a finite list of all elements in vertex and edge subgroups to which $u$ is conjugate, as well as conjugating elements.

The minimality of the JSJ decomposition of $\widehat{\N}_1$ ensures that $u$ is neither conjugate to the fibers of two Seifert vertex subgroups $\not\simeq\ZZ$ nor to the fibers of two Seifert vertex subgroups $\simeq\ZZ$. Then with the above, together with  \cite[Theorem VI.I.6]{js}, one finally obtains exactly one of the following cases:
\begin{itemize}
\item[--] $u$ is conjugate neither to a Seifert vertex subgroup nor to an edge subgroup: $Z_H(u)\simeq\Z$ and assertion (i) occurs.\smallskip
\item[--] $u$ is conjugate to a Seifert vertex subgroup $H_{\v'}$ and is not conjugate to a fiber of any Seifert vertex subgroup $\not\simeq\ZZ$. In that case $Z_H(u)$ lies up to conjugacy in $H_{\v'}$ and  assertion (ii) occurs.\smallskip
\item[--] $u$ is conjugate to the fiber of a Seifert vertex subgroup $H_{\v'}\not\simeq \ZZ$; $Z_H(u)$ lies up to conjugacy in $H_{\v'}$ and assertion (ii) occurs.\smallskip
\item[--] $u$ does not lie in the conjugate of any Seifert vertex subgroup but lies in the conjugate of an edge subgroup $H_\e$; $Z_H(u)=H_\e$ and assertion (iii) occurs.
\end{itemize}
This achieves the proof.
\end{proof}
\subsection{Step 4: The  conjugacy algorithm.}
 We construct in this section the algorithm that solves conjugacy problem in $G$. Whenever  $u=hvh^{-1}$  we shall use the notation $u=v^h$ or $u\sim v$.\smallskip

{\noindent\sl$\bullet$\ \begin{minipage}[t]{\largeur}
Suppose that $u$ and $v \in G$ are given by a couple of $(\Mg,\x)$-loops and one wants to decide whether $u\sim v$ in $G$.
\medskip
\end{minipage}
 }

 First use the solution $\mathfrak{GWP}(H,G)$ (Lemma \ref{reduce}.(ii)) to the generalized word problem of $H$ in $G$ to decide whether $u,v$ lie in $H$ other
 not.\smallskip

 {\noindent\sl$\bullet$ If either $u$ or $v$ lies in $H$.}\smallskip

\indent
 Since
$H$ has index 2 in $G$, if  $u$ and $v$ lie in different classes of $H/G$ they are definitely not conjugate in $G$. If
$u$ and $v$ both lie in $H$, then the solution $\mathfrak{CP}(H)$ (Lemma \ref{dehnpbm}.(i)) to the conjugacy problem in $H$ together with the following lemma  allow to
decide whether $u$ and $v$ are conjugate in $G$.
%
\begin{lem}[Algorithm $\mathfrak{CP1}(K)$]
\label{ld1}
 Let $K$ be a group and $L$  an index 2 subgroup of $K$ with solvable conjugacy problem.
 Given any couple of elements $u,v\in L$ one can decide whether $u$ and $v$ are conjugate in $K$.
\end{lem}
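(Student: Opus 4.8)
The plan is to exploit the fact that, having index $2$, the subgroup $L$ is automatically normal in $K$, so that conjugation by a fixed element outside $L$ carries $L$ back into itself. This reduces the conjugacy question in $K$ to two conjugacy questions in $L$, each of which is settled by the assumed solution to the conjugacy problem in $L$.

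First I would fix once and for all an element $t\in K\setminus L$; one exists because $[K:L]=2$, and in the intended application $K=G$, $L=H$ such a $t$ is produced by the membership test $\mathfrak{GWP}(H,G)$. Since $L$ is normal, the conjugate $t^{-1}ut$ again lies in $L$ and can be written down as a word, so that the solution to the conjugacy problem in $L$ is applicable to it. Next I would analyze an arbitrary putative conjugator using the coset decomposition $K=L\sqcup tL$. If $u=v^h$ for some $h\in K$, then either $h\in L$ or $h=t\ell$ with $\ell\in L$. In the first case $u$ and $v$ are conjugate in $L$. In the second case $u=t\,\ell v\ell^{-1}\,t^{-1}$, whence $t^{-1}ut=\ell v\ell^{-1}$, which says precisely that $t^{-1}ut$ and $v$ are conjugate in $L$. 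Both implications reverse: if $u$ is conjugate to $v$ in $L$, or if $t^{-1}ut=\ell v\ell^{-1}$ for some $\ell\in L$ (so that $u=(t\ell)\,v\,(t\ell)^{-1}$ with $t\ell\in K$), then $u$ is conjugate to $v$ in $K$. Hence $u\sim v$ in $K$ if and only if at least one of the statements ``$u$ conjugate to $v$ in $L$'' or ``$t^{-1}ut$ conjugate to $v$ in $L$'' holds.

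The algorithm then simply runs the solution to the conjugacy problem in $L$ on the two pairs $(u,v)$ and $(t^{-1}ut,\,v)$, and answers affirmatively exactly when one of the two runs does. I do not expect any real obstacle: the only points requiring (trivial) care are the normality of an index-$2$ subgroup, which guarantees $t^{-1}ut\in L$ so that the conjugacy algorithm in $L$ can be invoked, and the elementary ability to exhibit $t$ and compute the word $t^{-1}ut$. The entire content of the lemma is the two-coset decomposition $K=L\sqcup tL$ together with the observation that the conjugator, if it exists, may be taken in $L$ or in $tL$.
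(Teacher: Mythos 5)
Your proof is correct and is essentially the paper's own argument: the paper picks coset representatives $a_0=1,a_1$ of $K/L$ and tests whether $u$ is conjugate in $L$ to $a_i v a_i^{-1}$, which is the mirror image of your test of the pairs $(u,v)$ and $(t^{-1}ut,v)$ via the decomposition $K=L\sqcup tL$. Your version is just slightly more explicit about normality of $L$ and the two-coset case analysis, which the paper leaves implicit.
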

\begin{proof} Given a set of representative $a_0=1,a_1$ of $L /K$, in order to decide whether $u,v\in L$ are
conjugate in $K$ it suffices to check whether $u$ is conjugate in $L$ to any of the $a_iva_i^{-1}$ for $i=0,1$.
\end{proof}

 {\noindent\sl$\bullet$ In the following both $u$ and $v$ lie in $G\smallsetminus H$ and are supposed to be cyclically reduced.}\medskip

\indent
This can be done by applying the algorithm in Lemma \ref{reduce}.(i) to change $u$ and $v$ into two cyclically reduced $(\Mg,\x)$-loops, respective conjugates of $u$, $v$ in $G$.\smallskip

Decide whether $u$, $v$ have order 2; according to \cite{serre} it occurs when $u,v$ lie in vertex subgroups of $G$, so use a solution to the word problem in those vertex subgroups (Lemma \ref{dehnpbm}.(iv)) to check whether $u^2=1$, $v^2=1$ (or $\mathfrak{WP}(G)$, Lemma \ref{dehnpbm}.(ii)).
 If exactly one of the
relations occurs then $u$ and $v$ are not conjugate in $G$.\medskip
\noindent
{\sl$\bullet$ If both $u$ and $v$ have order 2.}\medskip

\indent
 In such case the following lemma
allows to decide whether $u$ and $v$ are conjugate other not.

\begin{lem}[Algorithm $\mathfrak{CP2}(G)$]
\label{ordre2} One can  decide for any pair
of order 2 elements $u, v\in G$ whether $u$ and $v$ are conjugate
in $G$.
\end{lem}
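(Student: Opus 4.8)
The plan is to invoke the classical conjugacy criterion for elements lying in vertex subgroups of a graph of groups --- precisely the criterion quoted as Theorem 3.1 of \cite{cp3mg} in the proof of Lemma \ref{centralizer}, which is purely group-theoretic and so applies verbatim to $\Mg$ --- and to observe that for \emph{order $2$} elements it collapses to a finite computation. As already established above, since $u,v$ have order $2$ and have been cyclically reduced, by \cite{serre} they lie in vertex subgroups, and one reads off vertices $\v_1,\v_2$ with $u\in G_{\v_1}$ and $v\in G_{\v_2}$. The criterion then states that $u\sim v$ in $G$ if and only if either $\v_1=\v_2$ and $u,v$ are conjugate in $G_{\v_1}$, or there is a finite sequence $c_1,\ldots,c_n$ of elements of edge subgroups with $u$ conjugate to $c_1$ in $G_{\v_1}$, $v$ conjugate to $c_n$ in $G_{\v_2}$, and for each $i$ either $(\e,c_i,\ol\e)=c_{i+1}$ for some edge $\e$ or $c_i,c_{i+1}$ conjugate in a common vertex subgroup.

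The decisive point I would exploit is that an involution can lie in an edge subgroup only when that edge subgroup is a $\Z_2$-edge subgroup. Indeed the edge groups of $\Mg$ are $\Z_2$, $\ZZ$ or $\pK=\Z\rtimes\Z$, and the latter two are torsion-free (the Klein bottle group $\pK$ being a torsion-free surface group), so they contain no involution. Since each $\Z_2$-edge subgroup contributes a single nontrivial element, every intermediate term $c_i$ of the sequence ranges over the finite, explicitly computable set $E$ of nontrivial elements of the $\Z_2$-edge subgroups of $G$, realised in $\pi_1(\Mg,\x)$ through the fixed maximal tree ${\rm T}_{\rm M}$. This is what turns the a priori unbounded search over intermediate edge elements into a search in a finite object.

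Accordingly I would build a finite graph $\Gamma$ with vertex set $E$, joining $c,c'\in E$ whenever they are the two images $(\e,\cdot,\ol\e)$ of the same $\Z_2$-edge on its two sides, or they are conjugate inside a common vertex subgroup of $G$; the latter is decidable by the conjugacy algorithm $\mathfrak{CP}(G(\v))$ of Lemma \ref{dehnpbm}.(iv). Using the same algorithm I would compute $S_u=\{c\in E : c\ \text{conjugate to}\ u\ \text{in}\ G_{\v_1}\}$ and the analogous $S_v$. The conclusion, read straight off the criterion, is that $u\sim v$ in $G$ if and only if either $\v_1=\v_2$ and $u,v$ are conjugate in $G_{\v_1}$ (again checked by $\mathfrak{CP}(G(\v))$), or some element of $S_u$ and some element of $S_v$ lie in one connected component of $\Gamma$. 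All steps are algorithmic and $\Gamma$ is finite, so connectivity is computable and conjugacy is decided.

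The main obstacle is the reduction carried out in the second paragraph: one must verify rigorously that torsion can be transported only along $\Z_2$-edges, as this is exactly what replaces the general (unbounded) conjugacy criterion by a finite graph search. The remaining delicate-but-routine work is the faithful realisation of the abstract edge involutions and of the transfer maps $(\e,\cdot,\ol\e)$ through the maximal tree ${\rm T}_{\rm M}$, together with the bookkeeping matching the conjugacy tests to the instances of $\mathfrak{CP}(G(\v))$ furnished by Lemma \ref{dehnpbm}.(iv); termination, by contrast, is immediate.
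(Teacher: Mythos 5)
Your proposal is correct, but it takes a genuinely different route from the paper. The paper's proof is topological: citing \cite{epsteinz2}, \cite{stalling} and \cite{swarup}, it asserts that every order-$2$ element of $G$ is conjugate to the generator of some $\Z_2$-edge subgroup (these arise from the essential, pairwise non-parallel projective planes of $\Pi$) and, crucially, that distinct $\Z_2$-edge subgroups are \emph{pairwise non-conjugate} in $G$. Its algorithm is then a one-step test: conjugate $u$ and $v$ inside their vertex subgroups onto $\Z_2$-edge generators using $\mathfrak{CP}(G_\v)$, $\mathfrak{CP}(G_{\v'})$, and declare them conjugate if and only if they land on the same edge or on an opposite pair $\e$, $\ol{\e}$. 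You instead stay purely group-theoretic: you invoke the general conjugacy criterion for graphs of groups (Theorem 3.1 of \cite{cp3mg}, which is indeed a statement about arbitrary graphs of groups and so applies to $\Mg$), observe correctly that the torsion-freeness of the $\ZZ$- and $\Z\rtimes\Z$-edge groups forces every intermediate chain element $c_i$ (whose order is $2$, being preserved by the edge isomorphisms and by conjugation) into the finite set $E$ of $\Z_2$-edge involutions, and decide by connectivity in a finite graph $\Gamma$ whose edges are computable from $\mathfrak{CP}(G(\v))$ (Lemma \ref{dehnpbm}.(iv)) and from the given edge monomorphisms; soundness holds because each chain move preserves the $G$-conjugacy class, and completeness is exactly the criterion. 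What each approach buys: yours needs no $3$-manifold input beyond the algorithms already established and works verbatim for any graph of groups with solvable conjugacy in vertex groups and torsion confined to finite, explicitly given edge groups; the paper's topological shortcut buys brevity and a sharper structural fact --- Swarup's non-conjugacy result says in effect that your graph $\Gamma$ has no edges joining involutions attached to distinct edge-pairs, so each component is just a pair $\{\phi_\e(g),\phi_{\ol{\e}}(g)\}$ and the connectivity search you perform is, in this particular setting, degenerate. Both proofs ultimately rest on the same decidable primitive, conjugacy in the vertex subgroups of $G$.
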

\begin{proof}  Recall  the system $\cal P$ of essential projective planes in $\M$ as in \S \ref{section:topological} ; they are necessarily 
pairwise non parallel. It follows from \cite{epsteinz2}, \cite{stalling}, \cite{swarup} that  each order 2 element in
$G$ is conjugate to some $\Z_2$-edge subgroup of $G$ and that all $\Z_2$-edge subgroups are pairwise non conjugate in $G$.

Let $u$, $v$ be cyclically reduced element of order 2 lying in respective vertex subgroups $G_\v$, $G_{\v'}$.  According to \cite{epsteinz2} (or \cite[Theorem 9.8.(i)]{hempel}) and   \cite[Proposition 2.2]{swarup}, $u$ and $v$ are necessarily conjugate in $G_\v$ and $G_{\v'}$ to the generators of the $\Z_2$-edge subgroups. One decides so using the solutions $\mathfrak{CP}(G_\v)$, $\mathfrak{CP}(G_{\v'})$ (Lemma \ref{dehnpbm}.(iv)) to the conjugacy problems in $G_\v$ and $G_{\v'}$.  Then $u$, $v$ are conjugate in $G$ if and only if they are 
conjugate to the non-trivial element in a same $\Z_2$-vertex subgroup, or to the non-trivial elements in two $\Z_2$-vertex subgroups coming from opposite edges $\e$, $\ol{\e}\in {\rm EM}$.
\end{proof}

 We will be concerned in the following with
the remaining case: $u,v$ both lie in $G\smallsetminus H$ and both have order different than 2. According to
\cite{epsteinz2} both $u$ and $v$ must have infinite order in $G$.\medskip

{\noindent\sl$\bullet$\ \begin{minipage}[t]{\largeur}
In the following both $u$ and $v$ lie in $G\smallsetminus H$ and have infinite order.\smallskip
\end{minipage}
 }

 Use algorithm $\mathfrak{CP_1}(G)$ (Lemma \ref{ld1})  to decide whether $u^2\sim v^2$ in
$G$ and find, if any, $k\in G$ that conjugates $u^2$ into $v^2$; if such $k\in G$ does not exist then $u$, $v$ are not conjugate in $G$. So we suppose in the following that $u^2\sim v^2$ in $G$ and we are given an element $k\in G$ such that $u^2=(v^2)^k$ in $G$; such a conjugating element is implicitly provided (going into the
lines of the proof) by the conjugacy algorithm in \cite{cp3mg}.\smallskip

 {\noindent\sl$\bullet$\ \begin{minipage}[t]{\largeur} In the following $u^2$ and $v^2$ are supposed to be conjugate in $G$ and we are given $k\in G$ such that
 $u^2=(v^2)^k$.\smallskip
\end{minipage}
}

We first need to fix some notations which will be useful in the following.  Denote by
$Z_G(v)=\{u\in G\ |\ uv=vu\}$ the centralizer of $v$ in $G$ and by $C_G(u,v)=\{k\in G\ |\ u=v^k\}$. The subset $C_G(u,v)$
of $G$ is either empty (when $u\not\sim v$) or equal to $k.Z_G(v)$ for any $k\in G$ such that $u=v^k$.
 The set $C_G(u^2,v^2)=k.Z_G(v^2)$ is non empty.
 It obviously
 contains the set
$C_G(u,v)$; note also that $Z_G(v^2)$ contains $Z_G(v)$ as a subgroup as well as $Z_H(v^2)$ as an index 2 subgroup;
$Z_G(v^2)$ is generated by $Z_H(v^2)$ and $v$.\smallskip

We are now interested in the centralizer $Z_H(v^2)$ of $v^2$ in $H$. Apply the algorithm $\mathfrak{ZP}(H)$ (Lemma \ref{centralizer}) to check
whether it
is infinite cyclic or  conjugate into a  Seifert vertex subgroup or to a $\ZZ$-edge subgroup of $H$. It provides, if any, the Seifert vertex subgroup $H_\v$ or edge subgroup $H_\e$ of $H$ and the conjugating element $h\in H$ such that $hZ_H(v^2)h^{-1}$ lies in $H_\v$ or $H_\e$. \medskip

{\noindent\sl$\bullet$\ \begin{minipage}[t]{\largeur} Check whether $Z_H(v^2)$ is infinite cyclic or is conjugate to a $\ZZ$-edge subgroup or into a Seifert vertex subgroup of $H$.\\
\end{minipage}}

\noindent We further treat separately the former case and the two latter cases. \medskip

{\noindent\sl$\bullet$ Case (i): $Z_H(v^2)$ is infinite cyclic.}\medskip

In that case, $Z_G(v^2)$ contains $\Z$ as an index 2
subgroup. If $Z_G(v^2)$ is torsion-free then it must be cyclic, say $Z_G(v^2)=<w>$. But since $v\in Z_G(v^2)$,  $v$ is a
power of $w$ so that $w\in Z_G(v)$, and since $Z_G(v)\subset Z_G(v^2)$, it implies that $Z_G(v)=Z_G(v^2)=<w>$. If
$Z_G(v^2)$ is not torsion-free, let us denote by $t$ a generator of its index 2 subgroup $Z_H(v^2)$. The group $Z_G(v^2)$ is
generated by $v$ and $t$ and must be one of the two groups appearing in the following lemma.
\begin{lem}[Groups with torsion containing $\Z$ as an index 2 subgroup]\label{l1}
A group $K$ with torsion and generators $v,t$, such that $<t>\simeq\Z$ has index 2 in $K$ must be one of:
$$<v,t\ |\ [v,t]=1, v^2=t^{2n}>\simeq \Z\oplus\Z_2$$
$$<v,t\ |\ t^v=t^{-1}, v^2=1>\simeq \Z_2 *\Z_2$$
\end{lem}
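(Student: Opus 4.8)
The plan is to exploit the fact that an index-$2$ subgroup is automatically normal, so that $C=\langle t\rangle$ is normal in $K$ with $K/C\simeq\Z_2$. First I would pick any $v\in K\setminus C$ (the generator named in the statement works, since $K$ is generated by $v,t$ and $v\notin C$). Because $vC$ is the nontrivial coset of the order-$2$ quotient, $v^2$ lands in the trivial coset, so $v^2\in C$ and I may write $v^2=t^m$ for some $m\in\Z$. Normality of $C$ forces conjugation by $v$ to restrict to an automorphism of $C\simeq\Z$, and since $\mathrm{Aut}(\Z)=\{\pm1\}$ we get $vtv^{-1}=t^{\varepsilon}$ with $\varepsilon\in\{+1,-1\}$. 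The whole argument then splits on this sign.

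In the case $\varepsilon=+1$ the group $K$ is abelian, generated by the commuting elements $t,v$ subject only to $v^2=t^m$. I would identify it with the cokernel $\Z^2/\langle(-m,2)\rangle$, writing the group additively with coordinates the exponents of $t$ and $v$. An elementary-divisor (Smith normal form) computation shows this quotient is $\Z\oplus\Z_{\gcd(m,2)}$, hence it has torsion exactly when $m$ is even. Writing $m=2n$ and setting $s=vt^{-n}$, one checks $s^2=v^2t^{-2n}=1$ and $[s,t]=1$, which exhibits $K=\langle t\rangle\oplus\langle s\rangle\simeq\Z\oplus\Z_2$ and recovers the first presentation; when $m$ is odd $K\simeq\Z$ is torsion-free and is ruled out by the hypothesis that $K$ has torsion.

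In the case $\varepsilon=-1$ I would first observe that $v^2$ is central: it commutes with $v$, and $v^2tv^{-2}=v t^{-1}v^{-1}=t$. Conjugating the relation $v^2=t^m$ by $v$ then gives $t^m=v t^m v^{-1}=t^{-m}$, so $t^{2m}=1$; as $t$ has infinite order this forces $m=0$, i.e. $v^2=1$. Thus $K=\langle v,t\mid v^2=1,\ vtv^{-1}=t^{-1}\rangle$ is the infinite dihedral group, isomorphic to $\Z_2*\Z_2$ (send the two free factors to $v$ and $vt$), which is the second presentation.

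To close, I would note that both listed groups genuinely have torsion, so the dichotomy is complete. The only slightly delicate point is the abelian case: one must track that it is precisely the parity of $m$ that governs the appearance of torsion, and that odd $m$ collapses $K$ to $\Z$ rather than yielding a new torsion group. The non-abelian case is by contrast rigid, since centrality of $v^2$ immediately kills the parameter $m$ and forces $v^2=1$.
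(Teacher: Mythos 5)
Your proof is correct and follows essentially the same route as the paper: both arguments use normality of the index-2 subgroup $\langle t\rangle$ to get $vtv^{-1}=t^{\pm 1}$ and $v^2=t^m$, then split on the sign of the action. The only mild difference is in how torsion enters: the paper first extracts an involution of the form $t^{-n}v$ from the torsion hypothesis and uses it in both branches, whereas you invoke torsion only to exclude odd $m$ in the abelian branch (where $K$ would collapse to $\Z$) and show, via centrality of $v^2$, that the branch $vtv^{-1}=t^{-1}$ forces $v^2=1$ with no torsion assumption at all --- a slightly sharper observation than what the paper records.
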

\begin{proof}  The group $K$ admits as presentation $<v,t\ |\ t^v=t^{\pm1}, v^2=t^p>$ for
some $p\in\Z$. The set $K\smallsetminus <t>$  contains an element $w$ with finite order $m\not=0$. In particular $w^m$ lies
in the index 2 subgroup $<t>$ so that $m$ must be even; hence $K$ contains an element $t^{-n}v$ with order 2, for some
$n\in\Z$.
Suppose first that $t^v=t$, so that $1=(t^{-n}v)^2=t^{-2n+p}$. It follows that $p=2n$ and one obtains the first
presentation.
Suppose then that $t^v=t^{-1}$; one has $1=(t^{-n}v)^2=v^2$ and one obtains the  second presentation.
\end{proof}

The latter group cannot occur since $v$ has infinite order. Concerning the former group, since $[v,t]=1$, one has
$Z_G(v)= Z_G(v^2)$. Hence whenever  $Z_H(v^2)$ is infinite cyclic then $Z_G(v)=Z_G(v^2)$ and the following lemma allows
us to decide whether $u\sim v$ in $G$.

\begin{lem} [Algorithm $\mathfrak{CP3}(K)$]
\label{ld2} Let $K$ be a group and $L$ be an index 2 subgroup of $K$. Suppose that $L$ has a
solvable conjugacy problem. Let $v\in K\smallsetminus L$ such that
 $Z_K(v)=Z_K(v^2)$. Then  one can decide for any $u\in K$ whether $u$ and $v$ are conjugate in $K$.
\end{lem}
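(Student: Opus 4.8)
The plan is to exploit that conjugation respects the index-$2$ (hence normal) splitting $K=L\sqcup vL$: conjugation preserves cosets, so every $K$-conjugate of $v$ again lies in $K\setminus L$, while for every $u\in K$ one has $u^2\in L$ and $v^2\in L$. The strategy is first to test conjugacy of the \emph{squares}, a test that can be pushed entirely into $L$, and then to use the hypothesis $Z_K(v)=Z_K(v^2)$ to upgrade a conjugacy of squares into an actual matching of $u$ with one specific conjugate of $v$.

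First I would apply algorithm $\mathfrak{CP1}(K)$ (Lemma \ref{ld1}) to the pair $u^2,v^2\in L$ to decide whether $u^2\sim v^2$ in $K$, and if so to produce $k\in K$ with $u^2=(v^2)^k$. Both squares lie in $L$, so $\mathfrak{CP1}(K)$ applies, and a conjugating element is produced by the underlying solution to the conjugacy problem in $L$. (One may even take $k\in L$: since $v$ centralises $v^2$, a conjugator of the form $\ell v$ with $\ell\in L$ gives $\ell v^2\ell^{-1}$, so $u^2\sim_K v^2$ is equivalent to $u^2\sim_L v^2$.) If no such $k$ exists, then $u^2\not\sim v^2$, hence $u\not\sim v$, and the algorithm answers negatively.

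The core step is the claim that, once $k$ satisfies $u^2=(v^2)^k$, one has $u\sim v$ in $K$ \emph{if and only if} $u=v^k$. Indeed, if $u=v^g$ for some $g\in K$, then $u^2=(v^2)^g=(v^2)^k$, so $k^{-1}g\in Z_K(v^2)=Z_K(v)$; thus $k^{-1}g$ commutes with $v$ and $v^g=v^k$, i.e. $u=v^k$. The converse is immediate. This is exactly where the hypothesis $Z_K(v)=Z_K(v^2)$ enters: without it, $k^{-1}g$ could lie in $Z_K(v^2)\setminus Z_K(v)$ and produce conjugates of $v$ distinct from $v^k$. It therefore remains only to decide the single equality $u=v^k$ in $K$, which is an instance of the word problem in $K$; this is available in our setting, and in general it follows from the word problem in $L$ together with the membership test of $L$ in $K$, since $u(v^k)^{-1}$ is immediately nontrivial whenever $u$ and $v^k$ fall in different cosets of $L$.

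The only genuine obstacle is conceptual rather than computational: recognising that squaring reduces the whole question to conjugacy in $L$, and that the centralizer condition rigidifies the remaining ambiguity down to the single candidate $v^k$. The case $u\in L$ requires no separate treatment, for then $v^k\in K\setminus L$ forces $u\neq v^k$, and the final equality test returns the correct negative answer automatically.
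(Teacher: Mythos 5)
Your proof is correct and follows essentially the same route as the paper's: both first decide $u^2\sim v^2$ via Lemma \ref{ld1}, extract a conjugator $k$, and then use the hypothesis $Z_K(v)=Z_K(v^2)$ to show that any conjugacy $u=v^g$ forces $g\in kZ_K(v)$ and hence $u=v^k$, reducing everything to a single word-problem check (the paper phrases this via $C_K(u,v)\subset C_K(u^2,v^2)=kZ_K(v^2)$, you via the element $k^{-1}g$, which is the same computation). Your side remarks --- that one may take $k\in L$, and that the case $u\in L$ is handled automatically by the final equality test --- are harmless refinements rather than a different argument.
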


\begin{proof} Since $L$ has solvable conjugacy problem, $L$ has solvable word problem, and hence $K$ also has solvable word problem. Let $v\in K$ be as above, and suppose one wants to decide for some given $u\in K$ whether $u
\sim v$ in $K$.
  With Lemma \ref{ld1} one can decide whether $u^2$ and $v^2$ are conjugate in $K$. If not
   then $u$ and $v$ are definitely not conjugate in $K$. So suppose that
    $u^2=kv^2k^{-1}$ for some $k\in K$ that one can effectively find using a solution to the word problem in $K$
    (in our purpose $k$ is provided by the solution of \cite{cp3mg} to conjugacy in $H$),
    so that $C_K(u^2,v^2)=kZ_K(v^2)$.
    Obviously $C_K(u,v)\subset C_K(u^2,v^2)$ and moreover since
    $Z_K(v^2)=Z_K(v)$,
if $C_K(u,v)$ is non-empty it must equal $C_K(u^2,v^2)$. Hence to decide whether $u$ and $v$ are conjugate in $K$ it
suffices to decide using the solution to the word problem in $K$ whether $u=v^k$ other not.\smallskip
\end{proof}

{\noindent\sl$\bullet$\ \begin{minipage}[t]{\largeur}
 Case (ii) and (iii): $Z_H(v^2)$ is conjugate in $H$ to a subgroup of a  Seifert vertex subgroup or to a $\ZZ$-edge subgroup of $H$.\medskip
\end{minipage}
 }

Let $h\in H$ be given by the algorithm $\mathfrak{ZP}(H)$ (Lemma \ref{centralizer}) and such that  $hZ_H(v^2)h^{-1}$ lies into a Seifert vertex subgroup $H_\v$ or an edge subgroup $H_\e$ of $H$.
Let $G_\v\supset H_\v$ and $G_\e\supset H_\e$ be the corresponding Seifert vertex subgroup or edge subgroup in $G$. Since $hZ_H(v^2)h^{-1}$, and in particular $hv^2h^{-1}$, lie in $G_\v$ or $G_\e$ one may expect that $hvh^{-1}$ also lies in $G_\v$, $G_\e$. This turns to be false in general, though only in specific cases.



%

\begin{lem}[Square root of an element of $G$ lying in a vertex or edge subgroup]\label{squareroot}
Let $v$ be a cyclically reduced element of $G\smallsetminus H$ with infinite order and $h\in H$, be such that $Z_H(v^2)^h$ lies in a vertex or edge subgroup of $G$. Then either:
\begin{itemize}
\item[(i)] $Z_H(v^2)^h$ and $v^h$ both lie in a same vertex subgroup or edge subgroup of $G$, or
\item[(ii)] $v$ lies in a vertex subgroup  which contains $\ZZ$ as an index 2 subgroup, and does not lie in  any edge subgroup.
\end{itemize} 
\end{lem}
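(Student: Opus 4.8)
The plan is to pass to the Bass--Serre tree of $\Mg$ and analyse where the orientation-reversing square root $v$ sits relative to its square. Set $w=v^h$. Since $v\in G\setminus H$ we have $v^2\in H$ and $v^2\in Z_H(v^2)$; as $Z_H(v^2)^h$ lies in a (Seifert) vertex subgroup $G_\v$ or a $\ZZ$-edge subgroup $G_\e$ of $G$, the element $w^2=(v^2)^h$ lies in $G_\v$ (resp.\ $G_\e$), hence fixes a vertex (resp.\ an edge) of the Bass--Serre tree $\mathbb{T}$ of $\Mg$, on which $G$ acts without inversion (cf.\ \cite{serre,bass}). In a tree the translation length satisfies $\ell(w^2)=2\,\ell(w)$, and $w^2$ is elliptic, so $\ell(w)=0$ and $w$ (hence $v$) is elliptic.

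The core is a fixing/folding dichotomy. Let $S$ be the vertex $\tl v$ (resp.\ edge $\tl e$) fixed by $w^2$ with $\mathrm{Stab}(S)=G_\v$ (resp.\ $G_\e$). Either $w$ fixes $S$, in which case $w\in G_\v$ (resp.\ $w\in G_\e$) and, together with $Z_H(v^2)^h\subseteq G_\v$ (resp.\ $G_\e$), conclusion (i) holds; or $w$ moves $S$ while $w^2$ fixes it. In the latter case absence of inversions is used: if $S=\tl e$ then $w$ cannot exchange the endpoints of $\tl e$, so $w$ fixes an endpoint $\tl v$ and carries $\tl e$ to a distinct incident edge; if $S=\tl v$ then, since $\mathrm{Fix}(w)\subseteq\mathrm{Fix}(w^2)$ is nonempty, the midpoint of $[\tl v,w\tl v]$ is a vertex $\tl m$ fixed by $w$, which exchanges the two initial edges of that geodesic. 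In every such folding situation one obtains a vertex $\tl m$ with $w\in G_{\tl m}$, two distinct incident edges $\tl e_1,\tl e_2=w\tl e_1$ exchanged by $w$, and $w^2\in G_{\tl e_1}$ (as $w^2$ fixes $[\tl m,\tl v]\supseteq\tl e_1$); thus conjugation by $w$ carries the edge subgroup $G_{\tl e_1}$ onto the distinct incident edge subgroup $G_{\tl e_2}$ while $w^2\in G_{\tl e_1}$.

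It remains to identify the folding case with conclusion (ii), i.e.\ that $G_{\tl m}$ contains $\ZZ$ with index $2$ and that $v$ avoids all edge subgroups. Here $w\in G_{\tl m}$ is orientation-reversing with $w^2$ an infinite-order element of an incident (peripheral) edge subgroup. I first rule out hyperbolic pieces: an orientation-reversing isometry $w$ of $\H^3$ with $w^2$ parabolic must preserve $\mathrm{Fix}(w^2)$, the single point fixed by $w^2$, so $w$ fixes it and cannot exchange two cusps --- contradicting that $w$ exchanges $G_{\tl e_1}$ and $G_{\tl e_2}$. Hence $G_{\tl m}$ comes from a non-orientable Seifert piece, and using the Seifert invariants (Lemma \ref{seifertinvariant}) and the fibre structure, an orientation-reversing element whose square is a peripheral infinite-order element and which conjugates one incident edge subgroup onto a distinct one forces $G_{\tl m}$ to contain $\ZZ$ as an index-$2$ subgroup; the model is $w=a^mb^n$ ($m$ odd) in $\pK$ with $w^2=a^{2m}$ a power of the central fibre $a^2$, which explains why $Z_H(v^2)^h$ was found in a Seifert vertex subgroup or a $\ZZ$-edge subgroup. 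Since $v=h^{-1}wh$ is elliptic and cyclically reduced it lies in a genuine vertex subgroup conjugate to $G_{\tl m}$, which therefore contains $\ZZ$ with index $2$. Finally $v$ avoids all edge subgroups: were $v$ in an edge subgroup it would fix an edge, and since $\mathrm{Fix}(v)$ would then meet both that edge and the Klein-bottle vertex, $v$ would lie in an edge subgroup incident to the latter; but those are orientation-preserving (they lie in the index-$2$ subgroup $\ZZ$), contradicting that $v$ is orientation-reversing.

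The main obstacle is this last paragraph. The tree dichotomy and the ellipticity reduction are clean and purely combinatorial, but classifying the folding vertex groups --- excluding hyperbolic and non-Klein-bottle Seifert pieces and showing that an orientation-reversing square root of a peripheral element forces the $\ZZ$-index-$2$ structure --- is where the geometry of the pieces is genuinely needed, and is the delicate step.
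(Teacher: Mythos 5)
Your Bass--Serre reformulation of the first half is sound and faithfully parallels the paper: the translation-length identity $\ell(w^2)=2\ell(w)$ gives ellipticity of $w$ (the paper gets this by inspecting normal forms of $v^2$), and your fixing/folding dichotomy is a clean repackaging of the paper's passage, via Theorem 3.1 of \cite{cp3mg}, to the situation where $zv^2z^{-1}$ lies in an edge subgroup while $zvz^{-1}$ does not. (Two small slips there: if $S=\tl{\e}$ and $w$ moves it, $w$ need not fix an endpoint of $\tl{\e}$ --- you should run the midpoint argument on an endpoint of $\tl{\e}$ instead; and ruling out hyperbolic pieces needs $G_{\tl{\e}_1}\not=G_{\tl{\e}_2}$ \emph{as subgroups}, which requires malnormality of the peripheral subgroups --- this is exactly the acylindricity packaged in Proposition 4.1 of \cite{cp3mg}.) But the step you yourself flag as delicate is a genuine gap, and moreover the statement you assert there is not the right one. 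The folding configuration does \emph{not} force the vertex group to contain $\ZZ$ with index $2$. What it forces, by Proposition 4.1 of \cite{cp3mg} ($w^2$ of infinite order lying in two distinct edge-subgroup conjugates of a vertex group), is that the piece is Seifert fibered and $w^2$ is a power of a regular fiber. The paper then splits: if $H_{\v}\not\simeq\ZZ$, Theorem VI.1.6 of \cite{js} (via Lemma \ref{centralizer}) gives $Z_H(v^2)\subset H_{\v}$ and $Z_G(v^2)\subset G_{\v}$, so one lands back in conclusion (i) with a \emph{vertex} subgroup replacing the hypothesized edge subgroup; only when $H_{\v}\simeq\ZZ$ does one reach the index-$2$ alternative. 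Your dichotomy ``fold $\Rightarrow$ (ii)'' cannot produce this re-emergence of (i), so as written the argument proves a false trichotomy, and no amount of Seifert-invariant bookkeeping at the folding vertex will rescue the implication as you stated it.

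There is a second, independent gap at the end: conclusion (ii) is about $v$ itself, not a conjugate. From ``$w=v^h$ folds at $\tl{m}$'' you infer that $v$ ``lies in a genuine vertex subgroup conjugate to $G_{\tl{m}}$,'' but ellipticity plus cyclic reduction only places $v$ in \emph{some} standard vertex subgroup, a priori unrelated to the orbit of $\tl{m}$; identifying the two (equivalently, showing $\mathrm{Fix}(v)$ is the single vertex $h^{-1}\tl{m}$) is exactly what must be proved. The paper does this by iterating conjugation through adjacent vertex subgroups and using Lemma \ref{virtualzz}: in the type-(1) model the $\ZZ$-edge subgroup is \emph{normal} in the vertex group, so an infinite-order element of $G_{\v}\setminus H$ is conjugate in $G_{\v}$ to no edge subgroup, which pins the $\ZZ$-index-$2$ situation to the initial step, i.e.\ to $v$ itself, and simultaneously yields ``$v$ lies in no edge subgroup.'' Your closing argument for the latter assumes the vertex is already of Klein-bottle type with orientation-preserving incident edge groups --- true in case (1) of Lemma \ref{virtualzz}, but it presupposes the unproven forcing step above, and it ignores case (2) there (where edge groups equal the whole vertex group and are \emph{not} orientation-preserving; that case must be excluded by the folding geometry or by normality, as the paper does in Lemma \ref{vzz1}).
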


\begin{proof}
Consider  the  cyclically reduced $(\Mg,\x)$-loop $v$, it takes one of the following forms:
$$
v=(1,\e_1,1,\ldots,\e_n,1)(v_n,\e_{n+1},\ldots,\e_m,v_m)(1,\ol{\e}_n,\ldots,1,\ol{\e}_1,1)
$$
or $v=(v_1,\e,v_2)$, or $v=(v_0)$. Then $v^2$ takes  one of the forms:
$$
\begin{array}{l}
v^2=(1,\e_1,1,\ldots,\e_n,1)(v_n,\e_{n+1},\ldots,\e_m,v_mv_n,\e_{n+1},\ldots,\e_m,v_m)\\
\qquad\qquad\qquad\qquad\qquad\qquad\qquad\qquad\qquad\qquad\quad(1,\ol{\e}_n,\ldots,1,\ol{\e}_1,1)
\end{array}
$$
or $v^2=(v_1,\e,v_2v_1,\e,v_2)$, or $v^2=(v_0^2)$, which are cyclically reduced except possibly in the first case when $m=n$. Hence if $v^2$ lies in a vertex (or edge) subgroup, then $v$ also lies in a vertex subgroup (not necessarily the same) of $G$.

Write, using  the algorithm  of Lemma \ref{reduce}.(ii), $v^2$ into a cyclically reduced $(\Ng,\tl{\x})$-loop;  $v^2$ is conjugate to a vertex or edge subgroup of $H$ and according to  \cite[Theorem 3.1]{cp3mg} $v^2$ lies in a vertex subgroup $H_\v$ of $H$ and is conjugate in $H_\v$ to one of its edge subgroup $H_\e$.  Hence $v,v^2$ both lie in a vertex subgroup $G_\v$ of $G$ and $v^2$ is conjugate in $G_\v$ to an edge subgroup $G_\e$. 
Moreover
if whenever $v,v^2$ both lie in a conjugate $wG_\v w^{-1}$ of a vertex subgroup and $zv^2z^{-1}$, for some $z\in wG_\v w^{-1}$, lies in $wG_\e w^{-1}$, for $G_\e$ an edge subgroup  of $G_\v$, one has that $zv z^{-1}$ also lie in $wG_\e w^{-1}$, then  necessarily $\forall\, g\in G$, if $gv^2g^{-1}$ lies in a vertex or edge subgroup of $G$, then $gvg^{-1}$ lies in the same vertex or edge subgroup. In particular assumption (i) holds.\smallskip

So suppose in the following that, up to conjugacy, $v,v^2$ both lie in a vertex subgroup $G_\v$, and  that for some $z\in H_\v$, $zv^2z^{-1}$ lies in an edge subgroup $G_\e$ of $G_\v$ while $zvz^{-1}$ does not. 
Since $v\not\in H$, the piece $\M_\v$ in the topological decomposition of $\M$ associated to the vertex $\v$ is non-orientable; let $\N_\v$ be its orientation cover. Since $v$ has infinite order the edge $\e$ can be associated to either $\TT^2$ or $\KB^2$. \smallskip\\
{\it First case: $\e$ is associated to $\TT^2$.} Then $\N_\v$ has two $\TT^2$ in its boundary which give rises up to conjugacy to two edge subgroups $G_\e$ and $vG_\e v^{-1}$  in $H_\v$. But since $zv^2z^{-1}\in G_\e$,  $v^2$ lies in both conjugates  of  $G_\e$ and $vG_\e v^{-1}$ in $H_\v$ (with respective conjugating elements $z^{-1}$ and $v{z^{-1}}v^{-1}$). Hence by \cite[Proposition 4.1]{cp3mg}, $\widehat{\N_\v}$ is a Seifert fiber space and $v^2$ is a power of a regular fiber of a Seifert fibration. If $H_\v\not\simeq \ZZ$ then (see the end of the proof of Lemma \ref{centralizer}) $Z_H(v^2)\subset H_\v$ and $Z_G(v^2)\subset G_\v$; assumption (i) occurs. Otherwise $v$ lies in  a vertex subgroup containing $\ZZ$ as an index 2 subgroup.\smallskip

\noindent
{\it Second case: $\e$ is associated to $\KB^2$.} Then $\N_\v$ has one $\TT^2$ in its boundary which gives rise to the vertex subgroup $H_\e=G_\e\cap H$ of $H$. Since $zv^2z^{-1}\in H_\e$, one has that $v^2$ lies in both conjugates of $H_\e$ and $vH_\e v^{-1}$  in $H_\v$ (with respective conjugating elements $z^{-1}$ and $vz^{-1}v^{-1}$). Let $w\in G_\v$ such that $H_\e$ and $w$ generate $G_\e$; there exists $t\in H_\v\smallsetminus H_\e$ such that $v=tw$. Since $wH_\e w^{-1}=H_\e$ one has also that $v^2$ lies in two conjugates of $H_\e$ in $H_\v$ (with respective conjugating elements $z^{-1}$ and $vz^{-1}v^{-1}t$). 
Since $zvz^{-1}\not\in G_\e$, necessarily $zvz^{-1}v^{-1}t\not\in H_\e$ and it follows from \cite[Proposition 4.1]{cp3mg} that $v^2$ is a power of a regular fiber in a Seifert fibration of $\widehat{\N}_\v$.
The conclusion follows as in the first case.\smallskip

By taking successive conjugates of $v$ in adjacent vertex subgroups of $G$, one finally obtains that either assumption (i) occurs or at some stage one obtains a conjugate of $v$ which lies  in a vertex subgroup $G_\v$ containing $\ZZ$ as an index 2 subgroup, and does not lies in some conjugate in $G_\v$ of an edge subgroup $G_\e$ that contains $\ZZ$. Note  using the following Lemma, that in such case, since on the one hand such edge subgroup is normal and on the other $v$ has infinite order, then $v$ is not conjugate in $G_\v$ to any of the edge subgroups of $G_\v$. 
So finally, in case one of the successive conjugates of $v$ lies in such a vertex subgroup $G_\v$ containing a $\ZZ$ of index 2, then this can arise only at initial step, that is  $v\in G_\v$: in such case  conclusion (ii) occurs.
\end{proof}

\begin{lem}[Description of the non-orientable pieces whose groups contain $\ZZ$ as an index 2 subgroup]\label{virtualzz}
Let $G_\v$ be a vertex subgroup of $G$ which is not included in $H$ and which contains $\ZZ$ as an index 2 subgroup . Let $\M_\v$ be the 3-manifold associated to the vertex $\v$ of $\Mg$ and let $St(\v)=\{\e\in {\rm EM}, \t(\e)=\v\}$. Then exactly one of the following cases occurs:\smallskip
\begin{itemize}
\item[(1)] $\M_\v$ is the product of a Moebius band with $\SS^1$, $\partial\M_\v$ consists of one $\TT^2$, $St(\v)=\{\e\}$,
$$
G_\v=<a,b,t\,|\,[a,b]=[b,t]=1,\,  t^2=a>\ \simeq\ \ZZ\\
$$
and $G_e\simeq\ZZ$  is generated by $a$ and $b$.\smallskip
\item[(2)] $\M_\v$ is homeomorphic to $\KB^2\times\I$, $\partial\M_\v$ consists of two $\KB^2$, $St(\v)=\{\e_1,\e_2\}$,
$$
G_\e=<a,b,t\,|\,[a,b]=1,\, b^t=b^{-1},\, t^2=a>\ \simeq\ \Z\rtimes\Z\\
$$
and $G_{\e_1}=G_{\e_2}=G_\v$.\smallskip
\item[(3)] $\partial\M_\v$ consists of one $\TT^2$ and four $\PP^2$; $St(\v)=\{\e_0,\e_1,\e_2,\e_3,\e_4\}$,
$$
<a,b,t\,|\,[a,b]=1,\, a^t=a^{-1},\, b^t=b^{-1},\, t^2=1>\ \simeq (\ZZ)\rtimes_{-I}\Z_2
$$
$G_{\e_0}=<a,b>\simeq\ZZ$ and $G_{\e_i}\simeq\Z_2$, $i=1\ldots 4$, are generated respectively by: $t$, $at$, $bt$, $abt$. 
\end{itemize}
Moreover, two elements $u=a^{n_1}b^{m_1}t$ and $v=a^{n_2}b^{m_2}t$ are conjugate in $G_\v$ if and only if, respectively:
$$
\begin{array}{lll}
(1)\left\lbrace\begin{array}{l}
n_1=n_2\\ m_1=m_2\end{array}\right.; &
(2)\left\lbrace\begin{array}{l}
n_1=n_2\\  m_1=m_2\ {\rm mod}\ 2\end{array}\right.; &
(3)\left\lbrace\begin{array}{l}
n_1=n_2\ \ \,{\rm mod}\ 2\\ m_1=m_2\ {\rm mod}\ 2\end{array}\right.
\end{array}
$$\smallskip
\end{lem}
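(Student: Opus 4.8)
The plan is to recover $G_\v$, the piece $\M_\v$ and its incidence data from the extension $1\to A\to G_\v\to\Z_2\to 1$ with $A=\ZZ$, and then to settle the conjugacy criterion by a direct orbit computation. First I would fix $t\in G_\v\setminus A$; as $A$ is abelian and $t^2\in A$, conjugation by $t$ yields an automorphism $\theta\in\mathrm{GL}_2(\Z)$ with $\theta^2=I$ and $t^2\in\mathrm{Fix}(\theta)$. Up to conjugacy in $\mathrm{GL}_2(\Z)$ there are only three such involutions, $\theta=I$, $\theta=-I$ and the reflection $R=\mathrm{diag}(1,-1)$. Since replacing $t$ by $ta$ multiplies $t^2$ by $\theta(a)\,a$, the class of $t^2$ is well defined in $\mathrm{Fix}(\theta)$ modulo $\{\theta(a)\,a:a\in A\}$; this forces $t^2=1$ when $\theta=-I$, and leaves, for $\theta=I$ and $\theta=R$, one torsion-free class (in which a square root of a generator is absorbed, collapsing $G_\v$ to $\ZZ$, resp.\ to $\pK$) and one class giving a group with $2$-torsion ($\ZZ\times\Z_2$, resp.\ $\Z\times D_\infty$). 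This already produces the three presentations of the statement, together with two spurious torsionful candidates that must be excluded.

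The second and main step is the topological identification. As $\M_\v$ is non-orientable, its orientation cover $\N_\v$ satisfies $\pi_1(\N_\v)\simeq H_\v\simeq\ZZ$ and is orientable; being a piece of the decomposition of Lemma \ref{JSJ} and not hyperbolic, it is Seifert fibered, and the only orientable irreducible such manifold with incompressible boundary and group $\ZZ$ is $\TT^2\times\I$ (after capping the spheres when $\partial\M_\v$ has $\PP^2$ components). I would then analyse the orientation-reversing deck involution $\s$ of $\TT^2\times\I$ inducing $\theta$ on $\pi_1$. A Lefschetz count gives the number of isolated fixed points of $\s$ as $1-\mathrm{tr}\,\theta+\det\theta$, namely $0$ for $\theta=I,R$ and $4$ for $\theta=-I$ (each fixed point being locally modelled on $-\mathrm{id}$ of $\R^3$, hence of index $+1$). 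Thus for $\theta=I$ and $\theta=R$ the involution is free, $\M_\v$ is aspherical and $G_\v$ is torsion-free, which discards the torsionful candidates and pins down $\mathrm{Mb}\times\SS^1$ (case (1)) and $\KB^2\times\I$ (case (2)); for $\theta=-I$ the model $(x,s)\mapsto(-x,1-s)$ has four fixed points and swaps the two boundary tori, producing one $\TT^2$ and four $\PP^2$ in $\partial\M_\v$ and the four cone points of case (3). In each case $St(\v)$ and the edge groups are read off from $\partial\M_\v$: the torus contributes a $\ZZ$-edge $\langle a,b\rangle$, each Klein bottle a $\pK$-edge equal to $G_\v$, and each projective plane a $\Z_2$-edge generated by one of the four involutions $t,at,bt,abt$. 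I expect this step to be the crux, the delicate point being precisely the exclusion of isolated fixed points when $\theta=I,R$, that is, ruling out the torsionful groups as genuine pieces.

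Finally the conjugacy criterion is a direct computation, using that every element of $G_\v$ lies in $A$ or in $At$. Conjugating $u=a^{n_1}b^{m_1}t$ by $a^pb^q\in A$ gives $a^{n_1+p}b^{m_1+q}\,\theta(a^{-p}b^{-q})\,t$, while conjugating by $t$ replaces $a^{n_1}b^{m_1}$ by $\theta(a^{n_1}b^{m_1})$. Substituting the three values of $\theta$, I would check that the orbit of $(n_1,m_1)$ is the single point $(n_1,m_1)$ in case (1) (where $G_\v$ is abelian, so conjugacy is equality), is $\{(n_1,m_1+2q),(n_1,-m_1+2q):q\in\Z\}$ in case (2) (yielding $n_1=n_2$ and $m_1\equiv m_2\ \mathrm{mod}\ 2$), and is the pair of residues $(n_1,m_1)\bmod 2$ in case (3). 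These are exactly the three stated equivalences, which completes the proof.
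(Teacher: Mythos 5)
You have a genuine error in the algebraic step on which your whole case analysis rests: $\mathrm{GL}_2(\Z)$ has \emph{four}, not three, conjugacy classes of elements $\theta$ with $\theta^2=I$, namely $I$, $-I$, $\mathrm{diag}(1,-1)$, and the swap $\left(\begin{smallmatrix}0&1\\1&0\end{smallmatrix}\right)$. The last two are not conjugate in $\mathrm{GL}_2(\Z)$: for $\mathrm{diag}(1,-1)$ the sublattice $\mathrm{Fix}(\theta)\oplus\mathrm{Fix}(-\theta)$ is all of $\Z^2$, while for the swap it has index $2$ (equivalently, $\Z^2$ is $\Z_+\oplus\Z_-$ versus the regular $\Z[\Z_2]$-module). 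The omitted class is not a phantom: it is induced by the torus involution $(x,y)\mapsto(y,x)$, whose fixed set is a circle --- exactly involution 3 in the list of five torus involutions that the paper quotes from Kim--Tollefson and Luft--Sjerve --- and the corresponding level-preserving involution of $\TT^2\times\I$ fixes an annulus. Your Lefschetz count cannot exclude it, since $L=1-\mathrm{tr}\,\theta+\det\theta=0$ for the swap as well (consistently, $\chi(\text{annulus})=0$). The gap is fixable with your own tools: for the swap one has $\mathrm{Fix}(\theta)=\langle ab\rangle=\{\theta(c)c: c\in A\}$, so $t$ normalizes to $t^2=1$ and \emph{every} extension in this class has $2$-torsion (e.g.\ $(a^pb^{-p}t)^2=1$); hence no free involution can induce it, while $L=0$ together with index $+1$ at isolated fixed points forces freeness under the at-most-isolated-fixed-points hypothesis --- a contradiction that eliminates the class. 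As written, though, your trichotomy is obtained from a false classification, so this must be repaired before the rest goes through.

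Apart from this, your route is essentially a self-contained re-derivation of what the paper imports from the literature: the paper identifies $\s$ by quoting the isotopy classification of involutions of $\TT^2\times\I$ with at most isolated fixed points (five involutions, three non-orientable, via the Kim--Tollefson level/flip factorization and Luft--Sjerve), whereas you substitute extension theory plus a Lefschetz count; that buys independence from those references at the cost of the pitfall above. One further caveat: freeness of $\s$ plus the class of $\theta$ determines $\pi_1(\M_\v)$, but to pin down the homeomorphism type of $\M_\v$ --- and with it $\partial\M_\v$, $St(\v)$ and the edge subgroups --- you still implicitly need the involution to be standard (conjugate to your model), so you should either cite the Kim--Tollefson/Luft--Sjerve standardness as the paper does, or argue directly that a compact irreducible non-orientable $3$-manifold with incompressible boundary, double covered by a (punctured) $\TT^2\times\I$, with $\pi_1\simeq\ZZ$ (resp.\ the Klein bottle group, resp.\ $(\ZZ)\rtimes_{-I}\Z_2$) is the product of a Moebius band and $\SS^1$ (resp.\ $\KB^2\times\I$, resp.\ the case (3) piece). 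Your final orbit computations for the three conjugacy criteria are correct and coincide with the paper's ``direct computation''.
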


\begin{proof}
The group $G_\v$ is the fundamental group of the non-orientable 3-manifolds $\M_\v$ which is two covered by a possibly punctured $\TT^2\times\I$; the cover involution extends to an orientation reversing  involution $\s$ of $\TT^2\times\I$ with at most isolated fixed points. It is known (cf. details in \cite{luft}) that there are up to isotopy 5 involutions with at most isolated fixed points on $\TT^2\times\I$, among which 3 are non-orientable. We set here $\I=[0,1]$ and $\SS^1=\R/2\pi\Z$.
According to \cite{kim}, up to isotopy $\s$ factors  as a product $\s((x,y),t)=(\phi(x,y),t)$ or $\s((x,y),t)=(\phi(x,y),1-t)$
for $\phi$ an homeomorphism of $\TT^2$.
There are up to isotopy 5  involutions of the torus. They are:\\
1. $\phi(x,y)=(x+\pi,y)$  with no fixed point and orbit space $\TT^2$,\\
2. $\phi(x,y)=(-x,y)$  with fixed point set $\SS^1\times \SS^0$ and orbit space $\SS^1\times \I$,\\
3. $\phi(x,y)=(y,x)$ with fixed point set a circle and orbit space a Moebius band,\\
4. $\phi(x,y)=(x+\pi,-y)$  with no fixed point and orbit space $\KB^2$,\\
5. $\phi(x,y)=(-x,-y)$  with fixed point set 4 points and orbit space $\SS^2$,\\
among them only 1 and 5 are orientation preserving. Since $\s$ is non-orientable and has at most isolated fixed points, the only 3 possibilities are:\\
(1). $\s(x,y,t)=(x+\pi,y,1-t)$;  $\M_\v=\TT^2\times\I/\s$ is the twisted $I$-bundle over the torus,  otherwise said the product of a Moebius band with $\SS^1$, and $G_\v$ admits the first presentation.\\
(2). $\s(x,y,t)=(x+\pi,-y,t)$; $\M_\v=\TT^2\times\I/\s$ is $\KB^2\times\I$ and $G_\v$ admits the second presentation.\\
(3). $\s(x,y,t)=(-x,-y,1-t)$; $\M_\v=\TT^2\times\I/\s$ has orientation covering space $\TT^2\times\I$ minus 4 balls centered on the fixed points, its boundary consists of four $\PP^2$ and one $\TT^2$, and $G_\v$ admits the third presentation. $\M_\v$ can be seen as: let $\PP$ be two copies of $\PP^2\times \I$ glued along two disks in their boundary, $\partial\PP$ consists of one $\KB^2$ and two $\PP^2$ and $\KB^2$ contains one annulus which is essential in $\PP$. Glue two copies of $\PP$ on this annulus to obtain $\M_\v$ (cf. details in \cite{luft}). \smallskip\\
The conjugacy criteria are  obtained by direct computations.
\end{proof}

Now several cases can occur, that one checks using the following lemma.
\begin{lem} 
One of the following cases occur:
\begin{itemize} 
\item[(a)] $Z_G(v^2)$ is conjugate to a subgroup of a Seifert vertex subgroup $G_\v$ of $G$.
\item[(b)] $Z_G(v^2)$ is conjugate to an edge subgroup $G_\e$ of $G$.
\item[(c)] $Z_G(v^2)$ is  conjugate neither into a Seifert vertex subgroup nor to an edge subgroup of $G$. Both $u$ and $v$ lie in  vertex subgroups containing $\ZZ$ as an index 2 subgroup, and do not lie in any edge subgroup. 
\end{itemize}
One can decide which case occurs.
\end{lem}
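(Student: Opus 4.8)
The plan is to feed the cyclically reduced element $v\in G\setminus H$ (of infinite order) into Lemma~\ref{squareroot}. Its hypothesis is already met: algorithm $\mathfrak{ZP}(H)$ (Lemma~\ref{centralizer}) has produced $h\in H$ with $Z_H(v^2)^h$ lying in a Seifert vertex subgroup $H_\v$ or in a $\ZZ$-edge subgroup $H_\e$ of $H$, hence in a vertex or edge subgroup of $G$. The key structural fact I would exploit is that $Z_G(v^2)$ is generated by $Z_H(v^2)$ and $v$, so that conjugating these two generators by $h$ simultaneously controls the whole group $Z_G(v^2)^h$. The dichotomy of Lemma~\ref{squareroot} then splits the argument into a ``localising'' branch and an ``escaping'' branch, which I claim correspond respectively to $\{(a),(b)\}$ and to $(c)$.

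First I would treat conclusion~(i) of Lemma~\ref{squareroot}, that $Z_H(v^2)^h$ and $v^h$ lie in a common vertex subgroup $G_{\v_0}$ or edge subgroup $G_{\e_0}$. Then $Z_G(v^2)^h=\langle Z_H(v^2)^h,v^h\rangle$ lies in $G_{\v_0}$, respectively $G_{\e_0}$, so $Z_G(v^2)$ is conjugate into a vertex or edge subgroup and the $\mathfrak{ZP}(H)$ classification lifts. If $\mathfrak{ZP}(H)$ reported the Seifert-vertex alternative, the receiving vertex subgroup is Seifert, because a non-orientable piece is Seifert exactly when its orientation cover is, and $Z_H(v^2)^h$ spans an infinite subgroup of the Seifert vertex subgroup $H_\v$; this gives case~(a). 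If $\mathfrak{ZP}(H)$ reported the $\ZZ$-edge alternative — in which $Z_H(v^2)$ is \emph{not} conjugate into any Seifert vertex subgroup — then $Z_G(v^2)$ is conjugate to an edge subgroup $G_\e$ but into no Seifert vertex subgroup, which is case~(b). That cases~(a),(b),(c) are genuinely mutually exclusive I would read off from the trichotomy of Theorem~VI.1.6 and the Characteristic Pair Theorem of \cite{js}, with the convention (forced by those results) that ``conjugate into a Seifert vertex subgroup'' takes precedence over ``conjugate to an edge subgroup''.

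It remains to handle conclusion~(ii) of Lemma~\ref{squareroot}: $v$ lies in a vertex subgroup $G_\v$ containing $\ZZ$ with index $2$ and in no edge subgroup. By Lemma~\ref{virtualzz}, $G_\v$ is one of the three explicit groups, and from each presentation I would check directly that an infinite-order $v\in G_\v\setminus H$ lying in no edge subgroup has $v^2$ inside the rank-two free abelian subgroup $\langle a,b\rangle$, while $Z_G(v^2)$ cannot be conjugated into any Seifert vertex subgroup nor into any edge subgroup of $G$; this is exactly the mismatch with the orientation cover, where $Z_H(v^2)$ still sits inside the Seifert (or $\ZZ$-edge) subgroup coming from the $\TT^2\times\I$ cover. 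To complete case~(c) I must also place $u$ in such a piece: since $u^2=(v^2)^k$, the group $Z_G(u^2)$ is conjugate to $Z_G(v^2)$ and hence likewise conjugate into neither a Seifert vertex nor an edge subgroup, so applying Lemma~\ref{squareroot} to the cyclically reduced $u$ rules out its conclusion~(i) (which would localise $Z_G(u^2)$) and forces conclusion~(ii), giving both $u$ and $v$ in virtually-$\ZZ$ vertex subgroups lying in no edge subgroup.

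Decidability follows because Lemma~\ref{squareroot} is constructive: taking successive conjugates of $v$ into adjacent vertex subgroups and testing membership in edge subgroups with the algorithms of Lemmas~\ref{reduce} and~\ref{dehnpbm} decides which of its conclusions holds, and in conclusion~(i) the Seifert invariants returned by $\mathfrak{Top4}$ (Lemma~\ref{seifertinvariant}) separate case~(a) from case~(b). The step I expect to be the main obstacle is conclusion~(ii): verifying \emph{uniformly} across the three models of Lemma~\ref{virtualzz} that $Z_G(v^2)$ escapes every Seifert vertex and every edge subgroup, and reconciling this with the fact that its index-two trace $Z_H(v^2)$ in the orientation cover does lie in such a subgroup — this discrepancy being precisely what makes the non-orientable case harder than the orientable one.
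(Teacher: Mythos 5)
Your skeleton is the paper's: apply Lemma \ref{squareroot} to $v$ and to $u$, use that $Z_G(v^2)$ is generated by $Z_H(v^2)$ together with $v$ (respectively together with $k^{-1}uk$, whose square is $v^2$), read cases (a), (b) off the two alternatives reported by $\mathfrak{ZP}(H)$, and decide by membership tests. But the step you yourself flag as the main obstacle is a genuine gap, and it cannot be repaired the way you propose: the two conclusions of Lemma \ref{squareroot} are an inclusive disjunction, not a dichotomy, and conclusion (ii) does \emph{not} imply that $Z_G(v^2)$ escapes every Seifert vertex subgroup and every edge subgroup. Concretely, let $\M_\v$ be the product of a Moebius band and $\SS^1$ (case (1) of Lemma \ref{virtualzz}), so $G_\v=\langle b,t\rangle\simeq\ZZ$ with edge subgroup $G_\e=\langle t^2,b\rangle$ of index $2$, glued along its torus to a piece in which the centralizer of $v^2$ is reduced to the peripheral subgroup $G_\e$ (a hyperbolic piece, say, whose cusp subgroups are malnormal). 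Take $v=b^mt$: then $v$ lies in $G_\v$ and in no edge subgroup, so the placement of conclusion (ii) holds; yet $v^2=t^2b^{2m}\in G_\e$ and a Bass--Serre argument gives $Z_G(v^2)=G_\v$, a subgroup of the Seifert vertex subgroup $G_\v$ (its cover piece is $\TT^2\times\I$), i.e.\ case (a) occurs. So whether $Z_G(v^2)$ escapes is not readable from the three presentations of Lemma \ref{virtualzz} alone --- it depends on the adjacent pieces --- and your planned ``direct check across the three models'' would be attempting to prove a false statement. The paper never derives case (c) from conclusion (ii): case (c) is the \emph{residual} case. The actual decision procedure is that $\mathfrak{ZP}(H)$ returns \emph{all} pairs (vertex or edge subgroup of $H$ containing a conjugate of $Z_H(v^2)$, conjugating element), and one tests, with the generalized word problem of Lemma \ref{algopieces}.(iv) (this, rather than Lemmas \ref{reduce} and \ref{dehnpbm}, is the membership tool), whether the corresponding conjugates of $v$ and of $k^{-1}uk$ lie in the corresponding vertex or edge subgroup of $G$; if some test succeeds one is in case (a) or (b), and only when all tests fail does one declare case (c), conclusion (ii) of Lemma \ref{squareroot} then supplying the stated placement of both $u$ and $v$.

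Two smaller points. To apply Lemma \ref{squareroot} to $u$ at all, its hypothesis requires a conjugator \emph{lying in $H$} that carries $Z_H(u^2)$ into a vertex or edge subgroup: $hk^{-1}$ works algebraically (since $Z_H(u^2)=kZ_H(v^2)k^{-1}$) but may lie outside $H$; the paper's fix is to use $hk^{-1}u\in H$ when $k\notin H$, legitimate because $u$ centralizes $u^2$ and hence normalizes $Z_H(u^2)$. Your indirect argument for $u$ silently assumes this hypothesis is met. Finally, the Seifert invariants of algorithm $\mathfrak{Top4}$ (Lemma \ref{seifertinvariant}) are not what separates (a) from (b) --- that is determined by which alternative $\mathfrak{ZP}(H)$ reports and which membership test succeeds --- they serve to recognize the vertex subgroups containing $\ZZ$ with index $2$ (Seifert basis $\SS^1\times\I$, no exceptional fiber), which is what case (c) needs; and your effort on mutual exclusivity is unnecessary, since the lemma only asserts that one of the cases occurs and can be detected: an overlap of (a) and (b) is harmless, as either of the subsequent routines then decides whether $u\sim v$.
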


\begin{proof} 
That case (a), (b) or (c) occurs follows from Lemma \ref{squareroot} applied to $u$ and $v$ (when applying Lemma \ref{squareroot} to $u$, if $k\not\in h$  use the conjugating element $hk^{-1}u\in H$ rather than $hk^{-1}\not\in H$), since $Z_G(v^2)$ is generated by on the one hand $Z_H(v^2)$ and $v$ and on the other by $Z_H(v^2)$ and $u$. The algorithm $\mathfrak{ZP}(H)$ returns all vertex or edge subgroups of $H$ containing $Z_H(v^2)$. Using the algorithm in Lemma \ref{algopieces}.(iv) one finds all vertex and edge subgroups containing $v$, $u$. A vertex subgroup $G_\v$ contains $\ZZ$ as an index 2 subgroup if and only if  $H_\v$ is associated to a Seifert piece with among its Seifert invariants, has basis $\SS^1\times \I$ and no exceptional fiber.
\end{proof}

{\noindent\sl$\bullet$\ \begin{minipage}[t]{\largeur}
 Cases (a) and (b): $Z_G(v^2)$   is conjugate to a subgroup of a Seifert vertex subgroup $G_\v$ or to an edge subgroup $G_\e$ of $G$.\smallskip
\end{minipage}
 }

In such cases change $v$ into $hvh^{-1}$ and $u$ into $hk^{-1}ukh^{-1}$ so that $u,v$ both lie in $G_\v$ or $G_\e$ and $u^2=v^2$. \medskip

{\noindent\sl$\bullet$\ \begin{minipage}[t]{\largeur}
 Change $v$ and $u$ into their respective conjugate $hvh^{-1}$ and $hk^{-1}ukh^{-1}$ in $G_\v$, $G_\e$.\smallskip
\end{minipage}
 }

We now return to each of the cases (a), (b).\medskip

{\noindent\sl$\bullet$\ \begin{minipage}[t]{\largeur}
 Case (a): $Z_G(v^2)$   lies in a Seifert vertex subgroup $G_\v$ of $G$.\\
\end{minipage}
 }
\indent
One decides whether $u\sim v$ in $G$, using the following lemma and the solution to the conjugacy problem in $G_\v$ (Lemma \ref{dehnpbm}.(iv)).

\begin{lem}[In case $Z_G(v^2)$ is included in a Seifert vertex group] 
If $Z_G(v^2)$ is included in a Seifert subgroup $G_\v$ of $G$, then a solution to the conjugacy problem in $G_\v$ allows to decide whether $u\sim v$ in $G$.
\end{lem}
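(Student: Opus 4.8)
The plan is to show that, under the hypothesis $Z_G(v^2)\subset G_\v$, the question of conjugacy of $u$ and $v$ in the whole group $G$ collapses to conjugacy inside the single Seifert vertex subgroup $G_\v$, which is then settled by the algorithm $\mathfrak{CP}(G_\v)$ of Lemma \ref{dehnpbm}.(iv). The key point I would exploit is that, at this stage of the algorithm, both $u$ and $v$ lie in $G_\v$ and satisfy $u^2=v^2$ (this is exactly the outcome of the reduction carried out just above, where $v$ was replaced by $hvh^{-1}$ and $u$ by $hk^{-1}ukh^{-1}$).

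First I would analyze an arbitrary conjugator. Suppose $u\sim v$ in $G$, say $u=v^g$ for some $g\in G$, that is $u=gvg^{-1}$. Squaring this relation gives $u^2=g\,v^2\,g^{-1}$; combined with $u^2=v^2$ one obtains $v^2=g\,v^2\,g^{-1}$, so that $g\in Z_G(v^2)$. By hypothesis $Z_G(v^2)\subset G_\v$, whence $g\in G_\v$. In other words, the constraint $u^2=v^2$ forces every element of $G$ conjugating $v$ to $u$ to centralize $v^2$ and therefore to lie in $G_\v$; consequently $u$ and $v$ are already conjugate inside $G_\v$. The converse implication is immediate, since any element of $G_\v$ conjugating $v$ to $u$ conjugates them in $G$ as well. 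Thus $u\sim v$ in $G$ if and only if $u\sim v$ in $G_\v$, and the latter is decided by the solution to the conjugacy problem in the Seifert vertex subgroup $G_\v$ provided by Lemma \ref{dehnpbm}.(iv), which finishes the proof.

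I expect that no genuine obstacle remains at this point: the entire difficulty has been absorbed into the preparatory lemmas — arranging $u^2=v^2$ via $\mathfrak{CP}_1(G)$ and locating $Z_G(v^2)$ inside $G_\v$ through Lemma \ref{squareroot} and $\mathfrak{ZP}(H)$. The only step requiring care, and the conceptual heart of the argument, is the elementary observation that a relation $u^2=v^2$ together with $u=v^g$ pins the conjugator $g$ into the centralizer $Z_G(v^2)$; once that is granted, the hypothesis $Z_G(v^2)\subset G_\v$ localizes the whole conjugacy question in $G_\v$ and the decidability is inherited directly from the vertex group.
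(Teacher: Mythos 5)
Your proposal is correct and takes essentially the same route as the paper: your observation that $u=v^{g}$ together with $u^{2}=v^{2}$ forces $g\in Z_G(v^2)\subset G_\v$ is exactly the paper's remark that $C_G(u,v)\subset C_G(u^2,v^2)=Z_G(v^2)\subset G_\v$, so that conjugacy of $u$ and $v$ in $G$ reduces to conjugacy in $G_\v$. Both arguments then conclude by invoking the solution $\mathfrak{CP}(G_\v)$ to the conjugacy problem in the Seifert vertex subgroup.
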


\begin{proof}
One has by hypothesis that $C_G(u^2,v^2)=Z_G(v^2)$ is included in $G_\v$. Since
  $C_G(u^2,v^2)\supset C_G(u,v)$, if $u$ and $v$ are conjugate in
  $G$, they must also be conjugate in $G_v$, that one can decide using $\mathfrak{CP}(G_\v)$ (Lemma \ref{dehnpbm}.(iv)).
\end{proof}

{\noindent\sl$\bullet$\ \begin{minipage}[t]{\largeur}
 Case (b): $Z_G(v^2)$   lies  in an edge subgroup $G_\e$ of $G$.\medskip
\end{minipage}
 }
\indent
In such case one decides whether $u\sim v$ using the following lemma.

\begin{lem}[Algorithm $\mathfrak{CP}(\Z\rtimes\Z)$]
If $Z_G(v^2)$ is included in an edge subgroup $G_\e$ of $G$, then one can  decide whether $u\sim v$ in $G$.
\end{lem}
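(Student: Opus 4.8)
The plan is to reduce the conjugacy of $u$ and $v$ in $G$ to their conjugacy inside the edge subgroup $G_\e$, exploiting the hypothesis that $Z_G(v^2)$ already sits in $G_\e$. First I would recall from the running context that, after the conjugations performed just before this lemma, $u$ and $v$ both lie in $G_\e$, that $u^2=v^2$, and that $u,v$ have infinite order and lie in $G\setminus H$. Infinite order rules out a $\Z_2$-edge subgroup, and every $\ZZ$-edge subgroup of $G$ is contained in $H$ (by the construction of $\Ng$ from $\Mg$ in Lemma \ref{graphsofgroups}, where the edges of ${\rm EM}^+$ satisfy $H(\e^+)=G(\e)$, so that their images in $\pi_1(\Mg,\x)$ lie in $\p_*(H)=H$). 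Hence the only possibility is that $G_\e$ is a $\Z\rtimes\Z$-edge subgroup; I would write $G_\e=<a,b\,|\,aba^{-1}=b^{-1}>$ with $a$ orientation reversing and $b$ orientation preserving, so that $G_\e\cap H=<a^2,b>$ and the orientation reversing elements of $G_\e$ are exactly the $a^nb^m$ with $n$ odd.

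The key step is the reduction. If some $g\in G$ satisfies $v=gug^{-1}$, then $v^2=gu^2g^{-1}$, and since $u^2=v^2$ this forces $g\in Z_G(u^2)=Z_G(v^2)$. By hypothesis $Z_G(v^2)\subseteq G_\e$, so $g\in G_\e$. Thus $C_G(u,v)\subseteq Z_G(v^2)\subseteq G_\e$, and $u$ and $v$ are conjugate in $G$ if and only if they are conjugate in $G_\e$. This localization of every possible conjugator inside $G_\e$ is the heart of the argument, and it rests entirely on $Z_G(v^2)=C_G(u^2,v^2)$ having been placed inside $G_\e$ in the preceding steps.

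It then remains to solve the conjugacy problem in $G_\e\simeq\Z\rtimes\Z$, which is elementary. Using the solution to the word problem in $G$ I would put $u,v$ into normal form $u=a^{n_1}b^{m_1}$, $v=a^{n_2}b^{m_2}$; since $u,v\notin H$ both $n_1,n_2$ are odd, whence $u^2=a^{2n_1}$ and $v^2=a^{2n_2}$, and $u^2=v^2$ forces $n_1=n_2$. A direct computation from $a^nba^{-n}=b^{(-1)^n}$ shows that conjugating $a^nb^m$ by $b^k$ yields $a^nb^{m-2k}$ and by $a^j$ yields $a^nb^{(-1)^jm}$, so $u$ and $v$ are conjugate in $G_\e$ if and only if $m_1\equiv m_2\pmod 2$. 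Combined with the reduction above, this decides whether $u\sim v$ in $G$.

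I expect no serious obstacle here: once $Z_G(v^2)$ is known to lie in $G_\e$, confining every conjugator to $G_\e$ is immediate, and conjugacy in the Klein bottle group is explicit. The only points requiring care are the bookkeeping that identifies $G_\e$ as a $\Z\rtimes\Z$-edge subgroup rather than $\ZZ$ or $\Z_2$, and fixing the orientation conventions that single out which generator is orientation reversing; both follow from the construction of the graphs of groups in Step 3.
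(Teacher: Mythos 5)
Your proposal is correct and takes essentially the same route as the paper: since $u^2=v^2$ after the preliminary conjugations, any conjugator of $u,v$ lies in $Z_G(v^2)\subseteq G_\e$, and conjugacy in the Klein bottle group $G_\e\simeq\Z\rtimes\Z$ is then decided by the criterion $n_1=n_2$, $m_1\equiv m_2 \pmod 2$, which the paper cites from Lemma \ref{virtualzz}(2) and you verify by direct computation. The only cosmetic difference is that the paper rewrites $u,v$ on the generators of $G_\e$ via the generalized word problem of Lemma \ref{algopieces}.(iv), whereas you invoke the word problem in $G$ (which, given the guarantee $u,v\in G_\e$, still yields an effective if naive normal-form search), so both versions are algorithmically sound.
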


\begin{proof}  Necessarily $G_\e=Z_G(v^2)$ is isomorphic to the group of the Klein bottle which has generators $t,b$ in $G$ with finite presentation 
$G_\e=<t,b\,|\,tbt^{-1}=b^{-1}>$. Set $a=t^2$ to obtain the alternative presentation $<a,b,t\,|\,[a,b]=1,\, t^2=a,\, b^t=b^{-1}>$; then use the algorithm in Lemma \ref{algopieces}.(iv)  to write $u$ and $v$ on the generators $a,b,t$, say $u=a^{n_1}b^{m_1}t$ and $v=a^{n_2}b^{m_2}t$. Then by Lemma \ref{virtualzz},  $u\sim v$ in $G_\v$ if and only if $n_1=n_2$ and $m_1=m_2 \mod 2$, and one can decide whether $u\sim v$ in $G_\e$. Since here again $C_G(u,v)=Z_G(v)\subset G_\e$, one has finally that $u\sim v$ in $G_\e$ if and only if $u\sim v$ in $G$.\smallskip
\end{proof}

{\noindent\sl$\bullet$\ \begin{minipage}[t]{\largeur}
 Case (c): $Z_G(v^2)$  is conjugate neither to a subgroup of  a vertex subgroup nor to an edge subgroup of $G$.\smallskip
\end{minipage}
 }

In that case both $u$ and $v$ lie in vertex subgroups $G_{\v_1}$, $G_{\v_2}$ of $G$ which both contain $\ZZ$ as an index 2-subgroup and do not lie in any edge subgroup. 
Now two cases can occur, according to whether $u,b$ both lie in a same vertex subgroup $\v_1=\v_2$ other not. One concludes in each case by using the following lemma as well as a solution to the word problem in $G_{\v_1}$.

\begin{lem}\label{vzz1} When $u,v$ both lie in vertex subgroup $G_{\v_1}$, $G_{\v_2}$ containing $\ZZ$ as index 2 subgroups and do not lie in any edge subgroup:
\begin{itemize}
\item[(i)] if $\v_1\not=\v_2$  then $u$ and $v$ are not conjugate in $G$,
\item[(ii)] if $\v_1=\v_2$, then $u$ and $v$ are conjugate in $G$ if and only if $u=v$.
\end{itemize}
\end{lem}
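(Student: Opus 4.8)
The plan is to push everything onto the action of $G=\pi_1(\Mg,\x)$ on its Bass--Serre tree $T$ (cf. \cite{serre}), after first pinning down the precise shape of the vertex subgroups involved. First I would argue that the two vertex subgroups $G_{\v_1},G_{\v_2}$ must be of type $(1)$ in Lemma \ref{virtualzz}, so that each $\M_{\v_i}$ is the product of a Moebius band with $\SS^1$ and $G_{\v_i}\simeq\ZZ$. Indeed type $(3)$ is excluded because there every orientation reversing element $a^nb^mt$ squares to $1$, whereas $u,v$ have infinite order; and type $(2)$ is excluded because there the incident edge groups satisfy $G_{\e_1}=G_{\e_2}=G_\v$, so every element of $G_\v$ would lie in an edge subgroup, against the standing hypothesis that $u,v$ lie in no edge subgroup. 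In the surviving type $(1)$ case the vertex carries a single incident edge, $St(\v_i)=\{\e\}$, with edge subgroup $G_\e=\langle a,b\rangle=H_{\v_i}$, the orientation preserving index two subgroup of $G_{\v_i}$.

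Next I would record the crucial elliptic behaviour. Since $u$ (resp. $v$) lies in $G\setminus H$, hence is orientation reversing, while $G_\e=H_{\v_i}$ consists of orientation preserving elements, and since $G_{\v_i}\simeq\ZZ$ is abelian, $u$ is not conjugate inside $G_{\v_i}$ to any element of its unique incident edge subgroup (simply because $u\notin H_{\v_i}$). Translated to the tree this says that the fixed point set of $u$ in $T$ is the single vertex $\tl{\v}_1$ stabilised by $G_{\v_1}$: if $u$ fixed a second vertex it would stabilise an edge issuing from $\tl{\v}_1$, hence lie in a conjugate inside $G_{\v_1}$ of $G_\e$, which cannot happen. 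The same applies to $v$ at its vertex $\tl{\v}_2$.

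Both assertions then drop out. For (i), if $u=v^g$ for some $g\in G$ then $u$ fixes $g\cdot\tl{\v}_2$, so by uniqueness of its fixed vertex $g\cdot\tl{\v}_2=\tl{\v}_1$; projecting to the underlying graph ${\rm M}$ forces $\v_1=\v_2$, so when $\v_1\neq\v_2$ no such $g$ exists and $u\not\sim v$. For (ii) with $\v_1=\v_2=\v$, both $u$ and $v$ fix the single vertex $\tl{\v}$ stabilised by $G_\v$; from $u=v^g$ we again obtain $g\cdot\tl{\v}=\tl{\v}$, whence $g\in\mathrm{Stab}(\tl{\v})=G_\v$ and $u$ is conjugate to $v$ already in $G_\v$. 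By the conjugacy criterion of Lemma \ref{virtualzz} (case $(1)$) this forces $n_1=n_2$ and $m_1=m_2$, i.e. $u=v$, the converse implication being trivial.

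The main obstacle is the first paragraph: one must be certain that the two hypotheses ``infinite order'' and ``does not lie in any edge subgroup'' genuinely force type $(1)$, for it is exactly this that makes the only incident edge subgroup orientation preserving and so removes any interference from the $\Z\rtimes\Z$ (Klein bottle) edges, where orientation reversing elements of infinite order do occur. Once the elliptic element $u$ is shown to have a single fixed vertex in $T$, the remaining Bass--Serre bookkeeping is routine.
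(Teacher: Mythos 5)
Your proof is correct, and it reaches the conclusion by a genuinely different (though ultimately equivalent) mechanism from the paper's. The first step is shared: like the paper, you force case (1) of Lemma \ref{virtualzz}, excluding case (2) because there $G_{\e_1}=G_{\e_2}=G_\v$ would put $u,v$ in an edge subgroup, and case (3) because there every element outside the $\ZZ$-edge subgroup squares to $1$, against infinite order (both you and the paper are here quietly using the standing assumptions of Step 4 --- $u,v\in G\setminus H$ of infinite order --- which are not literally in the lemma's statement; you are on equal footing with the paper in this respect). The key reduction is also the same in substance: $u$ is not conjugate within $G_{\v_1}$ into its unique incident edge subgroup --- you get this from abelianness of $G_{\v_1}\simeq\ZZ$, the paper from normality of $G_{\e_1}$ in $G_{\v_1}$, which is the same observation. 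Where you diverge is the final step: the paper deletes the edge pair $\e_1,\ol{\e}_1$ from ${\rm M}$ to split $G$ as an amalgam $K\ast_{G_{\e_1}}G_{\v_1}$ and quotes the conjugacy theorem for amalgamated products (Theorem 4.6 of \cite{mks}), treating (i) and (ii) by the same amalgam argument; you instead act on the Bass--Serre tree and show the fixed subtree of $u$ is the single vertex $\tl{\v}_1$ (any edge at $\tl{\v}_1$ has stabilizer a $G_{\v_1}$-conjugate, hence by abelianness an exact copy, of $\phi_\e(G_\e)$, which $u$ avoids). Your route buys a uniform treatment of (i) and (ii) --- uniqueness of the fixed vertex immediately forces any conjugator into ${\rm Stab}(\tl{\v})=G_\v$, making the ``isolation'' of the vertex subgroup transparent --- and it does not require checking that ${\rm M}$ minus the edge stays connected on the $K$-side; the paper's route stays inside the combinatorial framework of \cite{mks} already used elsewhere in the paper (e.g.\ in the proof of Theorem C) and avoids introducing tree actions at all. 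One small point worth tightening: you identify $G_\e$ with the orientation-preserving subgroup $H_{\v_i}$ to conclude $u\notin G_\e$, but this detour through orientation is unnecessary --- the lemma's hypothesis already states that $u,v$ lie in no edge subgroup, which together with abelianness gives non-conjugacy into $G_\e$ directly.
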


\begin{figure}[h]
\includegraphics[scale=0.9]{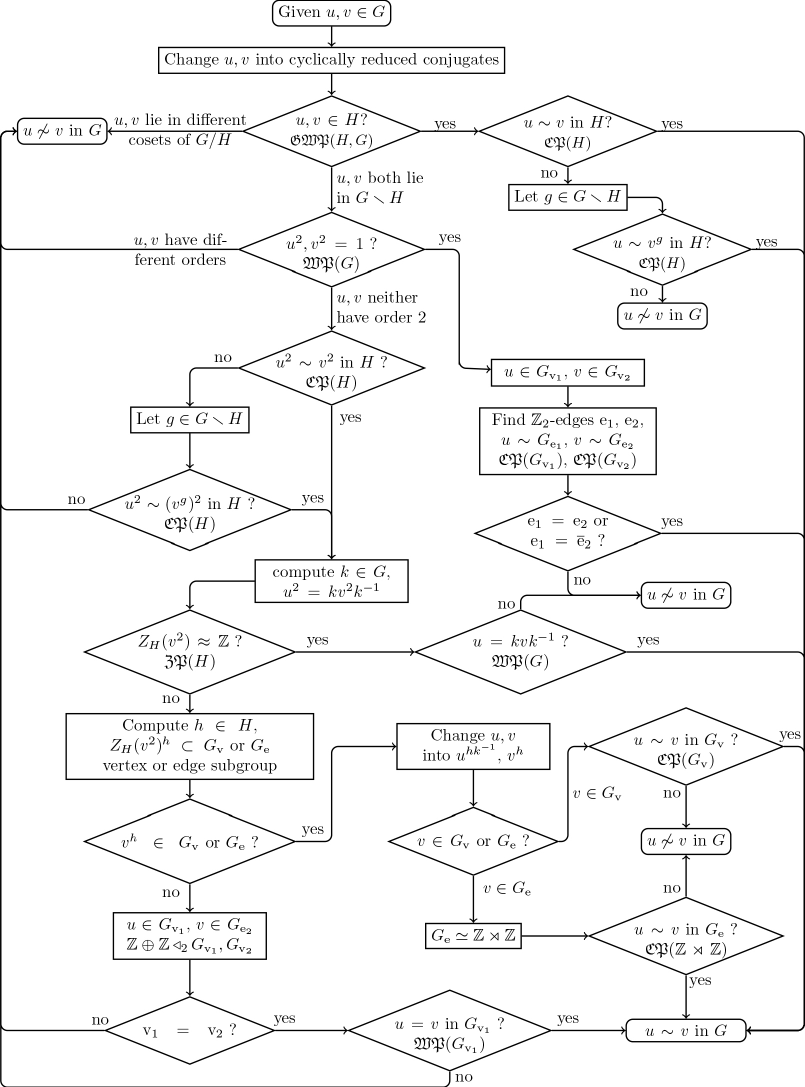}
\caption{The organigram of the algorithm}
\end{figure}

\begin{proof}
Case (2) of Lemma \ref{virtualzz} cannot occur, since $u,v$ do not lie in an edge subgroup (Lemma \ref{squareroot}), and neither can occur case (3) since $u$, $v$ have infinite order: case (1) of Lemma \ref{virtualzz} holds.

Denote by $G_{\e_1}$, $G_{\e_2}$ the respective $\ZZ$-edge subgroups of $G_{\v_1}$, $G_{\v_2}$ (cf. case (1) of Lemma \ref{virtualzz}). Since $G_{\e_i}$ is normal in $G_{\v_i}$, $i=1,2$,  by hypothesis $u$, $v$ are not conjugate in $G_{\v_i}$ to any element in $G_{\e_i}$, $i=1,2$. \smallskip\\
{\it Case (i)}. By deleting the pair of edges $\e_1, \ol{\e}_1$ in ${\rm M}$, $G$ splits as an amalgamated product $K\ast_{G_{\e_1}}G_{\v_1}$. The element $u$ lies in the right factor $G_{\v_1}$ while $v$ lies in the left factor $K$. Since $u$ is not conjugate in $G_{\v_1}$  to an element of $G_{\e_1}$, it follows from \cite[Theorem 4.6]{mks} that $u$ and $v$ are not conjugate in $G$.\smallskip\\
{\it Case (ii)}. Here, $u,v\in G_{\v_1}$ and the same argument as in (i) shows that $u$ and $v$ are conjugate in $G$ if and only if they are conjugate in $G_{\v_1}$, and since $G_{\v_1}$ is abelian, if and only if they are equal.
\end{proof}




\begin{thebibliography}{mot}

\bibitem[Ba]{bass}
H.\textsc{Bass}, {Covering theory for graphs of groups}, \emph{Journal of pure and applied algebra} {\bf 89} (1993), 3--47.

\bibitem[BBBMP]{boileau}
L. \textsc{Bessi\`eres}, G.\textsc{Besson}, M. \textsc{Boileau}, S.\textsc{Maillot}, and J.\textsc{Porti}, \emph{Geometrisation of 3-manifolds}, Tracts in Mathematics 13, European Math. Soc. 2010.


\bibitem[Bo]{bonahon}
F.\textsc{Bonahon}, {Geometric structures on 3-manifolds}, in \emph{Handbook of geometric topology}, Ed. R.J. Daverman and R.B. Sher, Elsevier (2001), 93--164.
 
\bibitem[BMMV]{freebycyclic}
O.\textsc{Bogopolski}, A.\textsc{Martino}, O.\textsc{Maslakova}, and
E.\textsc{Ventura}, {Free-by-cyclic groups have solvable
conjugacy problem}, \emph{Bulletin of the LMS} {\bf 38} (5) (2006), 787--794.

\bibitem[Br]{bridson}
M. \textsc{Bridson}, {The geometry of the word problem}, in \emph{Invitations to geometry and topology}, Oxford Graduate Texts in Mathematics {\bf 7}, Oxford University Press 2002.

\bibitem[CGKZ]{dehn-nielsen}
D.J.\textsc{Collins}, R.\textsc{Grigorchuk},
P.\textsc{Kurchanov}, and H.\textsc{Zieschang}, \emph{Combinatorial
group theory and applications to geometry}, Springer, 1998.

\bibitem[De1]{dehn1}
M.\textsc{Dehn}, {\"Uber die topologie des dreidimensional
raumes}, \emph{Math. Annalen} {\bf 69} (1910), 137-168.

\bibitem[De2]{dehn2}
M.\textsc{Dehn}, {\"Uber unendliche diskontinuerliche
gruppen}, \emph{Math. Annalen} {\bf 71} (1912), 116-144.

\bibitem[De3]{dehn3}
M.\textsc{Dehn}, {Transformation der kurven auf zweiseitigen
fl\"achen}, \emph{Math. Annalen} {\bf 72} (1912), 413-421.

\bibitem[Ep]{epsteinz2}
D.\textsc{Epstein}, {Projective planes in 3-manifolds}, \emph{Proc.
of the LMS} Ser. III {\bf 11} (1961) , 469-484.

\bibitem[CEHLPT]{epstein}
D.{\textsc{Epstein}} and \textsl{al.}, \emph{Word processings in
groups}, Jones and Bartlett, 1992.

\bibitem[CM]{collins}
D.J.\textsc{Collins} and C.F.\textsc{Miller III}, {The conjugacy
problem and subgroups of finite index}, \emph{Proc. of the LMS}
{\bf 34} (3) (1977), 535-556.

\bibitem[GeS]{gs} S.M.\textsc{Gersten} and H.\textsc{Short}, {Small cancellation theory and automatic groups: Part II}, \emph{Inventiones mathematicae} {\bf 105} (1) (1991), 641-662.

\bibitem[GoS]{goodmanshapiro} O. \textsc{Goodman} and M. \textsc{Shapiro}, {On a generalization of Dehn's algorithm}, \emph{International journal of algebra and computation} {\bf 18} (7) (2008), 1137-1177. 


\bibitem[Ha]{hatcher}
H.{\textsc{Hatcher}}, \emph{Notes on basic 3-manifold topology}, course notes available at \verb+http://www.math.cornell.edu/~hatcher+, 2007.

\bibitem[He]{hempel}
J.{\textsc{Hempel}}, \emph{3-Manifolds}, Annals of Mathematics Studies, Princeton University Press, 1976.

\bibitem[JLR]{jlr}
W.\textsc{Jaco}, D.\textsc{Letscher}, and J.\textsc{Rubinstein}, {Algorithms for essential surfaces in 3-manifolds}, \emph{Contempory Mathematics} {\bf 314} (2002), 107--124. 

\bibitem[JT]{jt}
W.\textsc{Jaco} and J.\textsc{Tollefson}, {Algorithms for the
complete decomposition of a closed 3-manifold}, \emph{Illinois J. Math.}
{\bf 39} (1995), 358-406.

\bibitem[JS]{js}
W.{\textsc{Jaco}} and P. {\textsc{Shalen}}, \emph{Seifert fibre space
in 3-manifolds}, Memoirs of the {A}merican {M}athematical
{S}ociety  {220}, 1979.

\bibitem[Jo]{johnson}
D.\textsc{Johnson}, \emph{Topics in the theory of group presentations}, LMS Lecture Note Series 42, 1980.


\bibitem[KT]{kim}
P.K.\textsc{Kim} and J.L.\textsc{Tollefson}, {PL Involutions of fibered 3-manifolds}, \emph{Trans. of the AMS} {\bf 232} (1977), 221--237.


\bibitem[LS]{luft}
 E.\textsc{Luft} and D.\textsc{Sjerve}, {Involutions with isolated fixed points}, \emph{Trans. of  the AMS} {\bf 285} (1) (1984), 305--337.


\bibitem[MKS]{mks}
W.{\textsc{Magnus}}, A.{\textsc{Karass}}, and  D.{\textsc{Solitar}},
\emph{Combinatorial group theory},
        J.Wiley \verb+&+ sons, New-York 1966.


\bibitem[NR]{nr1}
    W.\textsc{Neumann} and L.\textsc{Reeves},
    {Regular cocycles and Biautomatic structure},
    \emph{Internat. J. Alg. Comp.} {\bf 6} (1996), 313-324.




\bibitem[No]{novikov}
    P.\textsc{Novikov},
\emph{On the algorithmic unsolvability of the word problem
        in group theory},
Trudy Mat. Inst. Steklov  44, 1955.



\bibitem[Pr]{cp3mg}
J.-P.\textsc{Pr\'eaux}, {Conjugacy problem in groups of
oriented geometrisable 3-manifolds}, \emph{Topology} {\bf 45} (1)  (2006), 171-208.



\bibitem[RS]{pi_1}
S. \textsc{Rees} and L.\textsc{Soicher}, {An Algorithmic Approach to Fundamental Groups and Covers of Combinatorial Cell Complexes}, \emph{Journal of Symbolic Computation} {\bf 29} (1)  (2000), 59--77.

 \bibitem[Ro]{rotman}
 J.\textsc{Rotman}, \emph{An introduction to the Theory of
 Groups}, Graduate texts in Maths 148, Springer Verlag, 1995.

\bibitem[Ru]{rubinstein}
J.\textsc{Rubinstein}, {An algorithm to recognize the 3-sphere}. In \emph{Proceedings of  ICM, Zurich 1994}, Birkha\"user, 1 (2),  1995, 601--611.


\bibitem[Sc]{scott}
P.{\textsc{Scott}}, {The geometries of 3-manifolds}.
    \emph{Bulletin of the  LMS}
    {\bf 15} (1983),
     401--487.


\bibitem[ST]{seifth}
H.\textsc{Seifert} and W.\textsc{Threlfall}, \emph{A Textbook of Topology}, Pure and Applied Mathematics 89, New-York Academic Press, 1980.

\bibitem[Se]{serre}
J-P.{\textsc{Serre}}, \emph{Arbres, amalgames, et $SL_2$}, ast\'erisque { 46}, SMF
1977.


\bibitem[St]{stalling}
J.{\textsc{Stalling}}, {On fibering certain 3-manifolds}. in \emph{Topology of 3-manifolds}, Prentive-Hall 1962, 95--100.

\bibitem[Sw]{swarup}
G.{\textsc{Swarup}}, {Projective planes in irreducible
3-manifolds}, \emph{Mathematische Zeitschrift} {\bf 132} (4) (1973), 305-317.

\bibitem[Th]{th}
W.\textsc{Thurston}, {The geometry and topology of 3-manifolds}. Notes distributed by the Princeton
University, 1980.

\bibitem[To]{tol}
J.\textsc{Tollefson}, {Free involutions on non-prime 3-manifolds}, \emph{Osaka Journal of Math} {\bf 7} (1970), 161--164.







\end{thebibliography}
\end{document}